\newcommand{\arxiv}[1]{\href{http://arxiv.org/abs/#1}{\tt arXiv:\nolinkurl{#1}}}
\newcommand{\arXiv}[1]{\href{http://arxiv.org/abs/#1}{\tt arXiv:\nolinkurl{#1}}}
\newcommand{\googlebooks}[1]{(preview at \href{http://books.google.com/books?id=#1}{google books})}
\definecolor{dark-red}{rgb}{0.7,0.25,0.25}
\definecolor{dark-blue}{rgb}{0.15,0.15,0.55}
\definecolor{medium-blue}{rgb}{0,0,.8}
\definecolor{DarkGreen}{RGB}{0,150,0}
\definecolor{rho}{named}{red}
\theoremstyle{plain}
\newtheorem{thm}{Theorem}[section]
\newtheorem*{thm*}{Theorem}
\newtheorem{thmalpha}{Theorem}
\newtheorem{cor}[thm]{Corollary}
\newtheorem*{cor*}{Corollary}
\newtheorem*{conj*}{Conjecture}
\newtheorem{lem}[thm]{Lemma}
\newtheorem{prop}[thm]{Proposition}
\newtheorem*{quest*}{Question}
\newtheorem*{claim*}{Claim}
\theoremstyle{definition}
\newtheorem{defn}[thm]{Definition}
\newtheorem{nota}[thm]{Notation}
\newtheorem{ex}[thm]{Example}
\newtheorem{sub-ex}[thm]{Sub-Example}
\newtheorem{rem}[thm]{Remark}
\newtheorem*{rem*}{Remark}
\DeclareMathOperator{\coev}{coev}
\DeclareMathOperator{\End}{End}
\DeclareMathOperator{\ev}{ev}
\DeclareMathOperator{\Hom}{Hom}
\DeclareMathOperator{\op}{op}
\DeclareMathOperator{\ONB}{ONB}
\DeclareMathOperator{\id}{id}
\DeclareMathOperator{\Isom}{Isom}
\DeclareMathOperator{\ind}{ind}
\DeclareMathOperator{\im}{im}
\DeclareMathOperator{\Irr}{Irr}
\newcommand{\comment}[1]{}
\newcommand{\be}{\begin{enumerate}[label=(\arabic*)]}
\newcommand{\ee}{\end{enumerate}}
\newcommand{\set}[2]{\left\{#1 \middle| #2\right\}}
\newcommand{\WStar}{\bfW\text{*}}
\def\semicolon{;}
\def\applytolist#1{
    \expandafter\def\csname multi#1\endcsname##1{
        \def\multiack{##1}\ifx\multiack\semicolon
            \def\next{\relax}
        \else
            \csname #1\endcsname{##1}
            \def\next{\csname multi#1\endcsname}
        \fi
        \next}
    \csname multi#1\endcsname}
\def\calc#1{\expandafter\def\csname c#1\endcsname{{\mathcal #1}}}
\def\bbc#1{\expandafter\def\csname bb#1\endcsname{{\mathbb #1}}}
\def\bfc#1{\expandafter\def\csname bf#1\endcsname{{\mathbf #1}}}
\def\sfc#1{\expandafter\def\csname s#1\endcsname{{\sf #1}}}
\def\fc#1{\expandafter\def\csname f#1\endcsname{{\mathfrak #1}}}
\newcommand{\spbfBim}{{\sf Bim_{bf}^{sp}}}
\renewcommand{\Vec}{{\sf Vec}}
\newcommand{\Hilb}{{\sf Hilb}}
\newcommand{\noshow}[1]{}
\newcommand{\MR}[1]{}
\tikzset{vertex/.style = {shape=circle,draw,fill=black,inner sep=0pt,minimum size=5pt}}
\tikzset{edge/.style = {->,> = latex', bend right}}
\tikzset{
	super thick/.style={line width=3pt}
}
\tikzset{
    quadruple/.style args={[#1] in [#2] in [#3] in [#4]}{
        #1,preaction={preaction={preaction={draw,#4},draw,#3}, draw,#2}
    }
}
\tikzstyle{shaded}=[fill=red!10!blue!20!gray!30!white]
\tikzstyle{unshaded}=[fill=white]
\tikzstyle{empty box}=[circle, draw, thick, fill=white, opaque, inner sep=2mm]
\tikzstyle{annular}=[scale=.7, inner sep=1mm, baseline]
\tikzstyle{rectangular}=[scale=.75, inner sep=1mm, baseline=-.1cm]
\tikzstyle{mid>}=[decoration={markings, mark=at position 0.5 with {\arrow{>}}}, postaction={decorate}]
\tikzstyle{mid<}=[decoration={markings, mark=at position 0.5 with {\arrow{<}}}, postaction={decorate}]
\tikzstyle{over}=[double, draw=white, super thick, double=]
\newcommand{\roundNbox}[6]{
	\draw[rounded corners=5pt, very thick, #1] ($#2+(-#3,-#3)+(-#4,0)$) rectangle ($#2+(#3,#3)+(#5,0)$);
	\coordinate (ZZa) at ($#2+(-#4,0)$);
	\coordinate (ZZb) at ($#2+(#5,0)$);
	\node at ($1/2*(ZZa)+1/2*(ZZb)$) {#6};
}
\newcommand{\ncircle}[4]{
	\draw[very thick, #1] #2 circle (#3);
	\node at #2 {#4};
}
\begin{document}
\title{Q-systems and compact W*-algebra objects}
\author{Corey Jones and David Penneys}
\date{\today}
\maketitle
\begin{abstract}
We show that given a rigid C*-tensor category, there is an equivalence of categories between normalized irreducible Q-systems, also known as connected unitary Frobenius algebra objects, and compact connected W*-algebra objects.
Although this result could be proved as a corollary of our previous article on realizations of algebra objects and discrete subfactors, we prove it here directly via categorical methods without passing through subfactor theory.
%
\end{abstract}


\section{Introduction}

The standard invariant of a finite index subfactor ${\rm II}_1$ subfactor was first defined as a $\lambda$-lattice \cite{MR1334479}.
Since, it has been reinterpreted as a planar algebra \cite{math.QA/9909027} and a Q-system, or unitary Frobenius algebra object, in a rigid C*-tensor category \cite{MR1027496,MR1966524,MR3308880}.
Using this third language, the standard invariant of a finite index irreducible ${\rm II}_1$ subfactor is an irreducible Q-system, i.e., the underlying algebra object $A$ is \emph{connected}, meaning $\dim(\cC(1_\cC, A)) = 1$.

In \cite{1611.04620}, we introduced the notion of a W*-algebra object in a rigid C*-tensor category.
In \cite{1704.02035}, we characterized an extremal irreducible discrete inclusion of factors $(N\subseteq M, E)$ where $N$ is type ${\rm II}_1$ and $E: M \to N$ is a faithful normal conditional expectation in terms of a connected W*-algebra object $\bfA$ in (an ind-completion of) a rigid C*-tensor category $\cC$, and a fully faithful embedding $\cC \hookrightarrow \spbfBim(N)$, the spherical/extremal bifinite bimodules over $N$.
This means that we may also view the standard invariant of a finite index irreducible ${\rm II}_1$ subfactor as a \emph{compact} connected W*-algebra object, which actually lies not only in the ind-completion of $\cC$, but also in the rigid involutive tensor subcategory $\cC^{\natural}$ which is obtained from $\cC$ simply by forgetting the adjoint.
Thus passing through our theorem \cite[Thm.~A and \S7.2]{1704.02035}, we would get an equivalence between irreducible Q-systems and compact connected W*-algebra objects.

The purpose of the present article is to provide a direct proof of the equivalence between irreducible Q-systems and compact connected W*-algebra objects in an arbitrary rigid C*-tensor category without passing through subfactor theory.
In doing so, we observe that the correct notion of morphisms between Q-systems is that of involutive algebra morphism from \cite[Def.~3.4 and 3.9]{MR2794547}.
We refer the reader to Section \ref{sec:Q-systemMorphisms} below for the definition.

Our main theorem is:

\begin{thmalpha}
\label{thm:Main}
Let $\cC$ be a rigid \emph{C*}-tensor category.\footnote{In this article, we assume our rigid C*-tensor category $\cC$ has simple unit object and is idempotent complete/semi-simple.}
The assignment $\bfA \mapsto \bfL^2\bfA$ induces an equivalence of categories
\[
\left\{\,
\parbox{5.6cm}{\rm Compact connected W*-algebra objects $\bfA\in \Vec(\cC)$ with unital\,\, $*$-algebra natural transformations}\,\right\}
\,\,\cong\,\,
\left\{\,\parbox{6.5cm}{\rm Normalized irreducible Q-systems with involutive algebra morphisms}\,\right\}.
\]
\end{thmalpha}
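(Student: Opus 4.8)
The plan is to promote the assignment $\bfA \mapsto \bfL^2\bfA$ to an equivalence by building an explicit quasi-inverse and checking that both the algebraic and the $*$-structures correspond. Since a \emph{compact} W*-algebra object $\bfA$ is dualizable, it lies in $\cC^{\natural}$ and its underlying object is an honest object of $\cC$; moreover, because $\bfA$ is \emph{connected}, $\dim\cC(1_\cC,\bfA)=1$, so $\bfA$ carries a canonical normalized faithful normal trace $\tau$, and $\bfL^2\bfA$ is the associated standard-form object, again (by compactness) an object of $\cC$. First I would check that the multiplication and unit of $\bfA$ descend directly to a multiplication $m$ and unit $\eta$ on $\bfL^2\bfA$, and that the comultiplication $m^*$ and counit $\eta^*$ obtained as $\cC$-adjoints with respect to the $\bfL^2$ inner product assemble into a Q-system. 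The Frobenius identity should be exactly the trace property $\tau(xy)=\tau(yx)$, the unitarity and positivity of $m$ should come from positivity of the W*-involution, and the normalization of the Q-system is fixed by the normalization of $\tau$. Irreducibility of the resulting Q-system is then literally the connectedness hypothesis $\dim\cC(1_\cC,\bfL^2\bfA)=1$.

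Next I would construct the quasi-inverse $\bfG$. Given a normalized irreducible Q-system $(A,m,\eta)$, set $\bfG(A)$ to be the algebra object in $\Vec(\cC)$ whose underlying functor is represented by $A$, with product built from $m$ and involution built from $m^*$ together with the dagger/pivotal structure of $\cC$. The real content is to verify that $\bfG(A)$ is a \emph{W*}-algebra object in the sense of \cite{1611.04620}, not merely a $*$-algebra object: one must produce the faithful normal state, establish positivity of the involution, and check the W*/predual completeness condition. Here I expect compactness to do the work, since a compact Q-system behaves like a finite-dimensional algebra, with the canonical categorical trace (normalized so that $\eta^*\eta$ takes the prescribed value) supplying the state and positivity following from the unitary Frobenius axioms. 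One then checks $\bfG(A)$ is connected (equivalently, the Q-system is irreducible) and exhibits natural isomorphisms $\bfL^2\bfG(A)\cong A$ and $\bfG(\bfL^2\bfA)\cong\bfA$; unwinding the $\bfL^2$-construction on a representable functor, both should be essentially the identity.

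For full faithfulness I would match the two notions of morphism. A unital $*$-algebra natural transformation $\bfA\to\bfB$ is, by representability, the same datum as a single morphism $f\colon\bfL^2\bfA\to\bfL^2\bfB$ in $\cC$; unitality and multiplicativity of the transformation translate into $f$ being a unital algebra map, and the $*$-compatibility translates precisely into the condition defining an involutive algebra morphism in \cite[Def.~3.4 and 3.9]{MR2794547} (which also pins down the interaction with the comultiplications $m^*$). Conversely, any involutive algebra morphism of Q-systems extends by naturality to such a natural transformation, and checking that these two assignments are mutually inverse bijections on hom-sets is a routine diagram chase once the conventions are aligned.

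The step I expect to be the main obstacle is the reverse construction $\bfG$: verifying that the unitary Frobenius axioms of a Q-system genuinely yield a W*-algebra object --- in particular the positivity of the canonical state and the W*/predual structure --- and confirming that the normalization and the involutive-morphism condition line up \emph{exactly} with the conventions of \cite{1611.04620} and \cite{MR2794547}, rather than being off by a scalar or an adjoint. Getting these compatibilities right, as opposed to the formal category-theoretic bookkeeping, is where the essential analytic and diagrammatic care is needed.
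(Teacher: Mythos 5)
Your skeleton is recognizably the paper's: your quasi-inverse $\bfG$ is the paper's functor $\WStar$ of Section \ref{sec:QtoW} (representable functor with involution built from $\sigma_A$), and your Yoneda matching of unital $*$-algebra natural transformations with involutive algebra morphisms is exactly how the paper handles morphisms. But there are two genuine gaps, both at the points you yourself flag as the hard steps. First, in the direction $\bfA \mapsto \bfL^2\bfA$ you assert that the multiplication descends to $\bfL^2\bfA$ and that ``the Frobenius identity should be exactly the trace property $\tau(xy)=\tau(yx)$.'' That identification is wrong: the Frobenius relation encodes associativity together with nondegeneracy/self-duality of the pairing $i^*\circ m$ (Lemma \ref{lem:RotationRelation}) and holds for the GNS construction of \emph{any} faithful state on a finite-dimensional C*-algebra; traciality is instead what makes $R=S=i^*\circ m$ \emph{standard}, i.e.\ what gives unitarity of $\sigma_A$ in Lemma \ref{lem:UnitaryInvolution}. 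More importantly, the actual content --- that the $L^2$-adjoint $m^*$ satisfies the Frobenius and separability axioms --- is never verified in your sketch, and the paper deliberately does not attempt this directly: it embeds $\bfL^2\bfA$ into the inner-hom Q-system $H\otimes\overline{H}$ of Example \ref{ex:InnerHomQSystem} and cuts down by the canonical projector $p$ built from orthonormal bases of the spaces $\bfA(a)$, verifying the axioms through the tetrahedral $6j$-symbol identities of Section \ref{sec:WtoQ} (Proposition \ref{prop:W*ToFrobenius}); this is the categorical analogue of realizing a finite-dimensional von Neumann algebra as a $*$-subalgebra of $B(L^2)$. Your direct route is probably repairable --- nondegeneracy of the pairing from faithfulness of the GNS state, rotational invariance of $m^*$ from traciality (\cite[Prop.~2.6]{1704.02035}), then the paper's corollary that a \emph{connected} C* Frobenius algebra is automatically an irreducible Q-system --- but none of these steps appears in your proposal, and the one justification you do offer conflates two distinct axioms.

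Second, for the W*-property of $\bfG(A)$ you correctly identify the obstacle but offer only ``I expect compactness to do the work'' with positivity ``following from the unitary Frobenius axioms.'' Positivity of a state does not by itself yield the W*-condition of \cite{1611.04620}, which is a property of the module category $\cM_\bfA$. The mechanism the paper uses (Proposition \ref{prop:FrobeniusToW*}) is specific and you would need it or an equivalent: by the Frobenius relation, the $\cC$-adjoint of a right $A$-module map is again a module map, so the category $\cM_A$ of free modules is a dagger subcategory of $\cC$; each $\cM_A(c\otimes A, c\otimes A)$ is then a unital $*$-subalgebra of the finite-dimensional von Neumann algebra $\End_\cC(c\otimes A)$, hence von Neumann by the bicommutant theorem, and Roberts' $2\times 2$ trick together with \cite[Thm.~3.20 and 3.24]{1611.04620} transfers this to the algebra object. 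On the positive side, your observation that, once the object-level constructions are in place, full faithfulness and the unit isomorphism $\bfG(\bfL^2\bfA)\cong\bfA$ become nearly tautological by Yoneda is correct in spirit, and a completed version of your plan would shorten the paper's naturality verifications, which currently require the partial isometries $Q(\theta)$ and Lemma \ref{lem:RemoveTheta}.
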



The proof of Theorem \ref{thm:Main} proceeds as follows.
In Section \ref{sec:QtoW}, starting with an irreducible Q-system $A\in \cC$, we define a compact connected W*-algebra object $\bfA\in \Vec(\cC)$ by $\bfA(a) = \cC^{\natural}(a, A)$ for all $a\in \cC$.
The $*$-structure is the conjugation on $\cC^{\natural}$ as $A$ is a real (symmetrically self-dual) object by \cite[\S6]{MR1444286}.
In Section \ref{sec:WtoQ}, starting with $\bfA$, we embed the compact Hilbert space object $\bfL^2\bfA\in \Hilb(\cC)$ into the (non-irreducible) Q-system $\bfL^2\bfA \otimes \overline{\bfL^2\bfA}$, which gives a canonical projector $p \in \End_\cC(H\otimes \overline{H})$ where $H\in \cC$ is such that $\bfL^2\bfA(a) = \cC(a, H)$ for all $a\in \cC$.
This is similar to the strategy of \cite[Lem.~3.21]{MR2794547}, where it is shown that an involutive subalgebra of a unitary Frobenius algebra is again Frobenius.
Conceptually, this is analogous to the fact that a unital $*$-subalgebra of a finite dimensional von Neumann algebra is again a von Neumann algebra.
We then prove the equivalence of categories in Sections \ref{sec:WtoQtoW} and \ref{sec:QtoWtoQ}.

While our proof is conceptual from an operator algebraic perspective, and focuses completely on the algebra objects, it is important to note that there is a second conceptual approach that passes through proper module categories.
Taking this approach, this result should be viewed as an extension of \cite[Appendix A]{1704.04729}.
One could expand the proof of \cite[Thm.~A.1]{1704.04729} to prove an equivalence between the category of irreducible Q-systems and the category of proper cyclic $\cC$-module W*-categories with simple basepoint.
One would then apply our equivalence between W*-algebra objects and cyclic $\cC$-module W*-categories \cite[Thm.~3.24]{1611.04620} to get another proof of Theorem \ref{thm:Main}.

\paragraph{Acknowledgements}

The authors would like to thank Marcel Bischoff, Andr\'{e} Henriques, and Jamie Vicary for helpful discussions.
Corey Jones was supported by Discovery Projects `Subfactors and symmetries' DP140100732 and `Low dimensional categories' DP160103479 from the Australian Research Council.
David Penneys was supported by NSF grants DMS-1500387/1655912 and 1654159.

The authors would like to thank the Isaac Newton Institute for Mathematical Sciences, Cambridge, for support and hospitality during the programme Operator Algebras: Subfactors and their Applications where work on this paper was undertaken.
This work was supported by EPSRC grant no EP/K032208/1.
\section{Background}

We refer the reader to \cite[\S2.1-2.2]{MR3663592} and \cite[\S2.1-2.3]{1611.04620} for background on rigid C*-tensor categories and their graphical calculus.
Of particular importance is the \emph{bi-involutive structure} consisting of two commuting involutions called the adjoint $*$ and the conjugate $\overline{\,\cdot\,}$.
Applying both to morphisms in $\cC$ gives the contravariant dual functor $(\cdot)^\vee: \cC \to \cC$:
\begin{equation}
\label{eq:BarStar}
\cC(a,b)\ni\psi \mapsto 
\psi^\vee := \overline{\psi}^* = \overline{\psi^*}=
\begin{tikzpicture}[baseline=-.1cm]
	\draw (0,.2) arc (0:180:.2cm) -- (-.4,-.5);
	\draw (0,-.2) arc (-180:0:.2cm) -- (.4,.5);
	\roundNbox{unshaded}{(0,0)}{.25}{0}{0}{$\psi$}
	\node at (-.6,-.4) {\scriptsize{$\overline{b}$}};
	\node at (.6,.4) {\scriptsize{$\overline{a}$}};
	\node at (-.15,-.4) {\scriptsize{$a$}};
	\node at (.15,.4) {\scriptsize{$b$}};
\end{tikzpicture}
\in \cC(\overline{b}, \overline{a}).
\end{equation}
The equation \eqref{eq:BarStar} implies that we actually have \emph{three} commuting involutions on morphisms: $(\cdot)^*, \overline{\,\cdot\,}, (\cdot)^\vee$.
Here, $(\cdot)^*, \overline{\,\cdot\,}$ are anti-linear and $(\cdot)^\vee$ is linear, and combining any two in either order gives the third.
As usual, we suppress all associators, unitors, and the conjugation structure maps.

\subsection{Tensor categories associated to \texorpdfstring{$\cC$}{C}}

Fix a rigid C*-tensor category $\cC$, and let $\Irr(\cC)$ be a set of representatives for the isomorphism classes of simple objects of $\cC$.

\begin{defn}[{\cite[\S2]{MR1444286}}]
\label{defn:StandardSolutions}
We say that morphisms $R_c: 1_\cC \to \overline{c}\otimes c$ and $S_c: 1_\cC \to c\otimes \overline{c}$ are \emph{solutions to the conjugate equations} if they satisfy the zig-zag axioms.
They are called \emph{standard solutions} if they additionally satisfy the balancing condition
$$
R_c^* \circ (\id_{\overline{c}} \otimes f) \circ R_c 
=
S_c^* \circ (f\otimes \id_{\overline{c}} ) \circ S_c 
\qquad\qquad
\text{for all $f\in \End_\cC(c)$.}
$$
\end{defn}

\begin{defn}
We rapidly recall the tensor categories associated to $\cC$ which arise in the study of $*$-algebra objects.
We refer the reader to \cite{1611.04620,1704.02035} for more details on these categories.
\begin{itemize}
\item
$\cC^{\natural}$ is the rigid involutive tensor category obtained from $\cC$ by forgetting the adjoint $*$.
\item
$\Vec(\cC)$ is the tensor category of linear functors $(\cC^{\natural})^{\op} \to \Vec$ with linear natural transformations and the Day convolution tensor product.
Note that $\Vec(\cC)$ is equivalent to $\ind(\cC^{\natural})$.
\item
$\Hilb(\cC)$ is the W*-tensor category of linear dagger tensor functors $\cC^{\op} \to \Hilb$ with bounded linear natural transformations and the Day convolution tensor product.
Note that $\Hilb(\cC)$ is equivalent to the Neshveyev-Yamashita unitary $\ind$ category of $\cC$ \cite{MR3509018}.
\end{itemize}
\end{defn}

Recall that the Yoneda embedding $a \mapsto \mathbf{a}:=\cC^{\natural}(\cdot , a)$ gives an embedding of involutive tensor categories $\cC^{\natural} \hookrightarrow \Vec(\cC)$ and similalry, we have an embedding of bi-involutive tensor categories $\cC\hookrightarrow \Hilb(\cC)$.

An object $\bfF\in \Vec(\cC)$ is called \emph{compact} if $\bfF$ is in the image of the Yoneda embedding, and similarly for $\bfH \in \Hilb(\cC)$.
This means that for $\bfF$ compact, there is a corresponding $F\in \cC^{\natural}$ such that $\bfF(a) = \cC^{\natural}(a, F)$ for all $a\in \cC$.
Likewise, we may identify compact objects of $\Hilb(\cC)$ with objects in $\cC$.

\begin{nota}
Typically, we use bold letters for functors, like $\bfF\in \Vec(\cC)$ and $\bfH \in \Hilb(\cC)$, and we will use regular letters for objects in $\cC^{\natural}$ and $\cC$.
\end{nota}

\subsection{C* Frobenius algebra objects}

We now introduce C* Frobenius algebra objects in rigid C*-tensor categories following \cite[\S3.1]{MR3308880}.
Of particular importance will be Q-systems, which are unitary Frobenius algebra objects discussed in Section \ref{sec:Q-systemMorphisms}. 
Some other general references on Frobenius algebras (and Q-systems) in rigid C*-tensor categories include 
\cite{MR1027496,MR1444286,MR1966524,MR2794547}.

\begin{defn}
An \emph{algebra object} in a tensor category $\cT$ is a triple $(A, m,i)$ where $A\in \cT$ and $m\in \cT( A\otimes A , A)$ and $i\in \cT(1_\cT , A)$ are morphisms satisfying the following relations:
\begin{itemize}
\item
(associativity)
$m\circ (m \otimes \id_A) = m\circ (\id_A \otimes m)$.
\item
(unitality)
$m\circ (i\otimes \id_A) = \id_A = m\circ (\id_A\otimes i)$.
\end{itemize}
A coalgebra object is defined similarly by reversing the arrows.

An algebra object is called \emph{connected} if $\dim(\cT(1_\cT, A)) = 1$.
\end{defn}

We make heavy use of the graphical calculus for morphisms in tensor categories, in which $m$ is denoted by a trivalent vertex, and $i$ is denoted by a univalent vertex. 
We now specialize to algebras in a rigid C*-tensor category $\cC$.

\begin{defn}
\label{defn:QSystem}
A C* \emph{Frobenius algebra object} is an algebra object $(A, m,i) \in \cC$ 
(which automatically implies $(A, m^*, i^*)$ is a coalgebra object)
such that the following condition holds:
\begin{itemize}
\item (Frobenius) 
$m^*\circ m = (\id_A \otimes m )\circ(m^*\otimes \id_A) = (m\otimes \id_A)\circ (\id_A \otimes m^*)$.
\end{itemize}
A C* Frobenius algebra is called \emph{separable} \cite[Def.~3.13]{MR1966524} or \emph{special} \cite[Def.~3.2]{MR3308880} if $m\circ m^* = \lambda \id_A$ for some constant $\lambda\in \bbC$. 
In this case, since $\cC$ is C*, we automatically have $\lambda>0$.
Since $1_\cC$ is simple and $\cC$ is C*, we automatically have $i^*\circ i = \lambda' \id_{1_\cC}$ for some non-zero constant $\lambda'>0$.
We call a C* Frobenius algebra \emph{normalized} if $\lambda'=1$.
\end{defn}

\begin{ex}
\label{ex:InnerHom}
Suppose $c\in \cC$, and consider $c\otimes \overline{c}$ with multiplication $m = \id_c \otimes \ev_c \otimes \id_{\overline{c}}$ and unit $i= \coev_c$.
Then $(c\otimes \overline{c}, m,i)$ is a separable C* Frobenius algebra, connected if and only if $c$ is simple.
Moreover, it is well known that $c\otimes \overline{c}$ is a real/symmetrically self-dual object.
\end{ex}

\begin{lem}
\label{lem:RotationRelation}
Suppose $(A,m,i)$ is an algebra object in $\cC$.
The Frobenius condition above is equivalent to the following two conditions:
\begin{itemize}
\item (Self-duality) 
There exist maps $\tilde{\ev}_A: A\otimes A \to 1_\cC$ and $\tilde{\coev}_A: 1_\cC \to A\otimes A$ which satisfy the zig-zag axioms, and
\item (Rotational invariance)
$m^* = (\id_A \otimes m)\circ (\tilde{\coev}_A\otimes \id_A) = (m\otimes \id_A)\circ (\id_A\otimes \tilde{\coev}_A)$.
\end{itemize}
\end{lem}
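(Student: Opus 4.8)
The plan is to prove the two implications separately by direct computation in the graphical calculus, using only associativity and unitality of $(A,m,i)$ together with the Frobenius relations; no deeper structure is needed.

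For the forward direction (Frobenius $\Rightarrow$ self-duality and rotational invariance), I would take as candidate duality morphisms
\[
\tilde{\coev}_A := m^* \circ i \colon 1_\cC \to A\otimes A
\qquad\text{and}\qquad
\tilde{\ev}_A := i^* \circ m \colon A\otimes A \to 1_\cC .
\]
To verify the first zig-zag axiom I would expand $(\id_A \otimes \tilde{\ev}_A)\circ(\tilde{\coev}_A \otimes \id_A)$, recognize the resulting middle composite $(\id_A \otimes m)\circ(m^* \otimes \id_A)$ as $m^* \circ m$ by the first Frobenius relation, cancel $m\circ(i\otimes \id_A)=\id_A$ by unitality, and finally observe that $(\id_A \otimes i^*)\circ m^* = \bigl(m\circ(\id_A \otimes i)\bigr)^* = \id_A$, again by unitality. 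The second zig-zag axiom is symmetric, using the second Frobenius relation and the other unit law. Rotational invariance then falls out of the same substitution: $(\id_A \otimes m)\circ(\tilde{\coev}_A \otimes \id_A) = (m^* \circ m)\circ(i\otimes \id_A) = m^*\circ\bigl(m\circ(i\otimes \id_A)\bigr) = m^*$, and the second form is obtained symmetrically.

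For the backward direction (self-duality and rotational invariance $\Rightarrow$ Frobenius), the key point I would exploit is that rotational invariance exhibits $m^*$ as a rotation of $m$, so that the Frobenius relations collapse to associativity. Substituting $m^* = (\id_A \otimes m)\circ(\tilde{\coev}_A \otimes \id_A)$ into both sides of the first Frobenius relation, the right-hand side becomes
\[
(\id_A \otimes m)\circ(m^* \otimes \id_A)
= (\id_A \otimes m)\circ(\id_A \otimes m \otimes \id_A)\circ(\tilde{\coev}_A \otimes \id_A \otimes \id_A),
\]
while $m^*\circ m$ becomes $(\id_A \otimes m)\circ(\tilde{\coev}_A \otimes \id_A)\circ m = (\id_A \otimes m)\circ(\tilde{\coev}_A \otimes m)$; associativity of $m$ identifies the two. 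The remaining Frobenius relation follows identically from the second form of rotational invariance.

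I do not expect a genuine obstacle: the whole argument is a sequence of string manipulations. The only step requiring real care is the bookkeeping in the forward direction—correctly pairing each of the two Frobenius relations with the appropriate left/right unit law, and in particular identifying $(\id_A \otimes i^*)\circ m^*$ as the adjoint of the unit law $m\circ(\id_A \otimes i)=\id_A$—and, in the backward direction, keeping the two forms of rotational invariance matched with the two orientations of associativity.
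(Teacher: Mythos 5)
Your proposal is correct and follows essentially the same route as the paper's proof: the paper likewise sets $\tilde{\ev}_A = i^*\circ m$ and $\tilde{\coev}_A = m^*\circ i$ and uses the Frobenius relation (with unitality) for the forward direction, and for the converse trades $m^*$ for $m$ via the rotation relation and invokes associativity. Your write-up simply makes explicit the zig-zag verifications and the pairing of each Frobenius relation with the corresponding unit law, which the paper leaves to the reader.
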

\begin{proof}
One passes from the Frobenius condition to the above two conditions by setting $\tilde{\ev}_A = i^*\circ m$ and $\tilde{\coev}_A=m^*\circ i$ and using the Frobenius relation.
One passes from the two above conditions to the Frobenius condition by trading $m^*$ for $m$ using the rotation relation and using associativity.
\end{proof}

\subsection{Unitary Frobenius algebras/Q-systems and involutive morphisms}
\label{sec:Q-systemMorphisms}

\begin{defn}
Given two algebra objects $(A,m_A,i_A), (B,m_B,i_B)$ in a tensor category $\cT$, an morphism $\theta\in \cT(A,B)$ is called an \emph{algebra morphism} if $m_B\circ (\theta \otimes \theta) = \theta \circ m_A$ and $i_B \circ \theta = i_A$.
\end{defn}

Given a C* Frobenius algebra $A\in\cC$, following \cite[Def.~3.4]{MR2794547},\footnote{To translate between our conventions for bi-involutive categories and those in \cite{MR2794547}, our $(\cdot)^*$ is his $(\cdot)^\dagger$, our $\overline{\,\cdot\,}$ is his $(\cdot)_*$, and our $(\cdot)^\vee$ is his $(\cdot)^*$.} we can define two canonical isomorphisms $\sigma_A^L, \sigma^R_A \in \cC( A, \overline{A})$ given by
$$
\sigma_A^L = 
\begin{tikzpicture}[baseline=-.1cm]
	\draw (-.3,.3) -- (-.3,.6);
	\filldraw (-.3,.6) circle (.05cm) node [left] {\scriptsize{$i_A^*$}};
	\filldraw (-.3,.3) circle (.05cm) node [below] {\scriptsize{$m_A$}};
	\draw (-.6,-.6) -- (-.6,0) arc (180:0:.3cm)  arc (-180:0:.3cm) -- (.6,.6);
	\node at (-.8,-.4) {\scriptsize{$A$}};
	\node at (.8,.4) {\scriptsize{$\overline{A}$}};
\end{tikzpicture}
\qquad\qquad
\sigma_A^R 
=
\begin{tikzpicture}[baseline=-.1cm, xscale = -1]
	\draw (-.3,.3) -- (-.3,.6);
	\filldraw (-.3,.6) circle (.05cm) node [left] {\scriptsize{$i_A^*$}};
	\filldraw (-.3,.3) circle (.05cm) node [below] {\scriptsize{$m_A$}};
	\draw (-.6,-.6) -- (-.6,0) arc (180:0:.3cm)  arc (-180:0:.3cm) -- (.6,.6);
	\node at (-.8,-.4) {\scriptsize{$A$}};
	\node at (.8,.4) {\scriptsize{$\overline{A}$}};
\end{tikzpicture}.
$$
Notice that the cup in the diagram on the left is $\coev_A$, and the cup in the diagram on the right is $\ev_A^*$.
By \cite[Lem.~3.5(13)]{MR2794547}, $\overline{\sigma_A^L}\circ \sigma_A^L = \varphi_A$ and $\overline{\sigma_A^R}\circ \sigma_A^R = \varphi_A$, and thus both are involutions.
By \cite[Lem.~3.11]{MR2794547}, $\sigma_A^L = \sigma_A^R$ if and only if either $\sigma_A^L$ or $\sigma_A^R$ is unitary.

\begin{lem}
\label{lem:UnitaryInvolution}
The following are equivalent for a separable \emph{C*} Frobenius algebra object $(A,m,i)\in\cC$.
\begin{enumerate}[(1)]
\item
$R=S=i_A^*\circ m_A$ are standard solutions to the conjugate equations.
\item
$R^*\circ R = i^*\circ m \circ m^*\circ i = d_A \id_{1_\cC}$
\item
$\sigma^L_A$ is unitary, and thus $\sigma_A^R=\sigma_A^L=:\sigma_A$ by \cite[Lem.~3.11]{MR2794547}.
\end{enumerate}
\end{lem}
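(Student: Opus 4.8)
The three conditions translate between the self-duality coming from the Frobenius structure, a scalar dimension identity, and unitarity of the canonical map $\sigma_A^L$. The plan is to prove $(3)\Rightarrow(1)\Rightarrow(2)\Rightarrow(3)$, ending on the implication where the C*-structure is genuinely used. First I would record the relevant solutions to the conjugate equations. By Lemma \ref{lem:RotationRelation}, $\tilde{\ev}_A=i^*\circ m$ and $\tilde{\coev}_A=m^*\circ i$ exhibit $A$ as a self-dual object; bending one leg by the canonical isomorphisms gives candidate solutions $R=(\sigma_A^L\otimes\id_A)\circ m^*\circ i\colon 1_\cC\to\overline A\otimes A$ and $S=(\id_A\otimes\sigma_A^R)\circ m^*\circ i\colon 1_\cC\to A\otimes\overline A$. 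Substituting these into the balancing condition of Definition \ref{defn:StandardSolutions} turns it into a comparison of $i^*\circ m\circ(\sigma_A^{L*}\sigma_A^L\otimes f)\circ m^*\circ i$ with $i^*\circ m\circ(f\otimes\sigma_A^{R*}\sigma_A^R)\circ m^*\circ i$ for $f\in\End_\cC(A)$; in particular the equality $R=S$ is exactly $\sigma_A^L=\sigma_A^R$.

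For $(3)\Rightarrow(1)$ I would assume $\sigma_A^L=\sigma_A^R=:\sigma_A$ is unitary, so the twists $\sigma_A^{L*}\sigma_A^L$ and $\sigma_A^{R*}\sigma_A^R$ are both $\id_A$ and $R=S$. The balancing condition then reduces to $i^*\circ m\circ(\id_A\otimes f)\circ m^*\circ i=i^*\circ m\circ(f\otimes\id_A)\circ m^*\circ i$ for all $f$, i.e.\ the left and right traces of the Frobenius form agree. I would deduce this symmetry from $\overline{\sigma_A}\circ\sigma_A=\varphi_A$ (\cite[Lem.~3.5(13)]{MR2794547}) together with the rotation relation of Lemma \ref{lem:RotationRelation}: conjugating a strand by the unitary $\sigma_A$ is precisely what exchanges the two multiplication pictures. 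Hence $(R,S)$ is standard. For $(1)\Rightarrow(2)$, once $(R,S)$ is standard its dimension is intrinsic, so $R^*\circ R=d_A\id_{1_\cC}$; with $\sigma_A$ unitary the twist again drops out and $R^*\circ R=i^*\circ m\circ m^*\circ i$, which is condition (2).

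The implication $(2)\Rightarrow(3)$ is the main obstacle, and it is where positivity enters. I would set $h:=\sigma_A^{L*}\sigma_A^L\in\End_\cC(A)$, a positive invertible endomorphism equal to $\id_A$ exactly when $\sigma_A^L$ is unitary. The scalar $i^*\circ m\circ m^*\circ i$ of condition (2) should be identified, via the defining diagrams of $\sigma_A^L$ and the standard solutions, with the standard categorical trace of a quantity built from $h$; separability $m\circ m^*=\lambda\id_A$ and $i^*\circ i=\lambda'\id_{1_\cC}$ reduce this scalar to $\lambda\lambda'$. The key analytic input is that among all separable C* Frobenius structures on $A$ this trace is minimized precisely by the standard solutions, so that condition (2) forces extremality; positivity of $h$ together with a Cauchy--Schwarz estimate for the standard trace then forces $h=\id_A$. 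Thus $\sigma_A^L$ is an isometry, hence unitary, and \cite[Lem.~3.11]{MR2794547} yields $\sigma_A^R=\sigma_A^L$, completing the cycle.

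Pinning down the exact trace identity for $h$ and its equality case is the technical heart of the argument, and it is the one step that genuinely needs the C*-structure rather than formal manipulation; everything else is a rewriting using the rotation relation of Lemma \ref{lem:RotationRelation} and the cited properties of $\sigma_A^L$ and $\sigma_A^R$.
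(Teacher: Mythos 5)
Your cycle breaks at its only load-bearing step, $(2)\Rightarrow(3)$. Everything after ``the key analytic input'' is an assertion, not an argument: the claim that the scalar $i^*\circ m\circ m^*\circ i=\lambda\lambda'$ is minimized precisely at the standard solutions, with the equality case forcing $h=\sigma_A^{L*}\sigma_A^L=\id_A$, is exactly the Longo--Roberts extremality theorem (equivalently, the minimal-index characterization that any solutions $(R,S)$ of the conjugate equations satisfy $\|R\|\,\|S\|\geq d_A$, with equality, when $R=S$, exactly at standard solutions). That is the one genuinely hard ingredient of this lemma, and the paper does not reprove it either: it closes its cycle by quoting \cite[\S6]{MR1444286} for $(2)\Rightarrow(1)$, with a footnote warning that the quoted proof passes through subfactor theory (a subfactor-free proof being \cite[Thm.~2.9]{1704.04729}). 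Your ``Cauchy--Schwarz estimate for the standard trace'' is never formulated --- no trace identity for $h$ is written down --- and your closing sentence concedes that the identity and its equality case are open. As it stands, the proposal proves the two easy implications and postpones the hard one; you must either cite \cite[\S6]{MR1444286} or \cite[Thm.~2.9]{1704.04729} at this point, as the paper effectively does, or actually carry out the minimization argument, which is a substantial piece of work and not a routine positivity estimate.

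The rest is essentially sound and routes differently from the paper in a reasonable way. Your $(1)\Rightarrow(2)$ coincides with the paper's (standard solutions satisfy $R^*\circ R=d_A\id_{1_\cC}$). For the third corner the paper goes the opposite direction: it proves $(1)\Rightarrow(3)$ by noting that if $R=S=i_A^*\circ m_A$ is standard then so are $\ev_A^*$ and $\coev_A$, whence $\sigma_A^L=(R\otimes\id_A)\circ(\id_A\otimes\coev_A)$ is unitary by uniqueness of standard solutions up to unitary \cite[Prop.~2.2.13]{MR3204665}, and then gets $(3)\Rightarrow(2)$ by the direct diagrammatic computation identifying $R^*\circ R$ with the closed loop on $\sigma_A\circ\sigma_A^*$, which unitarity collapses to $d_A\id_{1_\cC}$. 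Your $(3)\Rightarrow(1)$ via symmetry of the Frobenius trace (the balancing condition of Definition \ref{defn:StandardSolutions} with $R=S$) is a legitimate alternative, but you should actually verify that $\sigma_A^L=\sigma_A^R$ implies $i^*\circ m\circ(f\otimes\id_A)\circ m^*\circ i=i^*\circ m\circ(\id_A\otimes f)\circ m^*\circ i$ rather than gesturing at the rotation relation; also note the typing slip that your $R\in\cC(1_\cC,\overline{A}\otimes A)$ and $S\in\cC(1_\cC,A\otimes\overline{A})$ lie in different hom spaces, so ``$R=S$ is exactly $\sigma_A^L=\sigma_A^R$'' only makes sense after inserting the canonical identifications. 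None of this, however, repairs the gap at $(2)\Rightarrow(3)$.
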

\begin{proof}
(1) implies (2) is immediate, since (2) is satisfied by standard solutions.
(2) implies (1) follows from \cite[\S6]{MR1444286}.\footnote{We warn the reader that the proof in \cite[\S6]{MR1444286} for a separable C* Frobenius algebra involves constructing a subfactor, and thus the Jones index is the minimal index \cite{MR1027496}.}
To show (1) implies (3), suppose $R=S= i_A^*\circ m_A$ are standard solutions to the conjugate equations.
Then so are $R'=\ev_A^*$ and $S' = \coev_A$, and we must have $\sigma_A^L = (R\otimes \id_A)\circ( \id_A\otimes \coev_A)$ is unitary by \cite[Prop.~2.2.13]{MR3204665}.
Finally, to show (3) implies (2), if $\sigma^L_A$ is unitary, then it is easy to calculate that for $R=S= i_A^*\circ m_A$, \begin{equation*}
R^*\circ R
=
\begin{tikzpicture}[baseline=-.1cm]
	\draw (-.3,.6) -- (-.3,.9);
	\draw (-.3,-.6) -- (-.3,-.9);
	\filldraw (-.3,.9) circle (.05cm) node [left] {\scriptsize{$i_A^*$}};
	\filldraw (-.3,.6) circle (.05cm) node [below] {\scriptsize{$m_A$}};
	\filldraw (-.3,-.9) circle (.05cm) node [left] {\scriptsize{$i_A$}};
	\filldraw (-.3,-.6) circle (.05cm) node [above] {\scriptsize{$m_A^*$}};
	\draw (-.6,-.3) -- (-.6,.3) arc (180:0:.3cm) -- (0,-.3);
	\draw (-.6,-.3) arc (-180:0:.3cm);
\end{tikzpicture}
=
\begin{tikzpicture}[baseline=-.1cm]
	\draw (-.3,.6) -- (-.3,.9);
	\draw (-.3,-.6) -- (-.3,-.9);
	\filldraw (-.3,.9) circle (.05cm) node [left] {\scriptsize{$i_A^*$}};
	\filldraw (-.3,.6) circle (.05cm) node [below] {\scriptsize{$m_A$}};
	\filldraw (-.3,-.9) circle (.05cm) node [left] {\scriptsize{$i_A$}};
	\filldraw (-.3,-.6) circle (.05cm) node [above] {\scriptsize{$m_A^*$}};
	\draw (-.6,-.3) -- (-.6,.3) arc (180:0:.3cm)  arc (-180:0:.2cm) -- (.4,.9) arc (180:0:.2cm) -- (.8,-.9);
	\draw (-.6,-.3) arc (-180:0:.3cm) arc (180:0:.2cm) -- (.4,-.9) arc (-180:0:.2cm);
	\node at (.2,.9) {\scriptsize{$\overline{A}$}};
	\node at (.2,-.9) {\scriptsize{$\overline{A}$}};
	\node at (1,0) {\scriptsize{$A$}};
\end{tikzpicture}
=
\begin{tikzpicture}[baseline=-.1cm]
	\draw (0,.8) arc (180:0:.3cm) -- (.6,-.8) arc (0:-180:.3cm) -- (0,.8);
	\roundNbox{unshaded}{(0,.5)}{.3}{0}{0}{$\sigma_A$}
	\roundNbox{unshaded}{(0,-.5)}{.3}{0}{0}{$\sigma_A^*$}
	\node at (-.2,1) {\scriptsize{$\overline{A}$}};
	\node at (-.2,-1) {\scriptsize{$\overline{A}$}};
	\node at (.8,0) {\scriptsize{$A$}};
	\node at (-.2,0) {\scriptsize{$A$}};
\end{tikzpicture}
\underset{(3)}{=}
\begin{tikzpicture}[baseline=-.1cm]
	\draw (0,0) circle (.3cm);
	\node at (.5,0) {\scriptsize{$A$}};
\end{tikzpicture}
=
d_A \id_{1_\cC}.
\qedhere
\end{equation*}
\end{proof}

\begin{defn}
A \emph{unitary Frobenius algebra} or a \emph{Q-system} is a separable C* Frobenius algebra such that the equivalent conditions in Lemma \ref{lem:UnitaryInvolution} hold \cite[Def.~3.2 and 3.8]{MR3308880}.
A Q-system is called \emph{irreducible} if the underlying algebra object $A\in \cC$ is connected.
\end{defn}

\begin{rem}
\label{rem:RealObject}
By condition (1) in Lemma \ref{lem:UnitaryInvolution}, the underlying object $A\in \cC$ of a Q-system is \emph{real} or \emph{symmetrically self-dual} \cite[Rem.~5.6(3)]{MR1966524}.
Note that a Q-system is exactly a unitary $\dagger$-Frobenius monoid in the sense of \cite[Def.~3.9]{MR2794547}.
\end{rem}

\begin{cor}
A connected \emph{C*} Frobenius algebra is automatically an irreducible Q-system.
\end{cor}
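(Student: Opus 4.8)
The plan is to check, for the given connected C* Frobenius algebra $(A,m,i)$, the two defining properties of a Q-system: separability ($m\circ m^*=\lambda\id_A$ with $\lambda>0$) and one of the equivalent conditions of Lemma \ref{lem:UnitaryInvolution}. Since \emph{irreducible} means exactly that $A$ is connected, which is the hypothesis, once the Q-system structure is in place there is nothing further to do. The only way I will use connectedness is in the form $\cC(1_\cC,A)=\C\cdot i$: every morphism $1_\cC\to A$ is a scalar multiple of the unit.

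First I would prove separability. Put $h:=m\circ m^*\in\End_\cC(A)$. Using the Frobenius relation of Definition \ref{defn:QSystem} together with associativity, $h$ is central for the multiplication: $m\circ(h\otimes\id_A)=m\circ(m\otimes\id_A)\circ(m^*\otimes\id_A)=m\circ(\id_A\otimes m)\circ(m^*\otimes\id_A)=m\circ m^*\circ m=h\circ m$, and symmetrically $m\circ(\id_A\otimes h)=h\circ m$. Now $h\circ i\in\cC(1_\cC,A)$, so by connectedness $h\circ i=\lambda\, i$ for a scalar $\lambda$, and unitality gives $h=h\circ m\circ(i\otimes\id_A)=m\circ(h\otimes\id_A)\circ(i\otimes\id_A)=m\circ\big((\lambda i)\otimes\id_A\big)=\lambda\id_A$. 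Since $h=(m^*)^*\circ m^*$ is positive and nonzero, $\lambda>0$, so $A$ is separable. I also record $i^*\circ i=\lambda'\id_{1_\cC}$ with $\lambda'>0$, as in Definition \ref{defn:QSystem}.

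It remains to verify condition (2) of Lemma \ref{lem:UnitaryInvolution}, namely $i^*\circ m\circ m^*\circ i=d_A\id_{1_\cC}$; as this equals $\lambda\lambda'\id_{1_\cC}$, the content is the normalization $\lambda\lambda'=d_A$. The crux is to compute the partial categorical trace of $m^*\circ m\in\End_\cC(A\otimes A)$ over the first tensorand. Unfolding the graphical calculus with the Frobenius relation $m^*\circ m=(\id_A\otimes m)\circ(m^*\otimes\id_A)$, this partial trace equals left multiplication $m\circ(\rho\otimes\id_A)$ by the cotrace $\rho\in\cC(1_\cC,A)$ obtained by closing the input of $m^*$ against its first output leg. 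Connectedness then forces $\rho=a\,i$, so the partial trace collapses to the scalar $a\,\id_A$. I would pin down $a$ twice. On one hand, $\Tr_A(a\,\id_A)=a\,d_A$ equals the full trace $\Tr_{A\otimes A}(m^*\circ m)=\Tr_A(m\circ m^*)=\lambda d_A$ by cyclicity and separability, so $a=\lambda$. On the other hand, applying $i^*$ and using the unitality identity $(\id_A\otimes i^*)\circ m^*=\id_A$ (the adjoint of $m\circ(\id_A\otimes i)=\id_A$) gives $i^*\circ\rho=\Tr_A\big((\id_A\otimes i^*)\circ m^*\big)=\Tr_A(\id_A)=d_A$, while $i^*\circ\rho=a\lambda'$; hence $a\lambda'=d_A$. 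Combining the two yields $\lambda\lambda'=d_A$, which is precisely condition (2). By Lemma \ref{lem:UnitaryInvolution} the algebra is a Q-system, and being connected by hypothesis it is an irreducible Q-system.

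The main obstacle I anticipate is purely the graphical bookkeeping in the third paragraph: correctly identifying the partial trace of $m^*\circ m$ with a left multiplication by an element of $\cC(1_\cC,A)$, which is where connectedness does its work by collapsing the endomorphism to a scalar. Once that identification is secured, the remaining normalization is forced by comparing the two evaluations of $a$, and the trace manipulations (cyclicity and factoring the full trace through the partial trace) are routine in a rigid C*-tensor category.
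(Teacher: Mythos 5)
Your proof is correct, but it takes a genuinely different route from the paper. The paper's proof is essentially by citation: separability is quoted from \cite[Lem.~3.3]{MR3308880}, and standardness of $R=S=m^*\circ i$ is quoted from \cite[\S6]{MR1444286}, with a footnote acknowledging that the latter passes through subfactor theory (and pointing to \cite[Thm.~2.9]{1704.04729} for a categorical alternative). You instead give a self-contained categorical argument: your first paragraph in effect reproves the cited BKLR separability lemma (centrality of $h=m\circ m^*$ via the Frobenius relation, then connectedness collapses $h\circ i$ to $\lambda i$ and unitality gives $h=\lambda\id_A$, with $\lambda>0$ by positivity); your partial-trace computation then verifies condition (2) of Lemma \ref{lem:UnitaryInvolution} directly by pinning down the normalization $\lambda\lambda'=d_A$, and all the steps check out --- the identification $\Tr_1(m^*\circ m)=m\circ(\rho\otimes\id_A)$, the evaluation $i^*\circ\rho=\Tr_A\big((\id_A\otimes i^*)\circ m^*\big)=d_A$ via the adjoint of right unitality, and the comparison $a=\lambda$ via $\Tr_{A\otimes A}=\Tr_A\circ\Tr_1$ and cyclicity. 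One point worth making explicit: the traces and $d_A$ in your argument are the canonical ones built from standard solutions for the \emph{object} $A$, which exist in any rigid C*-tensor category independently of the algebra structure, so there is no circularity in using them before knowing $m^*\circ i$ is standard. Do note, however, that you have not fully eliminated the external input the paper leans on: the implication (2)$\Rightarrow$(1) in Lemma \ref{lem:UnitaryInvolution} is itself cited to \cite[\S6]{MR1444286}, so your argument localizes the subfactor-theoretic step to that already-stated lemma rather than removing it --- a perfectly legitimate use of the paper's framework, and arguably more in the spirit of the paper's stated goal, but the two proofs are logically closer than they first appear. What each buys: the paper's proof is two lines; yours makes visible exactly where connectedness enters (once to scalarize $h$, once to scalarize the cotrace $\rho$) and derives the numerical condition $R^*\circ R=d_A$ from first principles.
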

\begin{proof}
Let $(A,m,i)$ be a connected C* Frobenius algebra.
By \cite[Lem.~3.3]{MR3308880}, $A$ is separable.
Moreover, $R=S= m^*\circ i$ are automatically standard solutions by \cite[\S6]{MR1444286}.\footnote{This was first observed by \cite[Rem.~5.6(3)]{MR1966524}.
Note that the proof in \cite[\S6]{MR1444286} relies on subfactor theory.
The more general result \cite[Thm.~2.9]{1704.04729} does not pass through subfactor theory and implies $R,S$ are standard solutions.}
\end{proof}

\begin{ex}
\label{ex:InnerHomQSystem}
The separable C* Frobenius algebra $(c\otimes \overline{c}, m,i)$ from Example \ref{ex:InnerHom} is a Q-system since $R^*\circ R = d_c^2$.
However, one must replace the multiplication and unit by $m = d_c^{1/2}(\id_c \otimes \ev_c \otimes \id_{\overline{c}})$ and $i= d_c^{-1/2}\coev_c$ to get a normalized Q-system.
As before, the Q-system is irreducible if and only if $c$ is simple.
\end{ex}

The next definition is adapted from \cite[Def.~2.21]{MR2794547} and gives the correct notion of morphism between irreducible Q-systems for our purposes.

\begin{defn}
Suppose $(A, m_A, i_A) , (B,m_B,i_B)$ are algebra objects in $\cC$.
An algebra morphism $\theta \in \cC( A, B)$ is called \emph{involutive} if $\sigma_B \circ \theta = \overline{\theta} \circ \sigma_A$.
\end{defn}

The recent article  \cite{1704.04729} introduces another notion of isomorphism between (unitary) C*-Frobenius algebras.
The say an algebra isomorphism $\theta \in \cC( A, B)$ is a C*-Frobenius algebra isomorphism if 
\begin{equation}
\label{eq:UnitaryFrobeniusAlgebraIsomorphism}
m_A\circ [\id_A\otimes (\theta^*\circ \theta)] = \theta^*\circ \theta \circ m_A.
\end{equation}
We now prove that involutive algebra morphisms satisfy \eqref{eq:UnitaryFrobeniusAlgebraIsomorphism}.

\begin{lem}
Every involutive algebra morphism $\theta\in \cC(A, B)$ satisfies \eqref{eq:UnitaryFrobeniusAlgebraIsomorphism}.
\end{lem}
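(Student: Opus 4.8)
The plan is to reduce \eqref{eq:UnitaryFrobeniusAlgebraIsomorphism} to the statement that $\theta^*$ is a \emph{left $A$-module map} (with $A$ acting on $B$ on the left through $\theta$), and then to identify $\theta^*$ with the Frobenius mate of $\theta$, which is where the involutive hypothesis enters.

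\textbf{Step 1 (reduction).} First I would observe that \eqref{eq:UnitaryFrobeniusAlgebraIsomorphism} follows from the single identity
\[
\theta^*\circ m_B\circ(\theta\otimes \id_B) = m_A\circ(\id_A\otimes \theta^*).
\]
Indeed, precomposing both sides with $\id_A\otimes\theta$ and using the algebra morphism relation $m_B\circ(\theta\otimes\theta)=\theta\circ m_A$ turns the left side into $\theta^*\circ\theta\circ m_A$ and the right side into $m_A\circ(\id_A\otimes(\theta^*\circ\theta))$, which is exactly \eqref{eq:UnitaryFrobeniusAlgebraIsomorphism}. So it suffices to prove the displayed identity, i.e.\ that $\theta^*$ is a left $A$-module map.

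\textbf{Step 2 (the Frobenius mate is a module map).} Using Lemma \ref{lem:RotationRelation}, I would equip $A$ and $B$ with their Frobenius self-dualities $\tilde{\coev}_A=m_A^*\circ i_A$ and $\tilde{\ev}_B=i_B^*\circ m_B$, and form the \emph{mate}
\[
\hat\theta := (\id_A\otimes\tilde{\ev}_B)\circ(\id_A\otimes\theta\otimes\id_B)\circ(\tilde{\coev}_A\otimes\id_B)\colon B\to A.
\]
Since $\theta$ is an algebra morphism, it is a map of $A$-$A$-bimodules (with $A$ acting on $B$ through $\theta$); bending it around the Frobenius self-dualities and using associativity together with the rotational invariance of Lemma \ref{lem:RotationRelation} shows that $\hat\theta$ is again an $A$-bimodule map, in particular a left $A$-module map. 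This step is purely diagrammatic and uses neither the C* structure nor involutivity.

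\textbf{Step 3 (identifying the mate with the adjoint).} The crux is to show $\hat\theta=\theta^*$. Unwinding the definition of $\sigma_A,\sigma_B$ gives $\tilde{\ev}_B=\ev_B\circ(\sigma_B\otimes\id_B)$ and $\tilde{\coev}_A=(\sigma_A^*\otimes\id_A)\circ\ev_A^*$. Substituting these into $\hat\theta$ produces the composite $\id_A\otimes(\sigma_B\circ\theta)\otimes\id_B$, at which point the involutive hypothesis $\sigma_B\circ\theta=\overline{\theta}\circ\sigma_A$ lets me replace $\sigma_B\circ\theta$ by $\overline{\theta}\circ\sigma_A$. Using unitarity of $\sigma_A$ (Lemma \ref{lem:UnitaryInvolution}(3)) together with realness of $A$ (Remark \ref{rem:RealObject}) to simplify $(\sigma_A^*\otimes\sigma_A)\circ\ev_A^*=\coev_A$, the remaining diagram is exactly the categorical dual $(\overline{\theta})^\vee$ of $\overline{\theta}$. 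Finally \eqref{eq:BarStar} gives $(\overline{\theta})^\vee=\overline{(\overline{\theta})^*}=\theta^*$, so $\hat\theta=\theta^*$.

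Combining Steps 2 and 3, $\theta^*=\hat\theta$ is a left $A$-module map, and Step 1 then yields \eqref{eq:UnitaryFrobeniusAlgebraIsomorphism}. I expect the main obstacle to be Step 3: one must track the interplay of the three commuting involutions $(\cdot)^*,\overline{\,\cdot\,},(\cdot)^\vee$, the unitaries $\sigma_A,\sigma_B$, and the normalization carefully, since it is precisely here that the involutive hypothesis is essential --- for a general non-involutive algebra morphism, $\theta^*$ need not coincide with the mate and need not be a module map.
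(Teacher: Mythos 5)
Your proposal is correct and is essentially the paper's own proof in different packaging: your Step 3 identity $\hat\theta=\theta^*$ is precisely the paper's key identity \eqref{eq:SwitchTheta} (in adjoint form), established there by the same combination of the involutive hypothesis $\sigma_B\circ\theta=\overline{\theta}\circ\sigma_A$, the relation \eqref{eq:BarStar}, and the Frobenius self-dualities expressed through $\sigma_A,\sigma_B$. Your Steps 1--2 (reducing \eqref{eq:UnitaryFrobeniusAlgebraIsomorphism} to the left $A$-module property of $\theta^*$ and showing the Frobenius mate of an algebra morphism is a module map) are a conceptual repackaging of the paper's direct diagram chase, which uses exactly the same ingredients --- the rotation relation of Lemma \ref{lem:RotationRelation}, the coalgebra morphism property of $\theta^*$, and the adjoint of \eqref{eq:SwitchTheta}.
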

\begin{proof}
First, using that $\theta$ is involutive and \eqref{eq:BarStar}, we have
\begin{equation}
\label{eq:SwitchTheta}
\begin{tikzpicture}[baseline=.4cm]
	\draw (-.3,1.4) -- (-.3,.3) arc (-180:0:.3cm) -- (.3,1.4);
	\draw (0,0) -- (0,-.3);
	\filldraw (0,0) circle (.05cm) node [above] {\scriptsize{$m_A^*$}};
	\filldraw (0,-.3) circle (.05cm) node [right] {\scriptsize{$i_A$}};
	\roundNbox{unshaded}{(.3,.75)}{.25}{0}{0}{$\theta$}
	\node at (-.5,1.2) {\scriptsize{$A$}};
	\node at (.5,1.2) {\scriptsize{$B$}};
\end{tikzpicture}
=
\begin{tikzpicture}[baseline=.4cm]
	\draw (-1.5,1.4) -- (-1.5,-.3) arc (-180:0:.3cm) -- (-.9,.3) arc (180:0:.3cm) (-.3,.3) arc (-180:0:.3cm) -- (.3,1.4);
	\draw (0,0) -- (0,-.3);
	\filldraw (0,0) circle (.05cm) node [above] {\scriptsize{$m_A^*$}};
	\filldraw (0,-.3) circle (.05cm) node [right] {\scriptsize{$i_A$}};
	\roundNbox{unshaded}{(.3,.75)}{.25}{0}{0}{$\theta$}
	\node at (-1.7,1.2) {\scriptsize{$A$}};
	\node at (.5,1.2) {\scriptsize{$B$}};
	\node at (-1.1,.3) {\scriptsize{$\overline{A}$}};
\end{tikzpicture}
=
\begin{tikzpicture}[baseline=.4cm]
	\draw (-1.5,1.4) -- (-1.5,-.3) arc (-180:0:.3cm) -- (-.9,.3) arc (180:0:.3cm) (-.3,.3) arc (-180:0:.3cm) -- (.3,1.4);
	\draw (0,0) -- (0,-.3);
	\filldraw (0,0) circle (.05cm) node [above] {\scriptsize{$m_B^*$}};
	\filldraw (0,-.3) circle (.05cm) node [right] {\scriptsize{$i_B$}};
	\roundNbox{unshaded}{(-.9,0)}{.25}{0}{0}{$\overline{\theta}$}
	\node at (-1.7,1.2) {\scriptsize{$A$}};
	\node at (.5,1.2) {\scriptsize{$B$}};
	\node at (-1.1,.5) {\scriptsize{$\overline{B}$}};
	\node at (-.7,-.5) {\scriptsize{$\overline{A}$}};
\end{tikzpicture}
\underset{\eqref{eq:BarStar}}{=}
\begin{tikzpicture}[baseline=.4cm]
	\draw (-.3,1.4) -- (-.3,.3) arc (-180:0:.3cm) -- (.3,1.4);
	\draw (0,0) -- (0,-.3);
	\filldraw (0,0) circle (.05cm) node [above] {\scriptsize{$m_B^*$}};
	\filldraw (0,-.3) circle (.05cm) node [right] {\scriptsize{$i_B$}};
	\roundNbox{unshaded}{(-.3,.75)}{.25}{0}{0}{$\theta^*$}
	\node at (-.5,1.2) {\scriptsize{$A$}};
	\node at (.5,1.2) {\scriptsize{$B$}};
\end{tikzpicture}
.
\end{equation}
Now using the adjoint of \eqref{eq:SwitchTheta}, together with Lemma \ref{lem:RotationRelation}, and that $\theta^*\in \cC(B, A)$ is a coalgebra morphism, we have
\begin{equation*}
\begin{tikzpicture}[baseline=-.3cm]
	\draw (0,.6) -- (0,1);
	\filldraw (0,.6) circle (.05cm) node [below] {\scriptsize{$m_A$}};
	\draw (-.3,-1.4) -- (-.3,.3) arc (180:0:.3cm) -- (.3,-1.4);
	\roundNbox{unshaded}{(.3,-.1)}{.25}{0}{0}{$\theta^*$}
	\roundNbox{unshaded}{(.3,-.75)}{.25}{0}{0}{$\theta$}
	\node at (.2,.8) {\scriptsize{$A$}};
	\node at (.5,-1.2) {\scriptsize{$A$}};
	\node at (-.5,-1.2) {\scriptsize{$A$}};
\end{tikzpicture}
=
\begin{tikzpicture}[baseline=-.1cm]
	\draw (.3,-1.4) -- (.3,.3);
	\draw (-.6,-1.4) -- (-.6,.6) arc (180:0:.3cm) (0,.6) arc (-180:0:.3cm) -- (.6,1.4);
	\draw (-.3,.9) -- (-.3,1.2);
	\filldraw (.3,.3) circle (.05cm) node [above] {\scriptsize{$m_A^*$}};
	\filldraw (-.3,.9) circle (.05cm) node [below] {\scriptsize{$m_A$}};
	\filldraw (-.3,1.2) circle (.05cm) node [left] {\scriptsize{$i_A^*$}};
	\roundNbox{unshaded}{(.3,-.1)}{.25}{0}{0}{$\theta^*$}
	\roundNbox{unshaded}{(.3,-.75)}{.25}{0}{0}{$\theta$}
	\node at (.8,1.2) {\scriptsize{$A$}};
	\node at (.5,-1.2) {\scriptsize{$A$}};
	\node at (-.8,-1.2) {\scriptsize{$A$}};
\end{tikzpicture}
=
\begin{tikzpicture}[baseline=-.2cm]
	\draw (.3,-1.6) -- (.3,-.5);
	\draw (-.6,-1.6) -- (-.6,.6) arc (180:0:.3cm) -- (0,-.2) arc (-180:0:.3cm) -- (.6,1.4);
	\draw (-.3,.9) -- (-.3,1.2);
	\filldraw (.3,-.5) circle (.05cm) node [above] {\scriptsize{$m_B^*$}};
	\filldraw (-.3,.9) circle (.05cm) node [below] {\scriptsize{$m_A$}};
	\filldraw (-.3,1.2) circle (.05cm) node [left] {\scriptsize{$i_A^*$}};
	\roundNbox{unshaded}{(0,.25)}{.25}{0}{0}{$\theta^*$}
	\roundNbox{unshaded}{(.6,.25)}{.25}{0}{0}{$\theta^*$}
	\roundNbox{unshaded}{(.3,-.95)}{.25}{0}{0}{$\theta$}
	\node at (.8,1.2) {\scriptsize{$A$}};
	\node at (.5,-1.4) {\scriptsize{$A$}};
	\node at (-.8,-1.4) {\scriptsize{$A$}};
\end{tikzpicture}
\underset{\eqref{eq:SwitchTheta}^*}{=}
\begin{tikzpicture}[baseline=-.3cm]
	\draw (.3,-1.4) -- (.3,-.3);
	\draw (-.6,-1.4) -- (-.6,0) arc (180:0:.3cm) (0,0) arc (-180:0:.3cm) -- (.6,1.1);
	\draw (-.3,.3) -- (-.3,.6);
	\filldraw (.3,-.3) circle (.05cm) node [above] {\scriptsize{$m_B^*$}};
	\filldraw (-.3,.3) circle (.05cm) node [below] {\scriptsize{$m_B$}};
	\filldraw (-.3,.6) circle (.05cm) node [left] {\scriptsize{$i_B^*$}};
	\roundNbox{unshaded}{(-.6,-.75)}{.25}{0}{0}{$\theta$}
	\roundNbox{unshaded}{(.6,.45)}{.25}{0}{0}{$\theta^*$}
	\roundNbox{unshaded}{(.3,-.75)}{.25}{0}{0}{$\theta$}
	\node at (.8,.9) {\scriptsize{$A$}};
	\node at (.5,-1.2) {\scriptsize{$A$}};
	\node at (-.8,-1.2) {\scriptsize{$A$}};
\end{tikzpicture}
=
\begin{tikzpicture}[baseline=-.3cm]
	\draw (0,0) -- (0,1.1);
	\filldraw (0,0) circle (.05cm) node [below] {\scriptsize{$m_B$}};
	\draw (-.3,-1.4) -- (-.3,-.3) arc (180:0:.3cm) -- (.3,-1.4);
	\roundNbox{unshaded}{(0,.45)}{.25}{0}{0}{$\theta^*$}
	\roundNbox{unshaded}{(.3,-.75)}{.25}{0}{0}{$\theta$}
	\roundNbox{unshaded}{(-.3,-.75)}{.25}{0}{0}{$\theta$}
	\node at (.2,.9) {\scriptsize{$A$}};
	\node at (.5,-1.2) {\scriptsize{$A$}};
	\node at (-.5,-1.2) {\scriptsize{$A$}};
\end{tikzpicture}
=
\begin{tikzpicture}[baseline=-.3cm]
	\draw (0,-.9) -- (0,.9);
	\filldraw (0,-.9) circle (.05cm) node [below] {\scriptsize{$m_A$}};
	\draw (-.3,-1.4) -- (-.3,-1.2) arc (180:0:.3cm) -- (.3,-1.4);
	\roundNbox{unshaded}{(0,.25)}{.25}{0}{0}{$\theta^*$}
	\roundNbox{unshaded}{(0,-.45)}{.25}{0}{0}{$\theta$}
	\node at (.2,.7) {\scriptsize{$A$}};
	\node at (.5,-1.2) {\scriptsize{$A$}};
	\node at (-.5,-1.2) {\scriptsize{$A$}};
\end{tikzpicture}
.
\qedhere
\end{equation*}
\end{proof}

\subsection{Compact connected W*-algebra objects}

An algebra object $\bfA \in \Vec(\cC)$ may be viewed as a lax monoidal functor $(\cC^{\natural})^{\op} \to \Vec$ by \cite[Prop.~3.3]{1611.04620}.
If $\bfA \in \Vec(\cC)$ is a compact algebra object, the corresponding object $A\in \cC^{\natural}$ satisfying $\bfA(a) = \cC^{\natural}(a,A)$ for $a\in \cC$ is easily seen to be an algebra object.

\begin{defn}
A $*$-structure on an algebra object $\bfA\in \Vec(\cC)$ is a collection of conjugate linear natural isomorphisms $j_a : \bfA(a) \to \bfA(\overline{a})$ satisfying the following axioms:
\begin{itemize}
\item 
(conjugate naturality) 
for all $\psi \in \cC(b, a)$ and $f\in \bfA(a)$, $j_b(f \circ \psi) = j_{a}(f) \circ \overline{\psi}$.
\item
(involutive)
for all $f\in \bfA(a)$, $j_{\overline{a}}(j_a(f)) = f$.
\item
(unital) $j_{1_\cC} (i) =i$.
\item
(monoidal)
for all $f\in \bfA(a)$ and $g\in \bfA(b)$, $j_{a\otimes b}(m \circ (f\otimes g) ) = m\circ (j_a(g) \otimes j_b(f))$.
\end{itemize}
\end{defn}

\begin{ex}
\label{ex:StarAlgebraFromInnerHom}
Suppose $c\in \cC$, and consider the normalized Q-system $(c\otimes \overline{c},m,i) \in \cC$ from Example \ref{ex:InnerHomQSystem}.
By the Yoneda embedding, $\mathbf{c}\otimes\overline{\mathbf{c}} : (\cC^{\natural})^{\op} \to \Vec$ is given by $(\mathbf{c}\otimes\overline{\mathbf{c}})(a) = \cC^{\natural}(a, c\otimes \overline{c})$.
The $*$-algebra structure on $\mathbf{c}\otimes\overline{\mathbf{c}}$ is given by 
\begin{align*}
\mu_{a,b} &: 
(\mathbf{c}\otimes\overline{\mathbf{c}})(a)\otimes (\mathbf{c}\otimes\overline{\mathbf{c}})(b) 
\to 
(\mathbf{c}\otimes\overline{\mathbf{c}})(a\otimes b)
&&
f\otimes g\mapsto d_c^{1/2}(\id_c \otimes \ev_c \otimes \id_{\overline{c}})\circ (f\otimes g)
\\
i &\in (\mathbf{c}\otimes\overline{\mathbf{c}})(1_\cC)=\cC^{\natural}(1_\cC, c\otimes \overline{c})
&&i=d_c^{-1/2}\coev_c
\\
j_a &: (\mathbf{c}\otimes\overline{\mathbf{c}})(a) \to (\mathbf{c}\otimes\overline{\mathbf{c}})(\overline{a})
&&
f\mapsto \overline{f}
\end{align*}
(this definition of $j$ suppresses the isomorphism between the real object $c\otimes \overline{c}$ and its conjugate).
Then $\mathbf{c}\otimes\overline{\mathbf{c}}$ is a compact $*$-algebra object in $\Vec(\cC)$ which is connected if and only if $c$ is simple.
The corresponding object in $\cC^{\natural}$ is obviously $c\otimes \overline{c}$.
\end{ex}

It was shown in \cite[Thm.~3.20]{1611.04620} that $*$-structures on algebra objects in $\Vec(\cC)$ are equivalent to dagger structures on the cyclic $\cC$-module category $(\cM_\bfA,m)$ whose objects are the $\mathbf{c}\otimes \bfA$ for $c\in \cC$ and whose morphisms are right $\bfA$-module morphisms.
Indeed, when $\bfA\in \Vec(\cC)$ is compact, we have
$$
\cM_\bfA(a\otimes \bfA, b\otimes \bfA)
\cong
\cC(a, b\otimes A)
$$
with composition given for $f\in \cC(a, b\otimes A)$ and $g\in \cC(b, c\otimes A)$ by
$$
g \circ_{\cM_\bfA} f := (\id_c \otimes m) \circ_\cC (g\otimes \id_{A}) \circ f.
$$
The dagger structure on $\cM_\bfA$ induced by $j$ is given on $f\in \cC(a, b\otimes A)$ by
$$
f^* = j_{\overline{b}\otimes a}\bigg( (\ev_b \otimes \id_A) \circ f\bigg) \circ (\coev_a \otimes \id_{\overline{b}}).
$$
Conversely, given a cyclic $\cC$-module dagger category $(\cM, m)$, we recover the lax monoidal functor $\bfA : (\cC^{\natural})^{\op} \to \Vec$ by $\bfA(a) = \cM(a\otimes m ,m)$, and the $*$-structure is given by 
$$
j_a(f) = (\ev_a \otimes \id_m)\circ (\id_{\overline{a}}\otimes f^*)
$$ 
for $f\in \bfA(a)$.
We see that $\bfA$ is compact if and only if $\cM$ is \emph{proper} \cite{1704.04729} or \emph{cofinite} \cite{1511.07982}, i.e., $\cM(a\otimes m, m)$ is always finite dimensional, and non-zero for only finitely many $a\in \Irr(\cC)$.

Just as being a C*-algebra is a property of a complex $*$-algebra and not extra structure, being a C*-category is a property of a dagger category and not extra structure.
We refer the reader to \cite[\S2.1]{1611.04620} for more details.

\begin{defn}
A $*$-algebra object $(\bfA, m, i, j)\in \Vec(\cC)$ is a C*/W*-algebra object if the cyclic $\cC$-module dagger category $\cM_\bfA$ is a C*/W*-category.
If $\bfA$ is locally finite, meaning $\bfA(a)$ is finite dimensional for all $a\in \cC$, then all morphisms spaces of $\cM_\bfA$ are finite dimensional.
In this case, $\cM_\bfA$ is a C*-category if and only if $\cM_\bfA$ is a W*-category.
\end{defn}

\begin{ex}
\label{ex:W*AlgebraFromInnerHom}
The $*$-algebra object $\mathbf{c}\otimes \overline{\mathbf{c}}=\cC^\natural(\cdot, c\otimes \overline{c})\in \Vec(\cC)$ from Example \ref{ex:StarAlgebraFromInnerHom} is a W*-algebra object.
To see this, we construct a $*$-algebra natural isomorphism $\pi:\bfB(\mathbf{c}) \Rightarrow\mathbf{c}\otimes \overline{\mathbf{c}}$, where the instance of $\mathbf{c}$ inside $\bfB(\mathbf{c})$ is the linear dagger functor $\mathbf{c} = \cC(\cdot, c) : \cC^{\op} \to \Hilb$, which is a Hilbert space object under the Yoneda embedding.
For $a\in \cC$ and $f\in \bfB(\mathbf{c})(a)=\cC(a\otimes c, c)$, we define
$$
\pi_a(f) = 
d_c^{-1/2}
\begin{tikzpicture}[baseline=-.1cm]
	\draw (0,0) -- (0,.7);
	\draw (-.3,-.3) -- (-.3,-.7);
	\draw (.3,-.3) arc (-180:0:.3cm) -- (.9,.7);
	\roundNbox{unshaded}{(0,0)}{.3}{.3}{.3}{$f$}
	\node at (-.2,.5) {\scriptsize{$c$}};
	\node at (-.5,-.5) {\scriptsize{$a$}};
	\node at (.1,-.5) {\scriptsize{$c$}};
	\node at (1.1,.5) {\scriptsize{$\overline{c}$}};
\end{tikzpicture}
\in 
\cC(a, c\otimes \overline{c}) = (\mathbf{c}\otimes \overline{\mathbf{c}})(a).
$$
The reader can check $\pi$ is a $*$-algebra natural isomorphism.
(It is similar to, but easier than, the proof of Theorem \ref{thm:W*toQandBack} below.)
Finally, $\bfB(\mathbf{c})$ is the W*-algebra object corresponding to the cyclic $\cC$-module W*-category $(\cC, c)$, and thus $\mathbf{c}\otimes \overline{\mathbf{c}}$ is W*.
\end{ex}

\subsection{Inner products on hom spaces of \texorpdfstring{$\cC$}{C}}

The morphism spaces of $\cC$ come equipped with inner products given by
\begin{equation}
\label{eq:CInnerProduct}
\langle f| g\rangle_{\cC(a,b)} = 
\begin{tikzpicture}[baseline=-.1cm]
	\draw (0,.8) arc (180:0:.3cm) -- (.6,-.8) arc (0:-180:.3cm) -- (0,.8);
	\roundNbox{unshaded}{(0,.5)}{.3}{0}{0}{$f^*$}
	\roundNbox{unshaded}{(0,-.5)}{.3}{0}{0}{$g$}
	\node at (-.2,-1) {\scriptsize{$a$}};
	\node at (-.2,0) {\scriptsize{$b$}};
	\node at (-.2,1) {\scriptsize{$a$}};
	\node at (.8,0) {\scriptsize{$\overline{a}$}};
\end{tikzpicture}.
\end{equation}
For all $a,b\in \cC$, we pick orthonormal bases $\ONB(a,b)$ for $\cC(a,b)$ under the inner product \eqref{eq:CInnerProduct}.

For each $c\in \Irr(\cC)$ and $a\in \cC$, we let $\Isom(c, a)= \set{\sqrt{d_c}f}{f\in \ONB(c,a)}$, which is a maximal set of isometries in $\cC(c, a)$ with mutually orthogonal ranges.
Note that $\Isom(c,a)$ is an orthonormal basis for $\cC(c,a)$ with the modified inner product given by
\begin{equation}
\label{eq:IsomInnerProduct}
\langle f|g\rangle_{\Isom} \id_c = f^*\circ g .
\end{equation}

\begin{defn}
The 1-\emph{click rotation} or \emph{Fourier transform} $\cF$ on $\cC(a, b\otimes c)$ for $a,b,c\in\cC$ is given by
\begin{equation*}
\cC(c, a\otimes b) \ni f 
\mapsto 
\begin{tikzpicture}[baseline=-.1cm]
	\draw (0,-.3) arc (0:-180:.3cm) -- (-.6,.7);
	\draw (-.15,.3) -- (-.15,.7);
	\draw (.15,.3) arc (180:0:.3cm) -- (.75,-.7);
	\roundNbox{unshaded}{(0,0)}{.3}{0}{0}{$f$}
	\node at (-.15,.9) {\scriptsize{$a$}};
	\node at (-.6,.9) {\scriptsize{$\overline{c}$}};
	\node at (.75,-.9) {\scriptsize{$\overline{b}$}};
\end{tikzpicture}
\in \cC(\overline{b}, \overline{c}\otimes a).
\end{equation*}
Note that if $a,b,c\in\Irr(\cC)$ and $f\in \Isom(c, a\otimes b)$, then a weighting is required to produce another isometry:
\begin{equation*}
\Isom(c, a\otimes b) \ni f 
\mapsto 
\left(
\frac{d_b}{d_c}
\right)^{1/2}
\cF(f)
\end{equation*}
produces an isometry in $\cC(\overline{b}, \overline{c}\otimes a)$, and thus 
$$\set{\left(
\frac{d_b}{d_c}
\right)^{1/2}\cF(f)}{f\in \Isom(c, a\otimes b)
}
\subset \cC(\overline{b}, \overline{c}\otimes a)
$$ 
is a new orthonormal basis under inner product \eqref{eq:IsomInnerProduct}.
\end{defn}

Suppose now that $\bfA\in \Vec(\cC)$ is a compact connected W*-algebra object corresponding to $A\in \cC^{\natural}$ such that $\bfA(a) = \cC^{\natural}(a, A)$ for $a\in \cC$.
Then $\bfA$ is tracial by \cite[Prop.~2.6]{1704.02035}, and we get an additional inner product on $\bfA(a)=\cC(a,A)$ by
\begin{equation}
\label{eq:L2AInnerProduct}
\langle f|g\rangle_a :=
\begin{tikzpicture}[baseline = -.1cm, xscale=-1]
    \draw (-.5,-.3) arc (-180:0:.5cm);
    \draw (-.5,.3) arc (180:0:.5cm);
    \filldraw (0,.8) circle (.05cm);
    \draw (0,.8) -- (0,1.2);
    \roundNbox{unshaded}{(-.5,0)}{.3}{0}{0}{$g$}
    \roundNbox{unshaded}{(.5,0)}{.3}{.2}{.2}{$j_a(f)$}
    \node at (-.65,-.5) {\scriptsize{$\mathbf{a}$}};
    \node at (-.65,.5) {\scriptsize{$\bfA$}};
    \node at (.65,-.5) {\scriptsize{$\overline{\mathbf{a}}$}};
    \node at (.65,.5) {\scriptsize{$\bfA$}};
    \node at (.15,1) {\scriptsize{$\bfA$}};
\end{tikzpicture}
\underset{\text{($\bfA$ tracial)}}{=}
\begin{tikzpicture}[baseline = -.1cm]
    \draw (-.5,-.3) arc (-180:0:.5cm);
    \draw (-.5,.3) arc (180:0:.5cm);
    \filldraw (0,.8) circle (.05cm);
    \draw (0,.8) -- (0,1.2);
    \roundNbox{unshaded}{(-.5,0)}{.3}{0}{0}{$g$}
    \roundNbox{unshaded}{(.5,0)}{.3}{.2}{.2}{$j_a(f)$}
    \node at (-.65,-.5) {\scriptsize{$\mathbf{a}$}};
    \node at (-.65,.5) {\scriptsize{$\bfA$}};
    \node at (.65,-.5) {\scriptsize{$\overline{\mathbf{a}}$}};
    \node at (.65,.5) {\scriptsize{$\bfA$}};
    \node at (.15,1) {\scriptsize{$\bfA$}};
\end{tikzpicture}
={}_a\langle g, f\rangle
\end{equation}
We form the compact Hilbert space object $\bfL^2\bfA \in \Hilb(\cC)$ by $\bfL^2\bfA(a) = \bfA(a)$ with inner product \eqref{eq:L2AInnerProduct}.

Now $\bfA$ has a canonical state corresponding to the unique state on $\bfA(1_\cC) = \bbC$ via $i_\bfA \mapsto 1_\bbC$.
The GNS representation of $\bfA$ gives a canonical faithful $*$-algebra natural transformation $\lambda : \bfA \Rightarrow \bfB(\bfL^2\bfA)$ \cite[Eq.~(20)]{1611.04620}.
Since $\bfA$ is connected, the inner product \eqref{eq:L2AInnerProduct} satisfies
\begin{equation}
\label{eq:InnerProductInHilbC}
\langle f|g\rangle_a \id_{\bfL^2\bfA}
=
\begin{tikzpicture}[baseline=-.1cm]
	\draw (-.15,-.5) -- (-.15,.5);
	\draw (.15,-1.2) -- (.15,1.2);
	\roundNbox{unshaded}{(0,.5)}{.3}{.35}{.35}{$\lambda_a(g)$}
	\roundNbox{unshaded}{(0,-.5)}{.3}{.35}{.35}{$\lambda_a(f)^*$}
	\node at (-.3,0) {\scriptsize{$\mathbf{a}$}};
	\node at (.6,0) {\scriptsize{$\bfL^2\bfA$}};
	\node at (.6,1) {\scriptsize{$\bfL^2\bfA$}};
	\node at (.6,-1) {\scriptsize{$\bfL^2\bfA$}};
\end{tikzpicture}
=
\begin{tikzpicture}[baseline=-.1cm]
	\draw (-.3,.8) arc (0:180:.3cm) -- (-.9,-.8) arc (-180:0:.3cm);
	\draw (.3,-1.2) -- (.3,1.2);
	\roundNbox{unshaded}{(0,.5)}{.3}{.3}{.3}{$\lambda_a(g)$}
	\roundNbox{unshaded}{(0,-.5)}{.3}{.3}{.3}{$\lambda_a(f)^*$}
	\node at (-1.1,0) {\scriptsize{$\overline{\mathbf{a}}$}};
	\node at (.75,0) {\scriptsize{$\bfL^2\bfA$}};
	\node at (.75,1) {\scriptsize{$\bfL^2\bfA$}};
	\node at (.75,-1) {\scriptsize{$\bfL^2\bfA$}};
\end{tikzpicture}
\,.
\end{equation}

Finally, given a $*$-algebra natural transformation $\theta: \bfA \Rightarrow \bfB$ of connected W*-algebra objects, notice that $\bfL^2\bfA(a)\ni f\mapsto \theta(f)\in \bfL^2\bfB(a)$ gives a canonical isometry $\bfL^2\theta: \bfL^2\bfA \Rightarrow \bfL^2\bfB$ since 
\begin{align}
\langle \theta_a(f)|\theta_a(g)\rangle_a^\bfB \, i_\bfB
&=
\begin{tikzpicture}[baseline = -.1cm, xscale=-1]
    \draw (-.5,-.3) arc (-180:0:.5cm);
    \draw (-.5,.3) arc (180:0:.5cm);
    \filldraw (0,.8) circle (.05cm);
    \draw (0,.8) -- (0,1.2);
    \roundNbox{unshaded}{(-.5,0)}{.3}{.2}{.1}{$\theta_a(g)$}
    \roundNbox{unshaded}{(1,0)}{.3}{.6}{.6}{$j^\bfB_a(\theta_{a}(f))$}
    \node at (-.65,-.5) {\scriptsize{$\mathbf{a}$}};
    \node at (-.65,.5) {\scriptsize{$\bfB$}};
    \node at (.65,-.5) {\scriptsize{$\overline{\mathbf{a}}$}};
    \node at (.65,.5) {\scriptsize{$\bfB$}};
    \node at (.15,1) {\scriptsize{$\bfB$}};
\end{tikzpicture}
=
\begin{tikzpicture}[baseline = -.1cm, xscale=-1]
    \draw (-.5,-.3) arc (-180:0:.5cm);
    \draw (-.5,.3) arc (180:0:.5cm);
    \filldraw (0,.8) circle (.05cm);
    \draw (0,.8) -- (0,1.2);
    \roundNbox{unshaded}{(-.5,0)}{.3}{.2}{.1}{$\theta_a(g)$}
    \roundNbox{unshaded}{(1,0)}{.3}{.6}{.6}{$\theta_{\overline{a}}(j^\bfA_a(f))$}
    \node at (-.65,-.5) {\scriptsize{$\mathbf{a}$}};
    \node at (-.65,.5) {\scriptsize{$\bfB$}};
    \node at (.65,-.5) {\scriptsize{$\overline{\mathbf{a}}$}};
    \node at (.65,.5) {\scriptsize{$\bfB$}};
    \node at (.15,1) {\scriptsize{$\bfB$}};
\end{tikzpicture}
=
\theta_{1_\cC}
\left(
\begin{tikzpicture}[baseline = -.1cm, xscale=-1]
    \draw (-.5,-.3) arc (-180:0:.5cm);
    \draw (-.5,.3) arc (180:0:.5cm);
    \filldraw (0,.8) circle (.05cm);
    \draw (0,.8) -- (0,1.2);
    \roundNbox{unshaded}{(-.5,0)}{.3}{0}{0}{$g$}
    \roundNbox{unshaded}{(.5,0)}{.3}{.3}{.3}{$j^\bfA_a(f)$}
    \node at (-.65,-.5) {\scriptsize{$\mathbf{a}$}};
    \node at (-.65,.5) {\scriptsize{$\bfA$}};
    \node at (.65,-.5) {\scriptsize{$\overline{\mathbf{a}}$}};
    \node at (.65,.5) {\scriptsize{$\bfA$}};
    \node at (.15,1) {\scriptsize{$\bfA$}};
\end{tikzpicture}
\right)
\notag
\\&=
\theta_{1_\cC}(\langle f| g\rangle_a^\bfA i_\bfA)
=
\langle f| g\rangle_a^\bfA \theta_{1_\cC}(i_\bfA)
=
\langle f| g\rangle_a^\bfA i_\bfB
\label{eq:Isometry}
\end{align}
Noticing that this argument was independent of the W*-algebra objects being compact, we conclude the following.

\begin{cor}
Any $*$-algebra natural transformation $\theta: \bfA \Rightarrow \bfB$ between connected \emph{W*}-algebra objects in $\Vec(\cC)$ is injective and induces an isometry $\bfL^2\theta: \bfL^2\bfA \Rightarrow \bfL^2\bfB$ with respect to the right (respectively left) inner products. 
\end{cor}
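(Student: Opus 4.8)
The plan is to observe that the displayed computation \eqref{eq:Isometry} immediately preceding the statement already contains essentially all the content, and that what remains is to extract the two asserted conclusions from it. First I would record the identity established there: for every $a\in\cC$ and all $f,g\in\bfA(a)$,
\[
\langle \theta_a(f)| \theta_a(g)\rangle_a^{\bfB}\, i_\bfB
=
\langle f| g\rangle_a^{\bfA}\, i_\bfB .
\]
The three ingredients feeding this are exactly the ones used in \eqref{eq:Isometry}: the $*$-compatibility of $\theta$ (replacing $j^{\bfB}_a\circ\theta_a$ by $\theta_{\overline a}\circ j^{\bfA}_a$), the naturality and linearity of $\theta$ (which let one pull $\theta_{1_\cC}$ outside the closed diagram, acting on the scalar multiple $\langle f| g\rangle_a^{\bfA}\, i_\bfA\in\bfA(1_\cC)$), and unitality $\theta_{1_\cC}(i_\bfA)=i_\bfB$.

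Next I would invoke connectedness of the \emph{target} $\bfB$. Since $\bfB(1_\cC)=\bbC\, i_\bfB$ is one dimensional and $i_\bfB\neq 0$, the displayed identity forces the scalar equality $\langle \theta_a(f)| \theta_a(g)\rangle_a^{\bfB}=\langle f| g\rangle_a^{\bfA}$ for all $f,g$. This is precisely the statement that each component $\bfL^2\theta_a\colon f\mapsto\theta_a(f)$ preserves the right inner product \eqref{eq:L2AInnerProduct}, i.e.\ that $\bfL^2\theta\colon\bfL^2\bfA\Rightarrow\bfL^2\bfB$ is an isometry in $\Hilb(\cC)$. The statement for the left inner products follows from the same computation: the tracial identification of the two diagrams in \eqref{eq:L2AInnerProduct} (valid for both $\bfA$ and $\bfB$) matches left and right products, and alternatively one simply reruns the identical diagrammatic manipulation with the caps exchanged.

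Finally, injectivity is a formal consequence of isometry together with definiteness. Because $\bfL^2\bfA\in\Hilb(\cC)$, the form $\langle\cdot|\cdot\rangle_a^{\bfA}$ is a genuine (positive definite) inner product on $\bfL^2\bfA(a)=\bfA(a)$; hence if $\theta_a(f)=0$ then $\langle f| f\rangle_a^{\bfA}=\langle \theta_a(f)|\theta_a(f)\rangle_a^{\bfB}=0$, whence $f=0$. Thus each $\theta_a$ is injective, and $\theta$ is injective as a natural transformation. The only genuinely delicate point is the cancellation of $i_\bfB$, which relies essentially on connectedness of $\bfB$: without it, the scalar $\langle f| g\rangle_a^{\bfB}$ would not be recoverable from $\langle f| g\rangle_a^{\bfB}\, i_\bfB$. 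Everything else is bookkeeping already carried out in \eqref{eq:Isometry}, and, as the surrounding text records, the argument never uses compactness of $\bfA$ or $\bfB$.
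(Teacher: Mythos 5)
Your proposal is correct and is essentially the paper's own proof: the paper derives the corollary directly from the computation \eqref{eq:Isometry} together with the remark that compactness is never used, and your additions (explicitly cancelling $i_\bfB$ via connectedness of $\bfB$, deducing injectivity from positive definiteness of the inner product \eqref{eq:L2AInnerProduct}, and rerunning the diagrammatic argument with the caps exchanged for the left inner product) merely spell out steps the paper leaves implicit. No gaps.
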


\section{From Q-systems to W*-algebra objects}
\label{sec:QtoW}

In this section, we define a functor $\WStar$ from normalized irreducible Q-systems to compact connected W*-algebra objects.

\subsection{Objects}

Suppose we have a normalized irreducible Q-system $(A, m,i)$ in $\cC$.
We now define a compact connected W*-algebra object $\WStar(A):=(\bfA , m, i, j) \in \cC^{\natural}$.
For $a\in \cC^{\natural}$, we define $\bfA(a) = \cC^{\natural}(a, A)$.
We define the laxitor $\mu_{a,b} : \bfA(a) \otimes \bfA(b) \to \bfA(a\otimes b)$ to be the map $f\otimes g \mapsto m\circ (f\otimes g)$ and the same unit map $i \in \bfA(1_\cC) = \cC(1_\cC, A)$.
It is straightforward to verify $(\bfA, \mu,i)$ is a lax monoidal functor $(\cC^{\natural})^{\op}\to \Vec$.
Since $A\in \cC$, we have $\bfA$ is compact, and $\bfA(1_\cC)$ is clearly one dimensional, and thus $\bfA$ is connected.

We now define the $*$-structure on $\bfA$.

\begin{defn}
We define a $*$-structure $j$ on $\bfA$ by 
$$
\cC^{\natural}(a, A)
=
\bfA(a)
\ni
f
\overset{j_a}{\longmapsto}
\sigma_A^{-1}
\circ
\overline{f}
\in 
\cC^{\natural}(\overline{a}, A),
$$
where $\sigma_A\in \cC(A, \overline{A})$ is the unitary isomorphism from Lemma \ref{lem:UnitaryInvolution}.
Using that $\overline{\sigma_A}\circ \sigma_A = \varphi_A$ by \cite[Lem.~3.5(13)]{MR2794547}, it is easy to see $(j_a)_{a\in \cC}$ satisfies the axioms of a $*$-structure on $\bfA$.
\end{defn}

\begin{prop}
\label{prop:FrobeniusToW*}
The compact connected $*$-algebra object $(\bfA, \mu, i, j) \in \Vec(\cC)$ is \emph{W*}. 
\end{prop}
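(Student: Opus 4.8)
The plan is to use the defining criterion that $(\bfA,\mu,i,j)$ is W* if and only if the cyclic $\cC$-module dagger category $\cM_\bfA$ is a W*-category, and to reduce the whole statement to a single positivity assertion. First I would observe that $\bfA$ is locally finite: since $\cC$ is a rigid C*-tensor category, each space $\bfA(a)=\cC^{\natural}(a,A)=\cC(a,A)$ is finite dimensional, so every morphism space $\cM_\bfA(a\otimes\bfA,b\otimes\bfA)\cong\cC(a,b\otimes A)$ is finite dimensional. By the remark preceding the definition of a W*-algebra object, $\cM_\bfA$ is then a C*-category if and only if it is a W*-category, so it suffices to prove $\cM_\bfA$ is a C*-category. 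Since being C* is a \emph{property} of a dagger category and the endomorphism algebras are finite dimensional, the entire content is the \emph{positivity and faithfulness} of the dagger structure induced by $j$; once the relevant sesquilinear forms are shown to be positive definite, the operator norms and the C*-identity are automatic from the finite-dimensional theory.

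Next I would reduce positivity to the native C*-structure of $\cC$. Under the identification $\cM_\bfA(a\otimes\bfA,b\otimes\bfA)\cong\cC(a,b\otimes A)$, the dagger induced by $j$ is $f^\ast=j_{\overline b\otimes a}\big((\ev_b\otimes\id_A)\circ f\big)\circ(\coev_a\otimes\id_{\overline b})$, as recorded before the definition of a W*-algebra object. Substituting the explicit formula $j_a(f)=\sigma_A^{-1}\circ\overline f$ and using that $\sigma_A$ is \emph{unitary} and that $R=S=i^\ast\circ m$ are \emph{standard} solutions to the conjugate equations — both from Lemma~\ref{lem:UnitaryInvolution} — I would rewrite this dagger diagrammatically. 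The claim I would establish is that the associated inner product, namely the form \eqref{eq:L2AInnerProduct} on each $\bfA(a)$ (a well-defined sesquilinear form whether or not $\bfA$ is yet known to be tracial), coincides up to a positive scalar with the native inner product \eqref{eq:CInnerProduct} of $\cC$ on $\cC(a,A)$. Concretely, feeding $j_a(f)=\sigma_A^{-1}\circ\overline f$ into \eqref{eq:L2AInnerProduct} and trading the multiplication-with-counit $i^\ast\circ m=R$ for the standard evaluation via Lemma~\ref{lem:RotationRelation}, the unitary $\sigma_A$ recombines the conjugate $\overline f$ with the self-duality data into the genuine adjoint $f^\ast$ of $\cC$. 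Because \eqref{eq:CInnerProduct} is positive definite (as $\cC$ is a C*-category), positivity and faithfulness of the dagger on $\cM_\bfA$ follow, and hence $\cM_\bfA$ is a C*-, equivalently W*-, category.

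The main obstacle I anticipate is precisely this diagrammatic identification: showing that the pieces assembled from the \emph{forgotten-adjoint} data of $\cC^{\natural}$ — the conjugation $\overline{\,\cdot\,}$ together with $\ev$ and $\coev$ — recombine, through the unitary $\sigma_A$, into the honest adjoint of $\cC$. This is where the two nontrivial hypotheses on a Q-system are consumed: unitarity of $\sigma_A$ (so that $\overline{\sigma_A}\circ\sigma_A=\varphi_A$ and conjugation by $\sigma_A$ preserves positivity) and standardness of $R=S$ (so that the trace implicit in \eqref{eq:L2AInnerProduct} is the balanced categorical trace appearing in \eqref{eq:CInnerProduct}). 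A secondary bookkeeping point is to confirm that this assignment is genuinely a dagger functor — contravariant and compatible with the module composition $g\circ_{\cM_\bfA}f=(\id\otimes m)\circ(g\otimes\id_A)\circ f$ — which follows from the monoidal and involutive axioms of $j$ together with associativity of $m$, but is routine compared with the positivity step. As an alternative packaging once positivity is in hand, one may note that $\bfA$ is then tracial and the GNS construction yields a faithful $*$-algebra natural transformation $\lambda\colon\bfA\Rightarrow\bfB(\bfL^2\bfA)$ into the W*-algebra object $\bfB(\bfL^2\bfA)$, exhibiting $\cM_\bfA$ as a dagger subcategory of a C*-category.
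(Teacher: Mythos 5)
Your proposal is correct in outline, but it takes a genuinely different route from the paper. The paper never discusses positivity of inner products in its proof: it realizes $\cM_\bfA$ concretely as the category $(\cM_A, A)$ of free right $A$-modules \emph{inside} $\cC$, with dagger the native adjoint of $\cC$ (closure under adjoints coming from the Frobenius relation via Lemma~\ref{lem:RotationRelation}), concludes W*-ness from the finite-dimensional bicommutant theorem applied to the unital $*$-subalgebras $\cM_A(c\otimes A, c\otimes A)\subseteq \cC(c\otimes A, c\otimes A)$ together with Roberts' $2\times 2$ trick, and then performs the diagrammatic check that the $j$-dagger matches the native adjoint under the free module isomorphism \eqref{eq:FreeModuleIsomorphism}. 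You instead prove positivity of the form \eqref{eq:L2AInnerProduct} by identifying it with the native inner product \eqref{eq:CInnerProduct} on $\cC(a,A)$ --- this is precisely the paper's later Lemma~\ref{lem:H=A}, and you correctly flag and avoid the circularity in its reliance on traciality by working with the right-handed form directly --- and then embed $\cM_\bfA$ into a W*-category via GNS, anticipating Remark~\ref{rem:ExplicitGNS}. The core diagrammatic content is the same in both proofs (unitarity of $\sigma_A$ and standardness of $R=S=i^*\circ m$ converting the $\cC^{\natural}$-data $\overline{f}$ into the honest adjoint $f^*$); the difference is packaging. One caution: your intermediate assertion that positive-definiteness of the sesquilinear forms alone makes the C*-identity ``automatic'' is too quick --- positivity on hom spaces does not by itself give positivity of $f^*\circ f$ in the endomorphism algebras --- so your argument is only complete in its final GNS packaging, where faithfulness of $\lambda:\bfA\Rightarrow \bfB(\bfL^2\bfA)$ exhibits $\cM_\bfA$ as a dagger subcategory with finite-dimensional homs of a W*-category; note that this last step still consumes essentially the same finite-dimensional bicommutant fact the paper uses, so your route is not shorter, but it buys an earlier availability of Lemma~\ref{lem:H=A} and of the explicit GNS description, whereas the paper's route is self-contained at the level of module categories and needs no discussion of states or positivity at all.
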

\begin{proof}
Consider the cyclic $\cC$-module dagger category $(\cM_A, A)$ of free right $A$-modules in $\cC$ with basepoint $A\in \cC$ and dagger structure given by the adjoint in $\cC$.
That is, objects of $\cM_A$ are exactly the $c\otimes A$ for $c\in \cC$, and the morphisms are right $A$-module morphisms.
By Lemma \ref{lem:RotationRelation}, for every $g \in \cM_A(a\otimes A, b\otimes A)$, $g^* \in \cM_A(b\otimes A, a\otimes A)$.

Thus $\cM_A$ is a dagger sub-category of $\cC$ which is closed under finite direct sums.
This means that $\cM_A$ is automatically W* by the finite dimensional bicommutant theorem \cite[Thm.~3.2.1]{JonesVNA} which says a unital $*$-subalgebra of a finite dimensional von Neumann algebra is again a von Neumann algebra.
Indeed, for all $c\in \cC$,
$$
\cM_A(c\otimes A, c\otimes A) \subseteq \cC(c\otimes A, c\otimes A)
$$
is a unital $*$-subalgebra of a finite dimensional von Neumann algebra.
This is sufficient to show $\cM_A$ is W* by Roberts' $2\times 2$ trick \cite{MR808930} (see also \cite[\S2.1 and Prop.~3.26]{1611.04620}).

We claim that $(\bfA, \mu, i, j)$ is exactly the $*$-algebra corresponding to the cyclic $\cC$-module W*-category $(\cM_A,A)$ under \cite[Thm.~3.20]{1611.04620}.
First, the free module functor \cite{MR1936496,MR2863377,MR3578212} gives for all $a\in \cC$ a natural isomorphism
\begin{equation}
\label{eq:FreeModuleIsomorphism}
\bfA(a) = \cC(a, A) \cong \cM_A(a\otimes A, A),
\end{equation}
and this isomorphism is compatible with the multiplication and unit on $\bfA$ and composition and $\id_A$ in $(\cM_A, A)$.
We now show the $*$-structure of $\bfA$ and the dagger structure of $\cM_A$ are compatible.
By \cite[Fig.~4]{MR1936496}, the isomorphism \eqref{eq:FreeModuleIsomorphism} is given for all $a\in \cC$ and $f\in \bfA(a)$ by
\begin{equation}
\label{eq:ExplicitGNS}
\cC(a, A)
\ni f
\mapsto
\begin{tikzpicture}[baseline=-.1cm]
	\draw (-.3,-1) -- (-.3,-.6);
	\draw (.3,-1) -- (.3,1);
	\filldraw (.3,.6) circle (.05cm);
	\draw (-.3,0) .. controls ++(90:.5cm) and ++(-135:.5cm) .. (.3,.6);
	\roundNbox{unshaded}{(-.3,-.3)}{.3}{0}{0}{$f$}
	\node at (.5,-.8) {\scriptsize{$A$}};
	\node at (-.5,-.8) {\scriptsize{$a$}};
	\node at (-.5,.2) {\scriptsize{$A$}};
	\node at (.5,.8) {\scriptsize{$A$}};
\end{tikzpicture}
\in \cM_A(a\otimes A, A).
\end{equation}
By \cite[Thm.~3.20]{1611.04620}, the $*$-structure on the algebra corresponding to $(\cM_A, A)$ is given by 
$$
\begin{tikzpicture}[baseline=-.1cm, yscale=-1]
	\draw (-.3,-.6) arc (0:-180:.3cm) -- (-.9,1);
	\draw (.3,-1) -- (.3,1);
	\filldraw (.3,.6) circle (.05cm);
	\draw (-.3,0) .. controls ++(90:.5cm) and ++(-135:.5cm) .. (.3,.6);
	\roundNbox{unshaded}{(-.3,-.3)}{.3}{0}{0}{$f^*$}
	\node at (.5,-.8) {\scriptsize{$A$}};
	\node at (-.1,-.8) {\scriptsize{$a$}};
	\node at (-.5,.2) {\scriptsize{$A$}};
	\node at (.5,.8) {\scriptsize{$A$}};
	\node at (-1.1,-.3) {\scriptsize{$\overline{a}$}};
\end{tikzpicture}
=
\begin{tikzpicture}[baseline=-.1cm]
	\draw (-.3,.6) arc (0:180:.3cm) -- (-.9,-1);
	\draw (1.2,-1) -- (1.2,1.6);
	\filldraw (1.2,1.2) circle (.05cm);
	\draw (-.3,0) arc (-180:0:.4cm);
	\draw (.5,.6) .. controls ++(90:.5cm) and ++(-135:.5cm) .. (1.2,1.2);
	\roundNbox{unshaded}{(-.3,.3)}{.3}{0}{0}{$f^*$}
	\roundNbox{unshaded}{(.5,.3)}{.3}{.1}{.1}{$\sigma_A^{-1}$}
	\node at (-.1,.8) {\scriptsize{$a$}};
	\node at (-.5,-.2) {\scriptsize{$A$}};
	\node at (.7,-.2) {\scriptsize{$\overline{A}$}};
	\node at (1.4,1.4) {\scriptsize{$A$}};
	\node at (-1.1,.3) {\scriptsize{$\overline{a}$}};
\end{tikzpicture}
=
\begin{tikzpicture}[baseline=.4cm]
	\draw (-.3,-1) -- (-.3,.4);
	\draw (.3,-1) -- (.3,2);
	\filldraw (.3,1.6) circle (.05cm);
	\draw (-.3,1) .. controls ++(90:.5cm) and ++(-135:.5cm) .. (.3,1.6);
	\roundNbox{unshaded}{(-.3,-.3)}{.3}{0}{0}{$\overline{f}$}
	\roundNbox{unshaded}{(-.3,.7)}{.3}{.1}{.1}{$\sigma_A^{-1}$}
	\node at (.5,-.8) {\scriptsize{$A$}};
	\node at (-.5,-.8) {\scriptsize{$\overline{a}$}};
	\node at (-.5,.2) {\scriptsize{$A$}};
	\node at (.5,1.8) {\scriptsize{$A$}};
\end{tikzpicture}\,
$$
which is exactly the image of $j_a(f)$ under the isomorphism \eqref{eq:FreeModuleIsomorphism}.
We are now finished by the correspondence between cyclic $\cC$-module W*-categories and W*-algebra objects \cite[Thm.~3.24]{1611.04620}.
\end{proof}

Recall that we define $\bfL^2\bfA\in \Hilb(\cC)$ by $\bfL^2\bfA(a) = \bfA(a)$ with inner product \eqref{eq:L2AInnerProduct}.
Now $\bfL^2\bfA\in \Hilb(\cC)$ is compact, and thus there is an $H\in \cC$ such that $\bfL^2\bfA(a) = \cC(a, H)$ for all $a\in \cC$.

\begin{lem}
\label{lem:H=A}
The object $H\in \cC$ is canonically unitarily isomorphic to $A$.
\end{lem}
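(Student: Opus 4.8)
The plan is to prove the statement inside $\Hilb(\cC)$ and then descend to $\cC$ along the fully faithful bi-involutive embedding $\cC\hookrightarrow\Hilb(\cC)$. Write $\underline A\in\Hilb(\cC)$ for the image of the Q-system object $A$ under this embedding, so that $\underline A(a)=\cC(a,A)$ with the inner product \eqref{eq:CInnerProduct}. By construction $\bfL^2\bfA$ has the very same underlying functor $a\mapsto\cC(a,A)$ but carries instead the inner product \eqref{eq:L2AInnerProduct}, and compactness of $\bfL^2\bfA$ says precisely that $\bfL^2\bfA=\underline H$ for the object $H$ in the statement. Hence it is enough to exhibit a canonical unitary $\bfL^2\bfA\cong\underline A$ in $\Hilb(\cC)$: since the embedding is a dagger functor which is full and faithful, it identifies $\cC(H,A)$ with $\Hilb(\cC)(\underline H,\underline A)$ compatibly with $*$, so a unitary $\underline H\to\underline A$ is the image of a unitary $H\to A$ in $\cC$.

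Because $\bfL^2\bfA$ and $\underline A$ share the underlying $\Vec(\cC)$-object $\cC(\cdot,A)$, the identity natural transformation is a genuine morphism $\iota:\bfL^2\bfA\to\underline A$ in $\Hilb(\cC)$ (it is bounded, both objects being compact, so all the relevant hom spaces are finite dimensional), and it is invertible. As $\Hilb(\cC)$ is a W*-category it admits polar decomposition, and $\iota$ being invertible forces its polar part $u:\bfL^2\bfA\to\underline A$ to be unitary. Composing $u$ with the identification $\underline H=\bfL^2\bfA$ coming from compactness produces the desired canonical unitary $\underline H\cong\underline A$, hence a canonical unitary $H\cong A$ in $\cC$. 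Unwinding, the positive part $(\iota^*\iota)^{1/2}$ is the $\Hilb(\cC)$-avatar of $\rho^{1/2}$, where $\rho=w^*w\in\End_\cC(A)$ is the positive invertible operator comparing \eqref{eq:L2AInnerProduct} to \eqref{eq:CInnerProduct} and $w\in\cC(A,H)$ is the isomorphism produced by Yoneda from the equality of underlying functors.

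The genuine input here is upstream: it is the Q-system hypotheses that earlier guarantee $\bfL^2\bfA$ is a compact, tracial Hilbert-space object at all, and given that, the argument above is soft. The only point to check is that $\iota$ really is an isomorphism of the dagger category $\Hilb(\cC)$ — equivalently that \eqref{eq:L2AInnerProduct} is a legitimate Hilbert-space-object structure on $\cC(\cdot,A)$ (already invoked when forming $\bfL^2\bfA$) and that the embedding reflects unitaries. If one prefers the sharper, on-the-nose conclusion that $w$ is itself unitary, i.e.\ $\rho=\id_A$, then the work becomes a single graphical identity, namely \eqref{eq:L2AInnerProduct} $=$ \eqref{eq:CInnerProduct} on every $\cC(a,A)$. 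I expect this equality to be the real obstacle in the concrete approach; proving it uses the normalization $i^*\circ i=\id_{1_\cC}$, the standardness of $R=S=i_A^*\circ m_A$ together with the consequent $m_A\circ m_A^*=d_A\,\id_A$, the unitarity of $\sigma_A$ (so that $\tilde\ev_A=\ev_A\circ(\sigma_A\otimes\id_A)$ is a standard self-dual pairing and the trace defining \eqref{eq:L2AInnerProduct} collapses to the categorical trace), and the unital axiom $j_{1_\cC}(i)=i$; the component $a=1_\cC$ already yields $\langle i,i\rangle=1$ for both inner products as a consistency check.
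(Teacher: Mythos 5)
Your argument is correct for the statement as literally written, but it takes a genuinely different and strictly weaker route than the paper. The paper's proof is a short direct computation: using standardness of $R=S=i_A^*\circ m_A$ (Lemma \ref{lem:UnitaryInvolution}), reality of $A$ (Remark \ref{rem:RealObject}, which lets one trade the left trace for the right trace), \eqref{eq:BarStar}, and the normalization $i^*\circ i=\id_{1_\cC}$, it shows the inner products \eqref{eq:CInnerProduct} and \eqref{eq:L2AInnerProduct} on $\cC(a,A)$ \emph{coincide} for every $a$, so the Yoneda identification itself is the unitary and $H=A$ on the nose (your $\rho=w^*w$ equals $\id_A$). Your softer argument — the identity natural transformation is an invertible morphism $\iota$ in the W*-category $\Hilb(\cC)$, take its polar part, and descend along the fully faithful dagger embedding — is sound (boundedness of $\iota$ is fine for compact objects, polar decomposition of an invertible morphism does yield a unitary, and the embedding reflects unitaries), and it buys the lemma with essentially no computation. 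What it fails to deliver is precisely what the paper uses downstream: Remark \ref{rem:ExplicitGNS}, the explicit GNS formula \eqref{eq:ExplicitGNS}, and the verification $\zeta_A^*\circ\zeta_A=\id_A$ in Section \ref{sec:QtoWtoQ} all compute adjoints of $\bfL^2\bfA$-morphisms by the plain $\cC$-adjoint on $\cC(\cdot,A)$, which is legitimate only because $\rho=\id_A$; with your $u=\iota\rho^{-1/2}$ alone, these formulas would pick up $\rho^{\pm1/2}$ corrections. So the ``sharper, on-the-nose'' identity you defer as the expected obstacle is not optional here — it \emph{is} the paper's proof, and it is a four-step graphical calculation rather than a serious obstruction. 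Your ingredient list for it is essentially right (standardness of $R=S=i_A^*\circ m_A$, unitarity of $\sigma_A$, the unit normalization), with the small correction that $m_A\circ m_A^*=d_A\,\id_A$ is not needed for this lemma (it enters only later, in the unitarity of $\zeta_A$); the actual mechanism is reality of $A$ plus \eqref{eq:BarStar}, which converts the categorical trace pairing $\langle f|g\rangle_{\cC(a,A)}$ into the pairing of $g$ against $j_a(f)=\sigma_A^{-1}\circ\overline{f}$ defining \eqref{eq:L2AInnerProduct}.
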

\begin{proof}
By injectivity of the Yoneda embedding $\cC \hookrightarrow \Hilb(\cC)$, it suffices to show that for all $a\in \cC$, the inner product \eqref{eq:CInnerProduct} on $\cC(a,A)$ 
and the inner product \eqref{eq:L2AInnerProduct} on 
$$
\cC(a, H)=\bfH(a)=\bfL^2\bfA(a)=\bfA(a) = \cC^{\natural}(a, A)
$$ 
agree.
Indeed, for $f,g\in \cC(a, A)$, by Lemma \ref{lem:UnitaryInvolution} and Remark \ref{rem:RealObject} together with \eqref{eq:BarStar}, we have
$$
\langle f | g\rangle_{\cC(a,A)}
=
\begin{tikzpicture}[baseline=-.1cm]
	\draw (0,.8) arc (180:0:.3cm) -- (.6,-.8) arc (0:-180:.3cm) -- (0,.8);
	\roundNbox{unshaded}{(0,.5)}{.3}{0}{0}{$f^*$}
	\roundNbox{unshaded}{(0,-.5)}{.3}{0}{0}{$g$}
	\node at (-.2,-1) {\scriptsize{$a$}};
	\node at (-.2,0) {\scriptsize{$A$}};
	\node at (-.2,1) {\scriptsize{$a$}};
	\node at (.8,0) {\scriptsize{$\overline{a}$}};
\end{tikzpicture}
=
\begin{tikzpicture}[baseline=-.1cm, xscale=-1]
	\draw (0,.8) arc (180:0:.3cm) -- (.6,-.8) arc (0:-180:.3cm) -- (0,.8);
	\roundNbox{unshaded}{(0,.5)}{.3}{0}{0}{$f^*$}
	\roundNbox{unshaded}{(0,-.5)}{.3}{0}{0}{$g$}
	\node at (-.2,-1) {\scriptsize{$a$}};
	\node at (-.2,0) {\scriptsize{$A$}};
	\node at (-.2,1) {\scriptsize{$a$}};
	\node at (.8,0) {\scriptsize{$\overline{a}$}};
\end{tikzpicture}
=
\begin{tikzpicture}[baseline = -.1cm, xscale=-1]
    \draw (-.5,-.3) arc (-180:0:.5cm);
    \draw (-.5,.3) arc (180:0:.5cm);
    \roundNbox{unshaded}{(-.5,0)}{.3}{0}{0}{$g$}
    \roundNbox{unshaded}{(.5,0)}{.3}{0}{0}{$\overline{f}$}
    \node at (-.65,-.5) {\scriptsize{$a$}};
    \node at (-.65,.5) {\scriptsize{$A$}};
    \node at (.65,-.5) {\scriptsize{$\overline{a}$}};
    \node at (.65,.5) {\scriptsize{$A$}};
\end{tikzpicture}
=
\begin{tikzpicture}[baseline = -.1cm, xscale=-1]
    \draw (-.5,-.3) arc (-180:0:.5cm);
    \draw (-.5,.3) arc (180:0:.5cm);
    \filldraw (0,.8) circle (.05cm);
    \filldraw (0,1.2) circle (.05cm) node [right] {\scriptsize{$i^*$}};
    \draw (0,.8) -- (0,1.2);
    \roundNbox{unshaded}{(-.5,0)}{.3}{0}{0}{$g$}
    \roundNbox{unshaded}{(.5,0)}{.3}{.2}{.2}{$j_a(f)$}
    \node at (-.65,-.5) {\scriptsize{$\mathbf{a}$}};
    \node at (-.65,.5) {\scriptsize{$\bfA$}};
    \node at (.65,-.5) {\scriptsize{$\overline{\mathbf{a}}$}};
    \node at (.65,.5) {\scriptsize{$\bfA$}};
    \node at (.15,1) {\scriptsize{$\bfA$}};
\end{tikzpicture}
=
\langle f|g\rangle_a
(i^*\circ i)
=
\langle f|g\rangle_a
$$
where $i^*\circ i = \id_{1_\cC}$ as in Definition \ref{defn:QSystem} since $A$ is normalized.
\end{proof}

\begin{rem}
\label{rem:ExplicitGNS}
By Lemma \ref{lem:H=A}, we see that the cyclic $\cC$-module W*-category $\cM_A$ with basepoint $A$ is equivalent to the cyclic $\cC$-module W*-subcategory of $\cC$ generated by $A$, which in turn is equivalent to the cyclic $\cC$-module W*-subcategory of $\Hilb(\cC)$ generated by $\bfL^2\bfA$, which corresponds to the W*-algebra object $\bfB(\bfL^2\bfA)$ by \cite[Ex.~3.35]{1611.04620}.
Thus \eqref{eq:ExplicitGNS} provides an explicit description of the canonical GNS faithful $*$-algebra natural transformation $\lambda: \bfA\Rightarrow \bfB(\bfL^2\bfA)$.
\end{rem}

\subsection{Morphisms}

Suppose $\theta\in\cC( A , B)$ is an involutive algebra morphism as in Section \ref{sec:Q-systemMorphisms}.
We now define a $*$-algebra natural transformation $\WStar(\theta): \WStar(A) \Rightarrow \WStar(B)$.
We use the notation $\WStar(A)=(\bfA, \mu^\bfA, i_\bfA, j^\bfA)$ and recall $\bfA(a) = \cC^{\natural}(a, A)$.
We have similar notation for $\WStar(B)$.

\begin{defn}
Given an involutive algebra morphism $\theta\in\cC( A , B)$, we define $\WStar(\theta): \bfA \Rightarrow \bfB$ by defining the $a$-component $\WStar(\theta)_a$ for $a\in\cC$ by 
$$
\cC^{\natural}(a, A)=\bfA(a)
\ni f
\mapsto
\theta \circ f 
\in 
\bfB(a)
=
\cC^{\natural}(a, B).
$$
Note that $\WStar(\theta)$ is automatically a natural transformation.
That $\theta$ is an algebra morphism implies $\WStar(\theta)$ is an algebra natural transformation, and involutivity of $\theta$ implies $\WStar(\theta)$ is a $*$-algebra natural transformation.
Indeed, for $f\in \bfA(a)$,
\begin{align*}
j^\bfB_a(\WStar(\theta)_a(f)) 
&=
j^\bfB_a(\theta \circ f)
=
\sigma_B^{-1} \circ \overline{\theta \circ f}
=
\sigma_B^{-1} \circ \overline{\theta} \circ \overline{f}
\\&=
\theta \circ \sigma_A^{-1} \circ \overline{f}
=
\theta \circ j_a^\bfA(f)
=
\WStar(\theta)_{\overline{a}}(j_a^\bfA(f)).
\end{align*}
\end{defn}

\begin{prop} 
$\bfW$\emph{*} is a functor.
\end{prop}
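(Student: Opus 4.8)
The plan is to check the two functoriality axioms directly from the pointwise formula $\WStar(\theta)_a(f) = \theta \circ f$ on $\bfA(a) = \cC^{\natural}(a,A)$, having already established in the preceding definition that each $\WStar(\theta)$ is a unital $*$-algebra natural transformation between compact connected W*-algebra objects. Preservation of identities is immediate: for $f \in \bfA(a)$ we have $\WStar(\id_A)_a(f) = \id_A \circ f = f$, so $\WStar(\id_A)$ is the identity natural transformation on $\WStar(A)$.

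The substantive point is that the source of the equivalence is genuinely a category, i.e.\ that involutive algebra morphisms are closed under composition, so that $\WStar$ applied to a composite is even defined. Given involutive algebra morphisms $\theta_1 \in \cC(A,B)$ and $\theta_2 \in \cC(B,C)$, the composite $\theta_2 \circ \theta_1$ is an algebra morphism by associativity of composition, functoriality of $\otimes$, and the defining identities $m_B \circ (\theta_1 \otimes \theta_1) = \theta_1 \circ m_A$ and $\theta_1 \circ i_A = i_B$ (and likewise for $\theta_2$). To see that $\theta_2 \circ \theta_1$ is involutive, I would chase the two involutivity equations $\sigma_B \circ \theta_1 = \overline{\theta_1} \circ \sigma_A$ and $\sigma_C \circ \theta_2 = \overline{\theta_2} \circ \sigma_B$ through the covariant functoriality of the conjugation $\overline{\,\cdot\,}$ from the bi-involutive structure:
\[
\sigma_C \circ (\theta_2 \circ \theta_1) = \overline{\theta_2} \circ \sigma_B \circ \theta_1 = \overline{\theta_2} \circ \overline{\theta_1} \circ \sigma_A = \overline{\theta_2 \circ \theta_1} \circ \sigma_A.
\]

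With closure in hand, $\WStar(\theta_2 \circ \theta_1)$ is a well-defined $*$-algebra natural transformation, and preservation of composition follows pointwise from associativity of composition in $\cC^{\natural}$: for $f \in \bfA(a)$,
\[
\WStar(\theta_2 \circ \theta_1)_a(f) = (\theta_2 \circ \theta_1) \circ f = \theta_2 \circ (\theta_1 \circ f) = \WStar(\theta_2)_a\big(\WStar(\theta_1)_a(f)\big).
\]
I expect no real obstacle: the only step requiring any care is the closure of involutive morphisms under composition, and even that reduces to the one-line diagram chase above, whose essential input is simply that $\overline{\,\cdot\,}$ is covariant on morphisms, so that $\overline{\theta_2} \circ \overline{\theta_1} = \overline{\theta_2 \circ \theta_1}$. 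Everything else is formal, since $\WStar$ acts on morphisms by left composition and composition in $\cC^{\natural}$ is strictly associative.
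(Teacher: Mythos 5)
Your proof is correct and takes essentially the same route as the paper: both arguments verify preservation of identities and composites pointwise from the formula $\WStar(\theta)_a(f) = \theta \circ f$, using nothing beyond associativity of composition in $\cC^{\natural}$. Your extra verification that involutive algebra morphisms are closed under composition (via covariance of the conjugation, giving $\overline{\theta_2 \circ \theta_1} = \overline{\theta_2} \circ \overline{\theta_1}$, hence $\sigma_C \circ \theta_2 \circ \theta_1 = \overline{\theta_2 \circ \theta_1} \circ \sigma_A$) is also correct and is a routine check the paper leaves implicit.
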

\begin{proof}
We must prove that $\WStar$ preserves identities and composites.
If $\theta =\id_A \in \cC(A,A)$, it is obvious that $\WStar(\theta) = \id_{\bfA}$.
Moreover, if we have $\theta_1\in \cC(A, B)$ and $\theta_2\in \cC(B, C)$, 
then for all $a\in\cC$ and $f\in \bfA(a)$, we have
$$
\WStar(\theta_1\circ \theta_2)_a(f)
=
\theta_1\circ \theta_2 \circ f
=
(\WStar(\theta_1)\circ \WStar(\theta_2))_a(f).
$$
We are finished.
\end{proof}

\section{From W*-algebra objects to Q-systems}
\label{sec:WtoQ}

\subsection{Objects}

Suppose $\bfA \in \Vec(\cC)$ is a compact connected W*-algebra object, and define $\bfH:=\bfL^2\bfA \in \Hilb(\cC)$, the Hilbert space object defined by $\bfH(a)=\bfL^2\bfA(a) = \bfA(a)$ with inner product \eqref{eq:L2AInnerProduct}.
Let $H\in \cC$ be the corresponding object satisfying $\bfH(a) = \cC(a,H)$ for all $a\in \cC$.
Recall $\bfA$ has a canonical state corresponding to the state on $\bfA(1_\cC) \cong \bbC$ via $i_A \mapsto 1_\bbC$.
The GNS representation gives a canonical faithful $*$-algebra natural transformation $\lambda: \bfA \Rightarrow \bfB(\bfL^2\bfA)$.

Now since $H \in \cC$, we get a canonical compact W*-algebra object $\bfH\otimes \overline{\bfH} : (\cC^\natural)^{\op} \to \Vec$ by $(\bfH\otimes \overline{\bfH})(a) = \cC(a, H\otimes \overline{H})$ as in Examples \ref{ex:StarAlgebraFromInnerHom} and \ref{ex:W*AlgebraFromInnerHom}, which is isomorphic to the compact W*-algebra object $\bfB(\bfH)\in \Vec(\cC)$.

\begin{defn}
\label{defn:InducedRepresentation}
Define $\pi: \bfA \Rightarrow \bfH\otimes \overline{\bfH}$ by 
$$
\bfA(a) \ni f \overset{\pi_a}{\longmapsto} 
d_H^{-1/2}
\begin{tikzpicture}[baseline=-.1cm]
	\draw (0,0) -- (0,.7);
	\draw (-.3,-.3) -- (-.3,-.7);
	\draw (.3,-.3) arc (-180:0:.3cm) -- (.9,.7);
	\roundNbox{unshaded}{(0,0)}{.3}{.3}{.3}{$\lambda_a(f)$}
	\node at (-.5,-.5) {\scriptsize{$\mathbf{a}$}};
	\node at (-.2,.5) {\scriptsize{$\bfH$}};
	\node at (1.1,0) {\scriptsize{$\overline{\bfH}$}};
\end{tikzpicture}
\in \Hom_{\Hilb(\cC)}(\mathbf{a},\bfH\otimes \overline{\bfH}) \cong \cC(a, H\otimes \overline{H})= (\bfH\otimes \overline{\bfH})(a).
$$
In particular, we have $\pi_{1_\cC}(i_\bfA) = d_H^{-1/2}\coev_H$ since $\lambda$ is unital.

It is easy to see that $\pi: \bfA \Rightarrow \bfH\otimes \overline{\bfH}$ is the faithful $*$-algebra natural transformation corresponding to $\lambda : \bfA \Rightarrow \bfB(\bfH)$ under the $*$-algebra natural isomorphism $\bfB(\bfH)\cong \bfH\otimes \overline{\bfH}$.
\end{defn}

\begin{nota}
For each $a\in \Irr(\cC)$, suppose we have $f,f'\in \bfA(a)$, and define $\alpha = \pi_a(f)$ and $\alpha'= \pi_a(f')$.
We then have the following identities from \eqref{eq:InnerProductInHilbC}:
\begin{equation}
\label{eq:IdentitiesForA(a)}
\begin{tikzpicture}[baseline=-.1cm]
	\draw (-.3,-.9) -- (-.3,-.6) arc (180:0:.3cm);
	\draw (-.3,.9) -- (-.3,.6) arc (-180:0:.3cm);
	\draw (0,-.3) -- (0,.3);
	\draw (.3,.6) arc (180:0:.3cm) -- (.9,-.6) arc (0:-180:.3cm);
	\filldraw[fill = white] (0,.3) circle (.05cm) node [above] {\scriptsize{$\alpha'$}};
	\filldraw[fill = white] (0,-.3) circle (.05cm) node [below] {\scriptsize{$\alpha^*$}};
	\node at (-.3,-1.1) {\scriptsize{$H$}};
	\node at (-.3,1.1) {\scriptsize{$H$}};
	\node at (.5,-.6) {\scriptsize{$\overline{H}$}};
	\node at (.5,.6) {\scriptsize{$\overline{H}$}};
	\node at (1.1,0) {\scriptsize{$H$}};
	\node at (-.2,0) {\scriptsize{$a$}};
\end{tikzpicture}
=
d_H^{-1}
\langle f | f'\rangle_a \id_H
\qquad
\begin{tikzpicture}[baseline=-.1cm]
	\draw (0,0) circle (.4cm);
	\draw (0,-.4) arc (-180:0:.6cm) -- (1.2,.4) arc (0:180:.6cm);
	\filldraw[fill = white] (0,.4) circle (.05cm) node [below] {\scriptsize{$\alpha^*$}};
	\filldraw[fill = white] (0,-.4) circle (.05cm) node [above] {\scriptsize{$\alpha'$}};
	\node at (-.6,0) {\scriptsize{$H$}};
	\node at (.6,0) {\scriptsize{$\overline{H}$}};
	\node at (-.2,.6) {\scriptsize{$a$}};
	\node at (-.2,-.6) {\scriptsize{$a$}};
	\node at (1,0) {\scriptsize{$\overline{a}$}};
\end{tikzpicture}
=
\langle f|f'\rangle_a
\qquad
\begin{tikzpicture}[baseline=-.1cm]
	\draw (0,0) circle (.4cm);
	\draw (0,-.4)-- (0,-.8);
	\draw (0,.4)-- (0,.8);
	\filldraw[fill = white] (0,.4) circle (.05cm) node [below] {\scriptsize{$\alpha^*$}};
	\filldraw[fill = white] (0,-.4) circle (.05cm) node [above] {\scriptsize{$\alpha'$}};
	\node at (-.6,0) {\scriptsize{$H$}};
	\node at (.6,0) {\scriptsize{$\overline{H}$}};
	\node at (-.2,.6) {\scriptsize{$a$}};
	\node at (-.2,-.6) {\scriptsize{$a$}};
\end{tikzpicture}
=
d_a^{-1} \langle f|f'\rangle_a \id_a
\end{equation}
For each $a\in \Irr(\cC)$, pick an orthonormal basis $\ONB(\bfA(a))$ of $\bfA(a)$ under the inner product \eqref{eq:L2AInnerProduct}, and define
$$
\cB_a^\bfA
=
\set{d_a^{1/2}\pi_a(f)}{ f\in \ONB(\bfA(a))}.
$$
We will suppress the superscript and merely write $\cB_a$ when no confusion can arise.
By convention, we pick $\ONB(\bfA(a)) = \{i_\bfA\}$, and thus $\cB_{1_\cC} = \{ d_H^{-1/2} \coev_H\}$.
This normalization means that for $\alpha, \alpha' \in \cB_a$, we have 
\begin{equation}
\label{eqn:OpenInnerProduct}
\begin{tikzpicture}[baseline=-.1cm]
	\draw (0,0) circle (.4cm);
	\draw (0,-.4)-- (0,-.8);
	\draw (0,.4)-- (0,.8);
	\filldraw[fill = white] (0,.4) circle (.05cm) node [below] {\scriptsize{$\alpha^*$}};
	\filldraw[fill = white] (0,-.4) circle (.05cm) node [above] {\scriptsize{$\alpha'$}};
	\node at (-.6,0) {\scriptsize{$H$}};
	\node at (.6,0) {\scriptsize{$\overline{H}$}};
	\node at (-.2,.6) {\scriptsize{$a$}};
	\node at (-.2,-.6) {\scriptsize{$a$}};
\end{tikzpicture}
=
\delta_{\alpha=\alpha'} \id_a.
\end{equation}
Thus $\cB_a$ is an orthonormal basis for $\pi_a(\bfA(a))$ under the new inner product \eqref{eqn:OpenInnerProduct}.
\end{nota}

\begin{defn}
We define the following distinguished element of $\cC(H\otimes \overline{H}, H\otimes \overline{H})$:
$$
p = 
\sum_{a\in \Irr(\cC)} \sum_{\alpha \in \cB_a}
\begin{tikzpicture}[baseline=-.1cm]
	\draw (-.3,-.6) arc (180:0:.3cm);
	\draw (-.3,.6) arc (-180:0:.3cm);
	\draw (0,-.3) -- (0,.3);
	\filldraw[fill = white] (0,.3) circle (.05cm) node [above] {\scriptsize{$\alpha$}};
	\filldraw[fill = white] (0,-.3) circle (.05cm) node [below] {\scriptsize{$\alpha^*$}};
	\node at (-.3,-.8) {\scriptsize{$H$}};
	\node at (-.3,.8) {\scriptsize{$H$}};
	\node at (.3,-.8) {\scriptsize{$\overline{H}$}};
	\node at (.3,.8) {\scriptsize{$\overline{H}$}};
	\node at (-.2,0) {\scriptsize{$a$}};
\end{tikzpicture}
.
$$
Note that the element $p$ is independent of the choices of orthonormal bases $\cB_a$ for $a\in \Irr(\cC)$ under the inner product \eqref{eqn:OpenInnerProduct}.
\end{defn}

\begin{lem}
\label{lem:SymmetricallySelfDualProjector}
The element $p$ is a symmetrically self-dual orthogonal projector.
That is, $p^2=p^*=p$, and 
$
(p \otimes \id_{H\otimes \overline{H}})\circ \coev_{H\otimes \overline{H}}
=
(\id_{H\otimes \overline{H}}\otimes p) \circ \coev_{H\otimes \overline{H}}.
$
\end{lem}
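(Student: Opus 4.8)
The plan is to check the three asserted properties of $p=\sum_{a\in\Irr(\cC)}\sum_{\alpha\in\cB_a}\alpha\circ\alpha^*$ in turn, disposing of self-adjointness and idempotency quickly and then concentrating on the symmetric self-duality. \emph{Self-adjointness} is immediate, since each summand satisfies $(\alpha\circ\alpha^*)^*=\alpha\circ\alpha^*$, whence $p^*=p$. For \emph{idempotency} I would expand $p^2=\sum_{a,\alpha}\sum_{b,\beta}\alpha\circ(\alpha^*\circ\beta)\circ\beta^*$ and observe that the inner morphism $\alpha^*\circ\beta\in\cC(b,a)$ vanishes unless $a\cong b$ by simplicity of $a,b$; as $a,b$ range over representatives in $\Irr(\cC)$, only the terms with $a=b$ survive, and there \eqref{eqn:OpenInnerProduct} gives $\alpha^*\circ\beta=\delta_{\alpha=\beta}\id_a$. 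Hence $p^2=\sum_{a,\alpha}\alpha\circ\alpha^*=p$.

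For the \emph{symmetric self-duality}, recall that $X:=H\otimes\overline{H}$ carries the symmetric self-duality of Example \ref{ex:InnerHom}, and that the asserted identity $(p\otimes\id_X)\circ\coev_X=(\id_X\otimes p)\circ\coev_X$ is precisely the statement that $p$ is fixed by the associated transpose $\rho:\End_\cC(X)\to\End_\cC(X)$, $\rho(x)=(\ev_X\otimes\id_X)\circ(\id_X\otimes x\otimes\id_X)\circ(\id_X\otimes\coev_X)$. I would compute $\rho$ on a single summand: transposing $\alpha\circ\alpha^*$ rotates both boundary legs through the self-dualities of $X$ and of the simple object $a$, producing $\rho(\alpha\circ\alpha^*)=\tilde\alpha\circ\tilde\alpha^*$ with $\tilde\alpha:=(\alpha^*)^\vee\in\cC(\overline{a},X)$, where one uses $(\cdot)^\vee=\overline{(\cdot)}^*$ from \eqref{eq:BarStar} to identify $\alpha^\vee$ with $\tilde\alpha^*$.

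The heart of the argument is then the claim that, for $\alpha=d_a^{1/2}\pi_a(f)$ with $f$ in an orthonormal basis of $\bfA(a)$, the rotated morphism is $\tilde\alpha=d_a^{1/2}\pi_{\overline{a}}(j_a(f))$; equivalently $(\pi_a(f)^*)^\vee=\pi_{\overline{a}}(j_a(f))$. Granting this, since $j_a:\bfA(a)\to\bfA(\overline{a})$ is a conjugate-linear isometry for the inner product \eqref{eq:L2AInnerProduct} (as $\bfA$ is tracial) and $d_{\overline{a}}=d_a$, the family $\{\tilde\alpha\}_{\alpha\in\cB_a}$ is a legitimate choice of orthonormal basis $\cB_{\overline{a}}$. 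By the noted independence of $p$ from the choice of bases, $\sum_{\alpha\in\cB_a}\tilde\alpha\circ\tilde\alpha^*=\sum_{\beta\in\cB_{\overline{a}}}\beta\circ\beta^*$, and summing over $a\in\Irr(\cC)$ — using that $a\mapsto\overline{a}$ permutes $\Irr(\cC)$ — yields $\rho(p)=p$, which is exactly the symmetric self-duality.

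The main obstacle is verifying the rotation claim $(\pi_a(f)^*)^\vee=\pi_{\overline{a}}(j_a(f))$. Here I would unfold the explicit description of $\pi_a$ from Definition \ref{defn:InducedRepresentation}, use traciality in the form \eqref{eq:InnerProductInHilbC} to slide the cap coming from the self-duality past $\lambda_a(f)$, and invoke the compatibility of the $*$-structure $j$ with conjugation (the real structure on $\bfA$) to recognize the outcome as $\pi_{\overline{a}}(j_a(f))$. Tracking the normalizing scalars $d_a=d_{\overline{a}}$ and $d_H$ through this computation, together with the appeal to traciality to identify the rotated isometry, is the delicate bookkeeping; once it is in place, the remaining steps are purely formal.
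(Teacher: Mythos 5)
Your proposal is correct and follows essentially the same route as the paper: the paper also disposes of $p^*=p$ and $p^2=p$ via \eqref{eqn:OpenInnerProduct}, then establishes the self-duality by bending the diagram and observing that the conjugated family $\set{\overline{\alpha}\circ\gamma_a}{\alpha\in\cB_a}$ (with $\gamma_a\in\cC(a',\overline{a})$ a unitary to the representative $a'\in\Irr(\cC)$, a bookkeeping point your ``$a\mapsto\overline{a}$ permutes $\Irr(\cC)$'' glosses over) is again an orthonormal basis for $\pi_{a'}(\bfA(a'))$, so base-independence of $p$ finishes the argument. Your key rotation claim $(\pi_a(f)^*)^\vee=\pi_{\overline{a}}(j_a(f))$ is exactly the content the paper leaves implicit here and verifies later in the proof of Theorem \ref{thm:W*toQandBack}.
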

\begin{proof}
It is obvious that $p^*=p$, and it follows readily from \eqref{eqn:OpenInnerProduct} that $p^2=p$.
To verify the final condition, for each $a\in \Irr(\cC)$, there is a unique $a'\in \Irr(\cC)$ with $a'\cong \overline{a}$.
We pick a unitary isomorphism $\gamma_a \in \cC(a',\overline{a})$, and we see that
$$
(p \otimes \id_{H\otimes \overline{H}})\circ \coev_{H\otimes \overline{H}}
=
\sum_{a\in \Irr(\cC)} \sum_{\alpha \in \cB_a}
\begin{tikzpicture}[baseline=.3cm]
	\draw (-.9,.6) arc (-180:0:.3cm);
	\draw (.3,.6) arc (-180:0:.3cm);
	\draw (-.6,.3) arc (-180:0:.6cm);
	\filldraw[fill = white] (-.6,.3) circle (.05cm) node [above] {\scriptsize{$\alpha$}};
	\filldraw[fill = white] (.6,.3) circle (.05cm) node [above] {\scriptsize{$\overline{\alpha}$}};
	\node at (-.9,.8) {\scriptsize{$H$}};
	\node at (-.3,.8) {\scriptsize{$\overline{H}$}};
	\node at (.9,.8) {\scriptsize{$\overline{H}$}};
	\node at (.3,.8) {\scriptsize{$H$}};
	\node at (-.8,.1) {\scriptsize{$a$}};
	\node at (.8,.1) {\scriptsize{$\overline{a}$}};
\end{tikzpicture}
=
\sum_{a\in \Irr(\cC)} \sum_{\alpha \in \cB_{a}}
\begin{tikzpicture}[baseline=-.1cm]
	\draw (-.9,.6) arc (-180:0:.3cm);
	\draw (.3,.6) arc (-180:0:.3cm);
	\draw (-.6,.3) -- (-.6,-.55) arc (-180:0:.6cm) -- (.6,.3);
	\filldraw[fill = white] (.6,.3) circle (.05cm) node [above] {\scriptsize{$\overline{\alpha}$}};
	\filldraw[fill = white] (-.6,.3) circle (.05cm) node [above] {\scriptsize{$\alpha$}};
	\roundNbox{unshaded}{(-.6,-.3)}{.25}{0}{0}{$\overline{\gamma_a}$}
	\roundNbox{unshaded}{(.6,-.3)}{.25}{0}{0}{$\gamma_a$}
	\node at (-.9,.8) {\scriptsize{$H$}};
	\node at (-.3,.8) {\scriptsize{$\overline{H}$}};
	\node at (.9,.8) {\scriptsize{$\overline{H}$}};
	\node at (.3,.8) {\scriptsize{$H$}};
	\node at (-.8,.1) {\scriptsize{$a$}};
	\node at (.8,.1) {\scriptsize{$\overline{a}$}};
	\node at (-.8,.-.8) {\scriptsize{$\overline{a'}$}};
	\node at (.8,-.8) {\scriptsize{$a'$}};
\end{tikzpicture}
$$
Now since $\set{\overline{\alpha} \circ \gamma_a}{\alpha \in \cB_a}$ is an orthonormal basis for $\pi_{a'}(\bfA(a'))$ under the inner product \eqref{eqn:OpenInnerProduct}, and since $p$ is independent of the choice of orthonormal bases $\cB_a$, we see that the right hand side above is equal to
\begin{equation*}
\sum_{a\in \Irr(\cC)} \sum_{\alpha \in \cB_a}
\begin{tikzpicture}[baseline=.3cm]
	\draw (-.9,.6) arc (-180:0:.3cm);
	\draw (.3,.6) arc (-180:0:.3cm);
	\draw (-.6,.3) arc (-180:0:.6cm);
	\filldraw[fill = white] (.6,.3) circle (.05cm) node [above] {\scriptsize{$\alpha$}};
	\filldraw[fill = white] (-.6,.3) circle (.05cm) node [above] {\scriptsize{$\overline{\alpha}$}};
	\node at (-.9,.8) {\scriptsize{$H$}};
	\node at (-.3,.8) {\scriptsize{$\overline{H}$}};
	\node at (.9,.8) {\scriptsize{$\overline{H}$}};
	\node at (.3,.8) {\scriptsize{$H$}};
	\node at (.8,.1) {\scriptsize{$a$}};
	\node at (-.8,.1) {\scriptsize{$\overline{a}$}};
\end{tikzpicture}
=
(\id_{H\otimes \overline{H}}\otimes p) \circ \coev_{H\otimes \overline{H}}.
\qedhere
\end{equation*}
\end{proof}

We now define important structure constants which are essentially certain 6j symbols.

\begin{defn}
For $a,b,c\in \Irr(\cC)$, $\alpha \in \cB_a$, $\beta \in \cB_b$, $\gamma \in  \cB_c$, and $\delta \in \cC(c, a\otimes b)$, we define the
\emph{tetrahedral structure constant}
$$
\Delta
\left(
\begin{array}{ccc} a & b & c \\ \alpha & \beta & \gamma \end{array}
\middle| 
\begin{array}{c} \delta \end{array}
\right)
=
\begin{tikzpicture}[baseline=-.1cm]
	\coordinate (a) at (-.5,0);
	\coordinate (b) at (.5,0);
	\coordinate (c) at (0,1);
	\coordinate (c') at (1,1.3);
	\coordinate (d) at (0,-1);
	\coordinate (d') at (1,-1.3);
	\draw (c) -- ($ (c) + (0,.3) $) .. controls ++(90:.5cm) and ++(90:.5cm) .. (c') -- node [right] {\scriptsize{$\overline{c}$}} (d') .. controls ++(270:.5cm) and ++(270:.5cm) .. ($ (d) + (0,-.3) $) -- (d);
	\draw (a) .. controls ++(45:.5cm) and ++(135:.5cm) ..  node [above] {\scriptsize{$H$}} (b);
	\draw (a) to node [left] {\scriptsize{$a$}} (d);
	\draw (a) to node [left] {\scriptsize{$H$}} (c);
	\draw (b) to node [right] {\scriptsize{$b$}} (d);
	\draw (b) to node [right] {\scriptsize{$\overline{H}$}} (c);
	\ncircle{unshaded}{(-.5,0)}{.3}{$\alpha$}
	\ncircle{unshaded}{(.5,0)}{.3}{$\beta$}
	\ncircle{unshaded}{(0,-1)}{.3}{$\delta$}
	\ncircle{unshaded}{(0,1)}{.3}{$\gamma^*$}
	\node at (-.2,1.5) {\scriptsize{$c$}};
	\node at (-.2,-1.5) {\scriptsize{$c$}};
\end{tikzpicture}.
$$
Note that these structure constants have a $\bbZ/3\bbZ$ symmetry:
\begin{equation}
\label{eq:Z3Symmetry}
\Delta\left(
\begin{array}{ccc} a & b & c \\ \alpha & \beta & \gamma \end{array}
\middle| 
\begin{array}{c} \delta \end{array}
\right)
=
\Delta\left(
\begin{array}{ccc} \overline{c} & a & \overline{b} \\  \overline{\gamma}& \alpha & \overline{\beta} \end{array}
\middle| 
\begin{array}{c} \cF(\delta) \end{array}
\right)
=
\Delta\left(
\begin{array}{ccc} b & \overline{c} & \overline{a} \\  \beta & \overline{\gamma} & \overline{\alpha} \end{array}
\middle| 
\begin{array}{c} \cF^2(\delta) \end{array}
\right).
\end{equation}
\end{defn}

\begin{ex}
Using the tetrahedral structure constants, we get the following two important identities.
Both are proved by expanding the left hand sides out using the respective inner products.
\begin{enumerate}[(1)]
\item
For all $a,b,c\in \Irr(\cC)$ and $\alpha\in \cB_a$ and $\beta \in \cB_b$, and $\delta\in \Isom(c,a\otimes b)$, 
expanding using the inner product \eqref{eqn:OpenInnerProduct} on $\cB_c$ yields
\begin{equation}
\label{eq:ReplaceABD}
\begin{tikzpicture}[baseline=-.5cm]
	\coordinate (a) at (-.5,0);
	\coordinate (b) at (.5,0);
	\coordinate (d) at (0,-1);
	\draw ($ (a) + (0,.6) $) -- (a);
	\draw ($ (b) + (0,.6) $) -- (b);
	\draw ($ (d) + (0,-.6) $) -- (d);
	\draw (a) .. controls ++(45:.5cm) and ++(135:.5cm) ..  node [above] {\scriptsize{$H$}} (b);
	\draw (a) to node [left] {\scriptsize{$a$}} (d);
	\draw (b) to node [right] {\scriptsize{$b$}} (d);
	\ncircle{unshaded}{(-.5,0)}{.3}{$\alpha$}
	\ncircle{unshaded}{(.5,0)}{.3}{$\beta$}
	\ncircle{unshaded}{(0,-1)}{.3}{$\delta$}
	\node at (-.2,-1.5) {\scriptsize{$c$}};
	\node at (-.7,.5) {\scriptsize{$H$}};
	\node at (.7,.5) {\scriptsize{$\overline{H}$}};
\end{tikzpicture}
=
d_c^{-1}
\sum_{\gamma \in \cB_c}
\Delta\left(
\begin{array}{ccc} a & b & c \\ \alpha & \beta & \gamma \end{array}
\middle| 
\begin{array}{c} \delta \end{array}
\right)
\begin{tikzpicture}[baseline=-.1cm]
	\draw (-.3,.3) arc (-180:0:.3cm);
	\draw (0,0) -- (0,-.3);
	\filldraw[fill = white] (0,0) circle (.05cm) node [above] {\scriptsize{$\gamma$}};
	\node at (-.3,.5) {\scriptsize{$H$}};
	\node at (.3,.5) {\scriptsize{$\overline{H}$}};
	\node at (0,-.5) {\scriptsize{$c$}};
\end{tikzpicture}.
\end{equation}
\item
For all $a,b,c\in \Irr(\cC)$, $\alpha \in \cB_a$, $\beta \in \cB_b$, and $\gamma\in \cB_c$, 
expanding using the inner product on $\Isom(c, a\otimes b)$ yields
\begin{equation}
\label{eq:ReplaceABC}
\begin{tikzpicture}[baseline=.5cm]
	\coordinate (a) at (-.5,0);
	\coordinate (b) at (.5,0);
	\coordinate (c) at (0,1);
	\draw (a) -- ($ (a) + (0,-.6) $);
	\draw (b) -- ($ (b) + (0,-.6) $);
	\draw (c) -- ($ (c) + (0,.6) $);
	\draw (a) .. controls ++(45:.5cm) and ++(135:.5cm) ..  node [above] {\scriptsize{$H$}} (b);
	\draw (a) to node [left] {\scriptsize{$H$}} (c);
	\draw (b) to node [right] {\scriptsize{$\overline{H}$}} (c);
	\ncircle{unshaded}{(-.5,0)}{.3}{$\alpha$}
	\ncircle{unshaded}{(.5,0)}{.3}{$\beta$}
	\ncircle{unshaded}{(0,1)}{.3}{$\gamma^*$}
	\node at (-.2,1.5) {\scriptsize{$c$}};
	\node at (-.7,-.5) {\scriptsize{$a$}};
	\node at (.7,-.5) {\scriptsize{$b$}};
\end{tikzpicture}
=
d_c^{-1}
\sum_{\delta \in \Isom(c, a\otimes b)}
\Delta\left(
\begin{array}{ccc} a & b & c \\ \alpha & \beta & \gamma \end{array}
\middle| 
\begin{array}{c} \delta \end{array}
\right)
\begin{tikzpicture}[baseline=-.1cm]
	\draw (-.3,-.3) arc (180:0:.3cm);
	\draw (0,0) -- (0,.3);
	\filldraw[fill = white] (0,0) circle (.05cm) node [below] {\scriptsize{$\delta^*$}};
	\node at (-.3,-.5) {\scriptsize{$a$}};
	\node at (.3,-.5) {\scriptsize{$b$}};
	\node at (0,.5) {\scriptsize{$c$}};
\end{tikzpicture}.
\end{equation}
\end{enumerate}
\end{ex}

\begin{lem}
\label{lem:RemoveTopP}
The following two morphisms in $\cC(H\otimes \overline{H}\otimes H \otimes \overline{H}, H\otimes \overline{H})$ are equal:
$$
\begin{tikzpicture}[baseline=-.1cm]
	\draw (-.4,-.7) -- (-.4,.4);
	\draw (.4,-.7) -- (.4,.4);
	\draw (-.2,-.7) -- (-.2,-.1) arc (180:0:.2cm) -- (.2,-.7);
	\roundNbox{unshaded}{(-.3,-.3)}{.2}{0}{0}{$p$};
	\roundNbox{unshaded}{(.3,-.3)}{.2}{0}{0}{$p$};
\end{tikzpicture}
=
\begin{tikzpicture}[baseline=0cm]
	\draw (-.4,-.7) -- (-.4,-.1) .. controls ++(90:.1cm) and ++(270:.2cm) .. (-.1,.3) -- (-.1,.9);
	\draw (.4,-.7) -- (.4,-.1) .. controls ++(90:.1cm) and ++(270:.2cm) .. (.1,.3) -- (.1,.9);
	\draw (-.2,-.7) -- (-.2,-.1) .. controls ++(90:.2cm) and ++(90:.2cm) .. (.2,-.1) -- (.2,-.7);
	\roundNbox{unshaded}{(-.3,-.3)}{.2}{0}{0}{$p$};
	\roundNbox{unshaded}{(.3,-.3)}{.2}{0}{0}{$p$};
	\roundNbox{unshaded}{(0,.5)}{.2}{0}{0}{$p$};
\end{tikzpicture}
.
$$
\end{lem}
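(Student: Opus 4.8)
The plan is to prove the stronger statement that $m\circ(p\otimes p)$ already factors through the image of $p$, so that capping it with a further $p$ on top changes nothing. Conceptually this is the categorical shadow of the fact that the image of the faithful $*$-algebra natural transformation $\pi\colon\bfA\Rightarrow\bfH\otimes\overline{\bfH}$ is a subalgebra: the element $p$ is the orthogonal projection onto $\pi(\bfA)$, and the product of two elements of $\pi(\bfA)$ again lies in $\pi(\bfA)=\im p$, hence is fixed by $p$. I will turn this into a diagrammatic computation using the fusion identity \eqref{eq:ReplaceABD}.

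The computation has two ingredients. First, $p$ acts as the identity on each basis vector: for $c\in\Irr(\cC)$ and $\gamma\in\cB_c$, the orthonormality relation \eqref{eqn:OpenInnerProduct} gives $p\circ\gamma=\sum_{a,\alpha}\alpha\circ(\alpha^*\circ\gamma)=\gamma$, since $\alpha^*\circ\gamma=\delta_{\alpha=\gamma}\id_c$ for $\alpha\in\cB_c$, while $\alpha^*\circ\gamma=0$ whenever $\alpha\in\cB_a$ with $a\not\cong c$. Second, expanding $p=\sum_{a,\alpha}\alpha\circ\alpha^*$ on each of the two inputs shows the left-hand side equals $\sum_{a,b,\alpha,\beta}[m\circ(\alpha\otimes\beta)]\circ(\alpha^*\otimes\beta^*)$, so it suffices to prove $p\circ[m\circ(\alpha\otimes\beta)]=m\circ(\alpha\otimes\beta)$ for each fixed $\alpha\in\cB_a$ and $\beta\in\cB_b$, because the factor $\alpha^*\otimes\beta^*$ sits on the input side and is untouched by the top $p$. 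Inserting the resolution of the identity $\id_{a\otimes b}=\sum_{c}\sum_{\delta\in\Isom(c,a\otimes b)}\delta\circ\delta^*$, valid by \eqref{eq:IsomInnerProduct}, and applying \eqref{eq:ReplaceABD} to each summand gives
$$
m\circ(\alpha\otimes\beta)=\sum_{c\in\Irr(\cC)}\sum_{\delta\in\Isom(c,a\otimes b)}d_c^{-1}\sum_{\gamma\in\cB_c}\Delta\left(\begin{array}{ccc}a&b&c\\\alpha&\beta&\gamma\end{array}\middle|\delta\right)\gamma\circ\delta^*,
$$
which exhibits $m\circ(\alpha\otimes\beta)$ as a linear combination of maps of the form $\gamma\circ\delta^*$ with $\gamma\in\cB_c$. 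Capping with $p$ on top and invoking the first ingredient $p\circ\gamma=\gamma$ leaves the expression unchanged, which is exactly the desired equality.

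The only delicate bookkeeping, and the step I would verify most carefully, is the first ingredient taken uniformly across all simples: that the cross terms $\alpha^*\circ\gamma$ with $\alpha\in\cB_a$ and $a\not\cong c$ genuinely vanish (using that $\Irr(\cC)$ contains exactly one representative per isomorphism class, so $a\not\cong c$ forces $\cC(c,a)=0$), together with tracking the Fourier/rotation normalizations so that \eqref{eq:ReplaceABD} applies verbatim. Notably, the explicit values of the tetrahedral constants $\Delta$ never enter: they are merely the coefficients recording that the product lands in the span of the $\cB_c$, which is all that the argument needs.
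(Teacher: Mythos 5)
Your proof is correct, but it takes a genuinely shorter route than the paper's. The paper establishes the lemma by computing \emph{both} sides to the common normal form \eqref{eq:SubalgebraMultiplication}: the left side via the fusion relation and \eqref{eq:ReplaceABD}, and the right side via an additional application of \eqref{eq:ReplaceABC} to contract the top $p$'s basis elements $\gamma\circ\gamma^*$ against the product. You compute only the left-hand side --- your displayed formula for $m\circ(\alpha\otimes\beta)$ is exactly \eqref{eq:SubalgebraMultiplication} term by term --- and then dispense with the right-hand side entirely by the orthonormality observation $p\circ\gamma=\gamma$ for $\gamma\in\cB_c$, which is a special case (for simple $c$) of the later Lemma \ref{lem:RemovePFromAMorphisms}; since you derive it directly from \eqref{eqn:OpenInnerProduct} together with $\cC(c,a)=0$ for non-isomorphic simples, there is no circularity. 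This one-sided absorption argument never invokes \eqref{eq:ReplaceABC}, and, as you note, the tetrahedral constants enter only as unnamed coefficients witnessing that $m\circ(p\otimes p)$ lands in the span of the morphisms $\gamma\circ\delta^*$, i.e., in $\cC(\,\cdot\,,\im(p))$ --- the diagrammatic shadow of $\pi(\bfA)$ being closed under multiplication, which matches your stated conceptual reading. What the paper's symmetric computation buys is the explicit normal form \eqref{eq:SubalgebraMultiplication} itself, whose basis-independence and $\bbZ/3\bbZ$ symmetry \eqref{eq:Z3Symmetry} are reused in the subsequent Remark on rotation invariance; your argument still suffices for Corollary \ref{cor:RemoveAnyP}, since that corollary needs only the lemma, Lemma \ref{lem:SymmetricallySelfDualProjector}, and 2-click rotations. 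One tacit point worth a sentence in a final write-up: the sums over $\Irr(\cC)$ defining $p$ are finite because $\bfA$ is compact, so $\cC(a,H)\neq 0$ for only finitely many simples $a$, which legitimizes your term-by-term manipulations.
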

\begin{proof}
Using \eqref{eq:ReplaceABD} and \eqref{eq:ReplaceABC} and the fusion relation 
$$
\begin{tikzpicture}[baseline=-.1cm]
	\draw (-.1,-.3) -- (-.1,.3);
	\draw (.1,-.3) -- (.1,.3);
	\node at (-.1,-.5) {\scriptsize{$a$}};
	\node at (.1,-.5) {\scriptsize{$b$}};
\end{tikzpicture}
=
\sum_{c\in\Irr(\cC)}
\sum_{\delta\in \Isom(c, a\otimes b)} 
\begin{tikzpicture}[baseline=-.1cm]
	\draw (-.3,-.6) arc (180:0:.3cm);
	\draw (-.3,.6) arc (-180:0:.3cm);
	\draw (0,-.3) -- (0,.3);
	\filldraw[fill = white] (0,.3) circle (.05cm) node [above] {\scriptsize{$\delta$}};
	\filldraw[fill = white] (0,-.3) circle (.05cm) node [below] {\scriptsize{$\delta^*$}};
	\node at (-.3,-.8) {\scriptsize{$a$}};
	\node at (-.3,.8) {\scriptsize{$a$}};
	\node at (.3,-.8) {\scriptsize{$b$}};
	\node at (.3,.8) {\scriptsize{$b$}};
	\node at (-.2,0) {\scriptsize{$c$}};
\end{tikzpicture}
,
$$
it is straightforward to show that both morphisms are equal to
\begin{align}
&\sum_{a,b,c\in \Irr(\cC)} 
d_c^{-1}
\sum_{\substack{
\alpha\in \cB_a
\\
\beta\in\cB_b
\\
\gamma\in\cB_c
}}
\sum_{\delta\in \Isom(c, a\otimes b)}
\Delta\left(
\begin{array}{ccc} a & b & c \\ \alpha & \beta & \gamma \end{array}
\middle| 
\begin{array}{c} \delta \end{array}
\right)
\begin{tikzpicture}[baseline=.3cm]
	\draw (-.9,-.6) arc (180:0:.3cm);
	\draw (.3,-.6) arc (180:0:.3cm);
	\draw (-.6,-.3) arc (180:0:.6cm);
	\draw (0,.3) -- (0,.9);
	\draw (-.3,1.2) arc (-180:0:.3cm);
	\filldraw[fill = white] (-.6,-.3) circle (.05cm) node [below] {\scriptsize{$\alpha^*$}};
	\filldraw[fill = white] (.6,-.3) circle (.05cm) node [below] {\scriptsize{$\beta^*$}};
	\filldraw[fill = white] (0,.3) circle (.05cm) node [below] {\scriptsize{$\delta^*$}};
	\filldraw[fill = white] (0,.9) circle (.05cm) node [above] {\scriptsize{$\gamma$}};
	\node at (-.9,-.8) {\scriptsize{$H$}};
	\node at (-.3,-.8) {\scriptsize{$\overline{H}$}};
	\node at (.9,-.8) {\scriptsize{$\overline{H}$}};
	\node at (.3,-.8) {\scriptsize{$H$}};
	\node at (.3,1.4) {\scriptsize{$\overline{H}$}};
	\node at (-.3,1.4) {\scriptsize{$H$}};
	\node at (-.8,-.1) {\scriptsize{$a$}};
	\node at (.8,-.1) {\scriptsize{$b$}};
	\node at (.2,.6) {\scriptsize{$c$}};
\end{tikzpicture}
\notag\\&=
\sum_{a,b,c\in \Irr(\cC)} 
\sum_{\substack{
\alpha\in \cB_a
\\
\beta\in\cB_b
\\
\gamma\in\cB_c
}}
\sum_{\delta\in \ONB(c, a\otimes b)}
\Delta\left(
\begin{array}{ccc} a & b & c \\ \alpha & \beta & \gamma \end{array}
\middle| 
\begin{array}{c} \delta \end{array}
\right)
\begin{tikzpicture}[baseline=.3cm]
	\draw (-.9,-.6) arc (180:0:.3cm);
	\draw (.3,-.6) arc (180:0:.3cm);
	\draw (-.6,-.3) arc (180:0:.6cm);
	\draw (0,.3) -- (0,.9);
	\draw (-.3,1.2) arc (-180:0:.3cm);
	\filldraw[fill = white] (-.6,-.3) circle (.05cm) node [below] {\scriptsize{$\alpha^*$}};
	\filldraw[fill = white] (.6,-.3) circle (.05cm) node [below] {\scriptsize{$\beta^*$}};
	\filldraw[fill = white] (0,.3) circle (.05cm) node [below] {\scriptsize{$\delta^*$}};
	\filldraw[fill = white] (0,.9) circle (.05cm) node [above] {\scriptsize{$\gamma$}};
	\node at (-.9,-.8) {\scriptsize{$H$}};
	\node at (-.3,-.8) {\scriptsize{$\overline{H}$}};
	\node at (.9,-.8) {\scriptsize{$\overline{H}$}};
	\node at (.3,-.8) {\scriptsize{$H$}};
	\node at (.3,1.4) {\scriptsize{$\overline{H}$}};
	\node at (-.3,1.4) {\scriptsize{$H$}};
	\node at (-.8,-.1) {\scriptsize{$a$}};
	\node at (.8,-.1) {\scriptsize{$b$}};
	\node at (.2,.6) {\scriptsize{$c$}};
\end{tikzpicture}
\label{eq:SubalgebraMultiplication}
\end{align}
as $\Isom(c, a\otimes b) = \sqrt{d_c}\ONB(c, a\otimes b)$.
\end{proof}

\begin{rem}
Notice that the the morphism \eqref{eq:SubalgebraMultiplication} is invariant under cyclic rotation, since the sum is independent of the choice of $\ONB(c, a\otimes b)$, and the 1-click rotation $\cF$ produces an orthonormal basis of $\cC(\overline{b}, \overline{c}\otimes a)$ under the standard inner product \eqref{eq:CInnerProduct} which caps off all strings.
Finally, we use the $\bbZ/3\bbZ$ symmetry of the tetrahedral structure constants \eqref{eq:Z3Symmetry}.
\end{rem}

\begin{cor}
\label{cor:RemoveAnyP}
The morphism
$
\begin{tikzpicture}[baseline=0cm]
	\draw (-.4,-.7) -- (-.4,-.1) .. controls ++(90:.1cm) and ++(270:.2cm) .. (-.1,.3) -- (-.1,.9);
	\draw (.4,-.7) -- (.4,-.1) .. controls ++(90:.1cm) and ++(270:.2cm) .. (.1,.3) -- (.1,.9);
	\draw (-.2,-.7) -- (-.2,-.1) .. controls ++(90:.2cm) and ++(90:.2cm) .. (.2,-.1) -- (.2,-.7);
	\roundNbox{unshaded}{(-.3,-.3)}{.2}{0}{0}{$p$};
	\roundNbox{unshaded}{(.3,-.3)}{.2}{0}{0}{$p$};
	\roundNbox{unshaded}{(0,.5)}{.2}{0}{0}{$p$};
\end{tikzpicture}
$
remains unchanged by removing any single $p$.
\end{cor}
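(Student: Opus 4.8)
The plan is to bootstrap from Lemma~\ref{lem:RemoveTopP} using the cyclic rotational symmetry of the three-$p$ morphism recorded in the Remark following that lemma. Write $T$ for the displayed morphism carrying all three copies of $p$; by Lemma~\ref{lem:RemoveTopP} it equals \eqref{eq:SubalgebraMultiplication}. The key observation is that the three copies of $p$ sit on the three $H\otimes\overline{H}$-legs of an underlying trivalent vertex (the two bottom legs are the inputs, the top leg is the output), namely the Frobenius multiplication of $H\otimes\overline{H}$ from Example~\ref{ex:InnerHom} with a $p$ on each leg, and that this decorated vertex is invariant under the $\bbZ/3\bbZ$ rotation permuting its three legs. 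Lemma~\ref{lem:RemoveTopP} is exactly the statement that the copy of $p$ on the \emph{output} leg may be deleted; applying the rotation will then let us delete the copy of $p$ on either input leg.

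First I would make the rotation precise as an operation $\rho$ on the hom-space $\cC(H\otimes\overline{H}\otimes H\otimes\overline{H}, H\otimes\overline{H})$: since $H\otimes\overline{H}$ is self-dual, one bends the output pair down to the bottom using $\ev_{H\otimes\overline{H}}$ and bends the left input pair up to the top using $\coev_{H\otimes\overline{H}}$. This $\rho$ is an invertible endomorphism of the hom-space, so it may be applied to both sides of any equation between such morphisms. The symmetrically self-dual property of $p$ from Lemma~\ref{lem:SymmetricallySelfDualProjector}, namely $(p\otimes\id_{H\otimes\overline{H}})\circ\coev_{H\otimes\overline{H}} = (\id_{H\otimes\overline{H}}\otimes p)\circ\coev_{H\otimes\overline{H}}$, ensures that bending a leg past a copy of $p$ again yields a copy of $p$ on the bent leg. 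Consequently $\rho$ carries a diagram whose copies of $p$ lie on a set $S$ of legs to the diagram whose copies lie on the cyclically shifted set.

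Next I would verify $\rho(T) = T$, i.e.\ the cyclic symmetry of the fully decorated vertex. This is precisely the content of the Remark after Lemma~\ref{lem:RemoveTopP}: the sum \eqref{eq:SubalgebraMultiplication} is independent of the choice of $\ONB(c, a\otimes b)$, the one-click rotation $\cF$ sends such a basis to an orthonormal basis of $\cC(\overline{b}, \overline{c}\otimes a)$, and the tetrahedral structure constants enjoy the $\bbZ/3\bbZ$ symmetry \eqref{eq:Z3Symmetry}; together these identify $\rho(T)$ with $T$. Conceptually this is just rotational invariance of the Frobenius multiplication on $H\otimes\overline{H}$ combined with the symmetric self-duality of $p$.

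Finally I would apply $\rho$ to the equality of Lemma~\ref{lem:RemoveTopP}, which reads $T = (\text{diagram with the output-leg } p \text{ removed})$. On the left $\rho$ fixes $T$; on the right, since $\rho$ shifts the output leg to an input leg and preserves the copies of $p$ on the other two legs, it produces the diagram with one \emph{input}-leg $p$ removed, whence $T$ equals that diagram. Applying $\rho$ once more, or equivalently $\rho^{-1}$, handles the remaining input leg, so $T$ is unchanged upon deleting any single copy of $p$, as claimed. The step I expect to be the main obstacle is the bookkeeping in the two middle paragraphs: one must check that the leg-bending operation $\rho$ genuinely realizes the combinatorial $\bbZ/3\bbZ$ symmetry of \eqref{eq:Z3Symmetry} as an honest equality in the hom-space, tracking how the external $H$ and $\overline{H}$ strands are reattached and confirming via Lemma~\ref{lem:SymmetricallySelfDualProjector} that each retained $p$ lands on the intended leg.
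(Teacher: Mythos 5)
Your proposal is correct and is essentially the paper's own argument: the printed proof is the one-liner ``Immediate from Lemmas \ref{lem:SymmetricallySelfDualProjector} and \ref{lem:RemoveTopP} using 2-click rotations,'' and your rotation $\rho$, the use of the symmetric self-duality of $p$ to carry each projector around the bends, and the cyclic invariance of \eqref{eq:SubalgebraMultiplication} from the remark preceding the corollary are precisely the details that one-liner leaves implicit. Your careful bookkeeping in the middle paragraphs is exactly the right (and only) thing to check, so there is no gap.
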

\begin{proof}
Immediate from Lemmas \ref{lem:SymmetricallySelfDualProjector} and \ref{lem:RemoveTopP} using 2-click rotations.
\end{proof}

\begin{lem}
\label{lem:CapOffP}
Capping off $p$ produces $\id_H$, i.e.,
$
\begin{tikzpicture}[baseline=-.1cm]
	\draw (-.1,-.4) -- (-.1,.4);
	\draw (.1,.2) arc (180:0:.1cm) -- (.3,-.2) arc (0:-180:.1cm);
	\roundNbox{unshaded}{(0,0)}{.2}{0}{0}{$p$};
\end{tikzpicture}
=
\id_H
$.
\end{lem}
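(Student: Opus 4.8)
The plan is to expand $p$ by its definition and cap off each summand separately. Writing $p=\sum_{a\in\Irr(\cC)}\sum_{\alpha\in\cB_a}\alpha\circ\alpha^*$ with $\alpha\in\cB_a\subset\cC(a,H\otimes\overline{H})$, I note that capping off the $\overline{H}$-strand of a single term $\alpha\circ\alpha^*$ joins the two $a$-strands through the composition and joins the two $\overline{H}$-strands through the cap, while leaving the $H$-strand open at top and bottom. This is exactly the morphism on the left-hand side of the first of the three identities in \eqref{eq:IdentitiesForA(a)}, specialized to $\alpha'=\alpha$ (that is, $f'=f$). So the whole computation reduces to applying that identity term by term and then summing.

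For the normalization, recall $\cB_a=\{d_a^{1/2}\pi_a(f)\mid f\in\ONB(\bfA(a))\}$, so each $\alpha\in\cB_a$ contributes a factor $d_a^{1/2}$ in each of its two legs and $\alpha\circ\alpha^*=d_a\,\pi_a(f)\circ\pi_a(f)^*$. Capping off therefore produces $d_a$ times the capped diagram for the unweighted $\pi_a(f)\circ\pi_a(f)^*$, which by the first identity of \eqref{eq:IdentitiesForA(a)} equals $d_a\,d_H^{-1}\langle f|f\rangle_a\,\id_H=d_a\,d_H^{-1}\,\id_H$, using that $f$ is a unit vector for the inner product \eqref{eq:L2AInnerProduct}. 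Summing over $a$ and over $\alpha\in\cB_a$ then yields
\[
d_H^{-1}\,\id_H\sum_{a\in\Irr(\cC)}\sum_{\alpha\in\cB_a}d_a
=
d_H^{-1}\,\id_H\sum_{a\in\Irr(\cC)}\dim(\cC(a,H))\,d_a
=
d_H^{-1}\,\id_H\cdot d_H
=
\id_H,
\]
where the middle equality uses $|\cB_a|=\dim(\bfA(a))=\dim(\cC(a,H))$ (since $\bfL^2\bfA(a)=\bfA(a)=\cC(a,H)$ as vector spaces) and the last uses the semisimple decomposition $H\cong\bigoplus_{a\in\Irr(\cC)}a^{\oplus\dim\cC(a,H)}$, which gives $d_H=\sum_{a\in\Irr(\cC)}\dim(\cC(a,H))\,d_a$.

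The computation is essentially bookkeeping, so the main point requiring care is the normalization: one must check that the $d_a^{1/2}$ weights built into $\cB_a$, the factor $d_H^{-1}$ coming from \eqref{eq:IdentitiesForA(a)}, and the dimension count $\sum_{a}|\cB_a|\,d_a=d_H$ cancel precisely, leaving $\id_H$ with no stray scalar. I would also confirm that the cap in the statement is taken with the same evaluation and coevaluation maps ($\ev_H$ and $\coev_H$) as the loop occurring in \eqref{eq:IdentitiesForA(a)}, so that the reduction to that identity is literal rather than merely up to a spherical factor.
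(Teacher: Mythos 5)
Your proposal is correct and follows the paper's own argument exactly: expand $p$, apply the first identity of \eqref{eq:IdentitiesForA(a)} with $\alpha'=\alpha$ (picking up the factor $d_a$ from the normalization of $\cB_a$), and conclude via $\sum_{a\in\Irr(\cC)}\dim(\bfA(a))\,d_a = d_H$. You merely spell out the dimension count $\dim(\bfA(a))=\dim(\cC(a,H))$ and $d_H=\sum_a \dim(\cC(a,H))\,d_a$ that the paper leaves implicit in its final step.
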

\begin{proof}
Using the definition of $\cB_a$ and applying \eqref{eq:IdentitiesForA(a)}, we have 
\begin{equation*}
\begin{tikzpicture}[baseline=-.1cm]
	\draw (-.1,-.4) -- (-.1,.4);
	\draw (.1,.2) arc (180:0:.1cm) -- (.3,-.2) arc (0:-180:.1cm);
	\roundNbox{unshaded}{(0,0)}{.2}{0}{0}{$p$};
\end{tikzpicture}
=
\sum_{a\in \Irr(\cC)}
\sum_{\alpha\in \cB_a}
\begin{tikzpicture}[baseline=-.1cm]
	\draw (-.3,-.9) -- (-.3,-.6) arc (180:0:.3cm);
	\draw (-.3,.9) -- (-.3,.6) arc (-180:0:.3cm);
	\draw (0,-.3) -- (0,.3);
	\draw (.3,.6) arc (180:0:.3cm) -- (.9,-.6) arc (0:-180:.3cm);
	\filldraw[fill = white] (0,.3) circle (.05cm) node [above] {\scriptsize{$\alpha'$}};
	\filldraw[fill = white] (0,-.3) circle (.05cm) node [below] {\scriptsize{$\alpha^*$}};
	\node at (-.3,-1.1) {\scriptsize{$H$}};
	\node at (-.3,1.1) {\scriptsize{$H$}};
	\node at (.5,-.6) {\scriptsize{$\overline{H}$}};
	\node at (.5,.6) {\scriptsize{$\overline{H}$}};
	\node at (1.1,0) {\scriptsize{$H$}};
	\node at (-.2,0) {\scriptsize{$a$}};
\end{tikzpicture}
=
\sum_{a\in \Irr(\cC)}
\dim(\bfA(a)) \frac{d_a}{d_H} \id_H
=
\id_H.
\qedhere
\end{equation*}
\end{proof}

A version of the following proposition appears in \cite{MR3308880}.

\begin{prop}
\label{prop:W*ToFrobenius}
The object $\im(p)\subset H\otimes \overline{H}$ with multiplication and unit morphisms
$$
m=
d_H^{1/2}
\begin{tikzpicture}[baseline=0cm]
	\draw (-.4,-.7) -- (-.4,-.1) .. controls ++(90:.1cm) and ++(270:.2cm) .. (-.1,.3) -- (-.1,.9);
	\draw (.4,-.7) -- (.4,-.1) .. controls ++(90:.1cm) and ++(270:.2cm) .. (.1,.3) -- (.1,.9);
	\draw (-.2,-.7) -- (-.2,-.1) .. controls ++(90:.2cm) and ++(90:.2cm) .. (.2,-.1) -- (.2,-.7);
	\roundNbox{unshaded}{(-.3,-.3)}{.2}{0}{0}{$p$};
	\roundNbox{unshaded}{(.3,-.3)}{.2}{0}{0}{$p$};
	\roundNbox{unshaded}{(0,.5)}{.2}{0}{0}{$p$};
\end{tikzpicture}
\qquad\qquad
i=
d_H^{-1/2}
\begin{tikzpicture}[baseline=-.1cm]
	\draw (-.1,0) -- (-.1,.5);
	\draw (.1,0) -- (.1,.5);
	\draw (-.1,-.2) -- (-.1,-.4) arc (-180:0:.1cm) -- (.1,-.2);
	\roundNbox{unshaded}{(0,0)}{.2}{0}{0}{$p$};
\end{tikzpicture}
$$
is a normalized irreducible Q-system in $\cC$.
\end{prop}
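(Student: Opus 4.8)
The plan is to verify in turn that $(\im(p),m,i)$ is a unital associative algebra, that it is a separable C* Frobenius algebra, that its underlying object is real, that it is normalized, and finally that it is an irreducible Q-system; at each stage the three combinatorial lemmas about $p$ do the real work. First I would record that, since $\cC$ is idempotent complete and $p = p^* = p^2$, the image $\im(p)$ exists together with an isometric inclusion $\iota\colon \im(p)\hookrightarrow H\otimes\overline{H}$ satisfying $\iota\circ\iota^* = p$ and $\iota^*\circ\iota = \id_{\im(p)}$; because $m$ and $i$ carry bounding projectors on every external leg, they descend to genuine morphisms $m\colon \im(p)\otimes\im(p)\to\im(p)$ and $i\colon 1_\cC\to\im(p)$. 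Conceptually $\im(p)$ is an involutive subalgebra of the normalized inner-hom Q-system $(H\otimes\overline{H})$ of Examples \ref{ex:InnerHom} and \ref{ex:InnerHomQSystem}, and the argument parallels \cite[Lem.~3.21]{MR2794547}.

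For unitality I would plug $i = d_H^{-1/2}\,p\circ\coev_H$ into one input of $m$; using $p^2 = p$ and $p\circ\coev_H = \coev_H$ (valid since $d_H^{-1/2}\coev_H\in\cB_{1_\cC}$), the inner-hom zig-zag $(\id_H\otimes\ev_H\otimes\id_{\overline{H}})\circ(\coev_H\otimes\id_{H\otimes\overline{H}}) = \id_{H\otimes\overline{H}}$ collapses the composite to $p = \id_{\im(p)}$, the scalars $d_H^{1/2}d_H^{-1/2}$ cancelling; the other side is identical. For associativity, both bracketings $m\circ(m\otimes\id)$ and $m\circ(\id\otimes m)$ are $d_H$ times a diagram containing several internal projectors; Corollary \ref{cor:RemoveAnyP} lets me delete the redundant interior copies of $p$ so that each bracketing reduces to the same triple inner-hom product $p\circ(\id_H\otimes\ev_H\otimes\ev_H\otimes\id_{\overline{H}})\circ(p\otimes p\otimes p)$, flanked only by projectors on the outer legs, whence associativity follows from associativity of the inner-hom multiplication.

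Next I would establish the C* Frobenius property through Lemma \ref{lem:RotationRelation}. Self-duality of $\im(p)$ comes from the fact that $H\otimes\overline{H}$ is real (Example \ref{ex:InnerHom}) together with the symmetric self-duality of $p$ from Lemma \ref{lem:SymmetricallySelfDualProjector}: the identity $(p\otimes\id)\circ\coev_{H\otimes\overline{H}} = (\id\otimes p)\circ\coev_{H\otimes\overline{H}}$ says exactly that the standard symmetric self-duality of $H\otimes\overline{H}$ restricts along $\iota$ to a symmetric self-duality $\tilde{\coev}_{\im(p)}$, $\tilde{\ev}_{\im(p)}$ on $\im(p)$, so $\im(p)$ is again real. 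Rotational invariance, $m^* = (\id\otimes m)\circ(\tilde{\coev}_{\im(p)}\otimes\id) = (m\otimes\id)\circ(\id\otimes\tilde{\coev}_{\im(p)})$, then follows because the inner-hom multiplication is itself rotationally invariant and $p$ is symmetric with respect to the self-duality; Lemma \ref{lem:RotationRelation} upgrades this to the Frobenius relation. Separability, $m\circ m^* = \lambda\,\id_{\im(p)}$ with $\lambda>0$ automatic from the C* structure, I would obtain by computing $m\circ m^*$ directly: the bubble that forms is a single capped $p$, which by Lemma \ref{lem:CapOffP} equals $\id_H$, leaving a scalar multiple of $\id_{\im(p)}$.

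Finally, normalization is immediate, $i^*\circ i = d_H^{-1}\coev_H^*\circ\coev_H = d_H^{-1}\cdot d_H = \id_{1_\cC}$, so $\lambda' = 1$ in the language of Definition \ref{defn:QSystem}. Since $\im(p)$ is real, its Frobenius self-duality has $R = S$, and the balancing condition of Definition \ref{defn:StandardSolutions} reduces to equality of the left and right partial traces, which holds by reality; thus condition (1) of Lemma \ref{lem:UnitaryInvolution} is satisfied and the separable C* Frobenius algebra $\im(p)$ is a Q-system (cf.\ Remark \ref{rem:RealObject}). Irreducibility is the statement $\dim\cC(1_\cC,\im(p)) = 1$, which holds because the number of times $1_\cC$ occurs in $\im(p)$ equals $|\cB_{1_\cC}| = \dim\bfA(1_\cC)$, and this is $1$ precisely because $\bfA$ is connected. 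The main obstacle I anticipate is the Frobenius step: pinning down rotational invariance of the sandwiched multiplication requires carefully combining the symmetric self-duality of $p$ (Lemma \ref{lem:SymmetricallySelfDualProjector}) with the rotational invariance of the inner-hom algebra, whereas unitality, associativity, separability, normalization, and connectedness are comparatively direct consequences of Lemmas \ref{lem:RemoveTopP}, \ref{lem:CapOffP} and Corollary \ref{cor:RemoveAnyP}.
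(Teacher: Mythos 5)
Most of your argument is sound and runs parallel to the paper's (much terser) proof: descending $m$ and $i$ to $\im(p)$ via idempotent completeness, unitality from $p\circ\coev_H=\coev_H$ (Lemma \ref{lem:RemovePFromAMorphisms}) and the zig-zag, associativity by deleting the interior projector with Corollary \ref{cor:RemoveAnyP}, separability $m\circ m^*=d_H\,\id_{\im(p)}$ from Lemma \ref{lem:CapOffP}, normalization $i^*\circ i=\id_{1_\cC}$, and connectedness from $|\cB_{1_\cC}|=\dim\bfA(1_\cC)=1$. Your route to the Frobenius relation through Lemma \ref{lem:RotationRelation} (restricting the self-duality of $H\otimes\overline{H}$ along $p$ using Lemma \ref{lem:SymmetricallySelfDualProjector}) is a legitimate variant of the paper's direct check of the Frobenius identity via Corollary \ref{cor:RemoveAnyP}, and it does work, since $p$ commutes past the duality.

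The genuine gap is your standardness step: you claim that because $\im(p)$ is real with $R=S$, ``the balancing condition \dots holds by reality.'' That inference is false in general: symmetric self-duality does \emph{not} imply that the solutions to the conjugate equations are standard. For instance, take $A=a\oplus\overline{a}$ with $a\not\cong\overline{a}$ simple and $R=S=2R_a\oplus\tfrac12 S_a$; this satisfies the zig-zags and is symmetric, yet for $f=\id_a\oplus 0$ the left trace is $4d_a$ while the right trace is $\tfrac14 d_a$. Indeed, the paper treats the passage to standardness as nontrivial — it is exactly the content of (2)$\Rightarrow$(1) in Lemma \ref{lem:UnitaryInvolution}, with a cautionary footnote citing \cite{MR1444286}. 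Fortunately the repair costs nothing beyond what you have already proved: either (i) invoke the paper's corollary that a \emph{connected} C* Frobenius algebra is automatically an irreducible Q-system (you establish connectedness anyway), or (ii) verify condition (2) of Lemma \ref{lem:UnitaryInvolution} directly from your computations, $R^*\circ R=i^*\circ m\circ m^*\circ i=d_H\,(i^*\circ i)=d_H\,\id_{1_\cC}$, together with the observation that $d_{\im(p)}=\sum_{a\in\Irr(\cC)}\dim\bfA(a)\,d_a=d_H$, which is precisely the trace computation in Lemma \ref{lem:CapOffP}. (If you insist on checking balancing by hand, the honest argument is that your pairing is the restriction of the standard, tracial pairing on $H\otimes\overline{H}$ along a projector commuting with the duality, by Lemma \ref{lem:SymmetricallySelfDualProjector} — reality of $\im(p)$ alone does not suffice.)
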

\begin{proof}
A straightforward calculation using Corollary \ref{cor:RemoveAnyP} shows that $(\im(p),m,i)$ is a separable C* Frobenius algebra.
(See \cite[Lem.~5.8]{1507.04794} for a hint.)
Using Lemma \ref{lem:CapOffP}, we calculate that $m\circ m^* = d_H\id_H$ and $i\circ i^* = \id_{1_\cC}$, and thus $\im(p)$ is a normalized Q-system.
It is obvious that $\im(p)$ is connected as $\dim(\cB_{1_\cC} )=1$, and thus the Q-system is irreducible.
\end{proof}

\begin{lem}
\label{lem:RemovePFromAMorphisms}
For all $a\in \cC$ and $g\in \bfA(a)$, $p\circ \pi_a(g) = \pi_a(g)$.
In particular, the unit for the Q-system in Proposition \ref{prop:W*ToFrobenius} is given by
$i = d_H^{-1/2} (p\circ \coev_H) = d_H^{-1/2} \coev_H$.
\end{lem}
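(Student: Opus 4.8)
The plan is to show that $p$ acts as the identity on the image of $\pi$, which is exactly what its definition as the orthogonal projection onto $\bigoplus_{a\in\Irr(\cC)}\pi_a(\bfA(a))$ should encode. First I would reduce to the case $a\in\Irr(\cC)$. Both $\pi$ and the assignment $a\mapsto(g\mapsto p\circ\pi_a(g))$ are natural transformations $\bfA\Rightarrow\bfH\otimes\overline{\bfH}$; the latter because post-composition by the fixed morphism $p\in\End_\cC(H\otimes\overline{H})$ commutes with the pre-composition action $\xi\mapsto\xi\circ\psi$ defining $(\bfH\otimes\overline{\bfH})(\psi)$. Since $\cC$ is semisimple and these presheaves are additive, a natural transformation between them is determined by its components on simple objects, so it suffices to prove $p\circ\pi_a(g)=\pi_a(g)$ for $a\in\Irr(\cC)$.

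For $a\in\Irr(\cC)$, recall that $\cB_a$ is an orthonormal basis of $\pi_a(\bfA(a))$ under \eqref{eqn:OpenInnerProduct}, so $\pi_a(g)\in\spann(\cB_a)$ and by linearity it is enough to verify $p\circ\alpha=\alpha$ for each $\alpha\in\cB_a$. Reading off the definition of $p$, one has $p=\sum_{b\in\Irr(\cC)}\sum_{\beta\in\cB_b}\beta\circ\beta^*$ as a morphism $H\otimes\overline{H}\to H\otimes\overline{H}$, where $\beta\in\cC(b,H\otimes\overline{H})$ and $\beta^*\in\cC(H\otimes\overline{H},b)$. Then $p\circ\alpha=\sum_{b,\beta}\beta\circ(\beta^*\circ\alpha)$, and $\beta^*\circ\alpha\in\cC(a,b)$ vanishes unless $b=a$ by simplicity of $a$ and $b$; when $b=a$, the relation \eqref{eqn:OpenInnerProduct} gives $\beta^*\circ\alpha=\delta_{\alpha,\beta}\id_a$. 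Hence $p\circ\alpha=\sum_{\beta\in\cB_a}\delta_{\alpha,\beta}\beta=\alpha$, as required.

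For the ``in particular'' statement, I would apply the identity just established to $a=1_\cC$ and $g=i_\bfA$. By Definition \ref{defn:InducedRepresentation} we have $\pi_{1_\cC}(i_\bfA)=d_H^{-1/2}\coev_H$, so $p\circ\coev_H=\coev_H$, and therefore the unit $i=d_H^{-1/2}(p\circ\coev_H)$ from Proposition \ref{prop:W*ToFrobenius} equals $d_H^{-1/2}\coev_H$.

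I expect the only genuine subtlety to be the reduction to simple objects: one must confirm that $p\circ\pi$ really is natural and that agreement of two natural transformations on all simple objects forces equality on every object, which rests on additivity of the presheaves together with the semisimplicity of $\cC$ (concretely, decomposing $\id_a=\sum_i\iota_i\circ\iota_i^*$ over simple summands $\iota_i$ and using naturality to transport the simple case). The computation in the simple case itself is then immediate from orthonormality \eqref{eqn:OpenInnerProduct} and Schur's lemma.
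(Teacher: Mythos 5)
Your proof is correct, and it reaches the conclusion by a different organization of the argument than the paper. The paper proves $p\circ\pi_a(g)=\pi_a(g)$ for arbitrary $a\in\cC$ in one direct computation: it expands $p\circ\pi_a(g)$ via the definition of $p$, pairs the resulting $\cC(c,a)$-data against the bases $\Isom(c,a)$, invokes the identity $\langle f|g\circ\beta\rangle_c=\langle f\circ\beta^*|g\rangle_a$, and recognizes $\{f\circ\beta^*\}$ as an orthonormal basis of $\bfA(a)$ under \eqref{eq:L2AInnerProduct}, so that the sum \eqref{eq:p circ pi(g) - 2} collapses to the basis expansion of $g$. You instead externalize the semisimple decomposition as a naturality reduction: since $\pi$ is natural (Definition \ref{defn:InducedRepresentation}) and post-composition by the fixed morphism $p$ commutes with the pre-composition action defining $(\bfH\otimes\overline{\bfH})(\psi)$, the assignment $g\mapsto p\circ\pi_a(g)$ is again natural, and writing $\id_a=\sum_i\iota_i\circ\iota_i^*$ over simple summands transports the claim to $a\in\Irr(\cC)$, where it is immediate: $\pi_a(\bfA(a))=\spann(\cB_a)$, and for $\alpha\in\cB_a$, $\beta\in\cB_b$ Schur's lemma kills the cross terms $\beta^*\circ\alpha\in\cC(a,b)$ with $b\neq a$ while \eqref{eqn:OpenInnerProduct} gives $\beta^*\circ\alpha=\delta_{\alpha,\beta}\id_a$, whence $p\circ\alpha=\alpha$. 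The two routes rest on the same underlying facts — your isometries $\iota_i$ play the role of the paper's $\Isom(c,a)$, and \eqref{eqn:OpenInnerProduct} is a rescaled instance of the third identity in \eqref{eq:IdentitiesForA(a)} — but your reduction makes the core computation a triviality, at the cost of a (correctly handled) preliminary step verifying naturality and the sufficiency of checking on simples; the paper's single calculation, by contrast, yields intermediate formulas that it reuses essentially verbatim in the proof of Lemma \ref{lem:RemoveTheta}, which is stated to be ``similar to the proof of Lemma \ref{lem:RemovePFromAMorphisms}.'' Your treatment of the ``in particular'' clause ($a=1_\cC$, $g=i_\bfA$, using $\pi_{1_\cC}(i_\bfA)=d_H^{-1/2}\coev_H$) agrees with the paper's; note that since $1_\cC$ is assumed simple, this case does not even require your reduction step.
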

\begin{proof}
For $a\in \cC$ and $g\in \bfA(a)$, we have
\begin{equation}
\label{eq:p circ pi(g)}
p\circ \pi_a(g)
=
d_H^{-1/2}
\sum_{c\in\Irr(\cC)}
\sum_{\alpha\in \cB_c}
\begin{tikzpicture}[baseline=.4cm]
	\draw (-.3,-.6) -- (-.3,0);
	\draw (.3,-.3) arc (-180:0:.3cm) -- (.9,.55) arc (0:180:.45cm) -- (0,0);
	\draw (.45,1) -- (.45,1.4);
	\draw (0,1.85) arc (-180:0:.45cm);
	\filldraw[fill=white] (.45,1.4) circle (.05cm) node [above] {\scriptsize{$\alpha$}};
	\filldraw[fill=white] (.45,1) circle (.05cm) node [below] {\scriptsize{$\alpha^*$}};
	\roundNbox{unshaded}{(0,0)}{.3}{.2}{.2}{$\lambda_a(g)$}
	\node at (.25,1.2) {\scriptsize{$c$}};
	\node at (-.5,-.5) {\scriptsize{$a$}};
	\node at (-.2,.5) {\scriptsize{$H$}};
	\node at (1.1,.5) {\scriptsize{$\overline{H}$}};
	\node at (-.2,1.75) {\scriptsize{$H$}};
	\node at (1.1,1.75) {\scriptsize{$\overline{H}$}};
\end{tikzpicture}
=
d_H^{-3/2}
\sum_{c\in\Irr(\cC)}
\sum_{f\in \ONB(\bfA(c))}
d_c
\begin{tikzpicture}[baseline=-.1cm]
	\draw (.3,-1.3) arc (-180:0:.3cm) -- (.9,.3) arc (0:180:.3cm);
	\draw (.3,1.1) arc (-180:0:.3cm) -- (.9,2);
	\draw (-.3,0) -- (-.3,1.4);
	\draw (0,1.4) --(0,2);
	\draw (-.3,-1) -- (-.3,-1.8);
	\draw (0,-1) -- (0,0);
	\roundNbox{unshaded}{(0,1.4)}{.3}{.2}{.2}{$\lambda_c(f)$}
	\roundNbox{unshaded}{(0,0)}{.3}{.3}{.3}{$\lambda_c(f)^*$}
	\roundNbox{unshaded}{(0,-1)}{.3}{.2}{.2}{$\lambda_a(g)$}
	\node at (-.5,.7) {\scriptsize{$c$}};
	\node at (-.5,-1.7) {\scriptsize{$a$}};
	\node at (-.2,1.9) {\scriptsize{$H$}};
	\node at (-.2,-.5) {\scriptsize{$H$}};
	\node at (1.1,1.9) {\scriptsize{$\overline{H}$}};
	\node at (1.1,-.5) {\scriptsize{$\overline{H}$}};
\end{tikzpicture}
\end{equation}
where $\ONB(\bfA(c))$ is with respect to inner product \eqref{eq:L2AInnerProduct}.
For $c\in\Irr(\cC)$ and $f\in \ONB(\bfA(c))$, we take inner products against an orthonormal basis $\Isom(c,a)$ for inner product \eqref{eq:IsomInnerProduct} on $\cC(c,a)$ to see
\begin{align*}
\begin{tikzpicture}[baseline=-.6cm]
	\draw (.3,-1.3) arc (-180:0:.3cm) -- (.9,.3) arc (0:180:.3cm);
	\draw (-.3,0) -- (-.3,.8);
	\draw (-.3,-1) -- (-.3,-1.8);
	\draw (0,-1) -- (0,0);
	\roundNbox{unshaded}{(0,0)}{.3}{.3}{.3}{$\lambda_c(f)^*$}
	\roundNbox{unshaded}{(0,-1)}{.3}{.2}{.2}{$\lambda_a(g)$}
	\node at (-.5,.7) {\scriptsize{$c$}};
	\node at (-.5,-1.7) {\scriptsize{$a$}};
	\node at (-.2,-.5) {\scriptsize{$H$}};
	\node at (1.1,-.5) {\scriptsize{$\overline{H}$}};
\end{tikzpicture}
=
\begin{tikzpicture}[baseline=-1.6cm]
	\draw (.3,-1.3) arc (-180:0:.3cm) -- (.9,.3) arc (0:180:.3cm);
	\draw (-.3,0) -- (-.3,.8);
	\draw (-.3,-1) -- (-.3,-3.6);
	\draw (0,-1) -- (0,0);
	\roundNbox{unshaded}{(0,0)}{.3}{.3}{.3}{$\lambda_c(f)^*$}
	\roundNbox{unshaded}{(0,-1)}{.3}{.2}{.2}{$\lambda_a(g)$}
	\roundNbox{unshaded}{(-.3,-2)}{.3}{0}{0}{$\beta$}
	\roundNbox{unshaded}{(-.3,-3)}{.3}{0}{0}{$\beta^*$}
	\node at (-.5,.7) {\scriptsize{$c$}};
	\node at (-.5,-1.5) {\scriptsize{$a$}};
	\node at (-.5,-2.5) {\scriptsize{$c$}};
	\node at (-.5,-3.5) {\scriptsize{$a$}};
	\node at (-.2,-.5) {\scriptsize{$H$}};
	\node at (1.1,-.5) {\scriptsize{$\overline{H}$}};
\end{tikzpicture}
=
\sum_{\beta\in \Isom(c, a)}
\begin{tikzpicture}[baseline=-.6cm]
	\draw (.3,-1.3) arc (-180:0:.3cm) -- (.9,.3) arc (0:180:.3cm);
	\draw (-.3,0) -- (-.3,.8);
	\draw (-.3,-1) -- (-.3,-2.6);
	\draw (0,-1) -- (0,0);
	\roundNbox{unshaded}{(0,0)}{.3}{.3}{.3}{$\lambda_c(f)^*$}
	\roundNbox{unshaded}{(0,-1)}{.3}{.7}{.4}{$\lambda_c(g\circ \beta)$}
	\roundNbox{unshaded}{(-.3,-2)}{.3}{0}{0}{$\beta^*$}
	\node at (-.5,.7) {\scriptsize{$c$}};
	\node at (-.5,-1.5) {\scriptsize{$c$}};
	\node at (-.5,-2.5) {\scriptsize{$a$}};
	\node at (-.2,-.5) {\scriptsize{$H$}};
	\node at (1.1,-.5) {\scriptsize{$\overline{H}$}};
\end{tikzpicture}
=
\frac{d_H}{d_c}
\sum_{\beta \in \Isom(c, a)}
\langle f| g\circ \beta\rangle_c \beta^*
\end{align*}
by \eqref{eq:IdentitiesForA(a)}.
Plugging back in to \eqref{eq:p circ pi(g)}, we obtain
\begin{equation}
\label{eq:p circ pi(g) - 2}
p\circ \pi_a(g)
=
d_H^{-1/2}
\sum_{c\in\Irr(\cC)}
\sum_{f\in \ONB(\bfA(c))}
\sum_{\beta \in \Isom(c,a)}
\langle f| g\circ \beta\rangle_c 
\begin{tikzpicture}[baseline=.8cm]
	\draw (.3,1.1) arc (-180:0:.3cm) -- (.9,2);
	\draw (-.3,-.2) -- (-.3,1.4);
	\draw (0,1.4) --(0,2);
	\roundNbox{unshaded}{(0,1.4)}{.3}{.2}{.2}{$\lambda_c(f)$}
	\roundNbox{unshaded}{(-.3,.4)}{.3}{0}{0}{$\beta^*$}
	\node at (-.5,.9) {\scriptsize{$c$}};
	\node at (-.5,-.1) {\scriptsize{$a$}};
	\node at (-.2,1.9) {\scriptsize{$H$}};
	\node at (1.1,1.9) {\scriptsize{$\overline{H}$}};
\end{tikzpicture}.
\end{equation}
Now it is straightforward to verify that for all $f\in \bfA(c)$ and $\beta \in \cC(c,a)$, 
$\langle f|g \circ \beta\rangle_c = \langle f\circ \beta^* |g\rangle_a$.
Moreover,
$$
\set{f\circ \beta^*}{f\in \ONB(\bfA(c)),\,\, \beta \in \Isom(c,a),\,\,c\in \Irr(\cC)}
$$
is an orthonormal basis for $\bfA(a)$ under inner product \eqref{eq:L2AInnerProduct}.
Thus \eqref{eq:p circ pi(g) - 2} simplifies to
\begin{equation*}
p\circ \pi_a(g) 
= 
d_H^{-1/2} 
\sum_{h\in \ONB(\bfA(a))} 
\langle h | g\rangle_a
\begin{tikzpicture}[baseline=-.1cm]
	\draw (0,0) -- (0,.7);
	\draw (-.3,-.3) -- (-.3,-.7);
	\draw (.3,-.3) arc (-180:0:.3cm) -- (.9,.7);
	\roundNbox{unshaded}{(0,0)}{.3}{.3}{.3}{$\lambda_a(h)$}
	\node at (-.5,-.5) {\scriptsize{$a$}};
	\node at (-.2,.5) {\scriptsize{$H$}};
	\node at (1.1,0) {\scriptsize{$\overline{H}$}};
\end{tikzpicture}
=
d_H^{-1/2}
\begin{tikzpicture}[baseline=-.1cm]
	\draw (0,0) -- (0,.7);
	\draw (-.3,-.3) -- (-.3,-.7);
	\draw (.3,-.3) arc (-180:0:.3cm) -- (.9,.7);
	\roundNbox{unshaded}{(0,0)}{.3}{.3}{.3}{$\lambda_a(g)$}
	\node at (-.5,-.5) {\scriptsize{$a$}};
	\node at (-.2,.5) {\scriptsize{$H$}};
	\node at (1.1,0) {\scriptsize{$\overline{H}$}};
\end{tikzpicture}
=
\pi_a(g).
\qedhere
\end{equation*}
\end{proof}

\subsection{Morphisms}

In the last section, to each compact connected W*-algebra object $\bfA\in \Vec(\cC)$, we associated a canonical projection $p_\bfA\in \End_\cC(H_\bfA\otimes \overline{H_\bfA})$ where $H_\bfA\in \cC$ is the object representing the compact Hilbert space object $\bfL^2\bfA$, i.e., $\bfL^2\bfA = \cC(a, H_\bfA)$.
We use the notation $Q(\bfA)=\im(p_\bfA)$ for the subobject of $H_\bfA\otimes \overline{H_\bfA}$.

Suppose now we have a $*$-algebra natural transformation $\theta: \bfA \Rightarrow \bfB$ between compact connected W*-algebra objects.

\begin{nota}
Recall that for $a\in\cC$, we defined
$$
\cB_a^\bfA = \set{d_a^{1/2} \pi^\bfA_a(f)}{f \in \ONB(\bfA(a))}
$$
where $\ONB(\bfA(a))$ is an orthonormal basis for $\bfA(a)$ under the inner product \eqref{eq:L2AInnerProduct}.
By a slight abuse of notation, for $\alpha =d_a^{1/2} \pi^\bfA_a(f)\in \cB_a^\bfA$, we write 
$\theta(\alpha)$ for $d_a^{1/2} \pi^\bfB_a(\theta(f))\in \cB_a^\bfB$.
Moreover, by \eqref{eq:Isometry}, we may choose the orthonormal basis $\ONB(\bfB(a))$ so that it contains $\theta(\ONB(\bfA(a)))$ as a subset.
This means that $\theta(\cB_a^\bfA) \subset \cB_a^\bfB$ as orthonormal bases under inner product \eqref{eqn:OpenInnerProduct}.
\end{nota}

\begin{defn}
We define the following morphisms in $\cC$, which are clearly independent of the choice of orthonormal bases $\cB_a^\bfA$ for $a\in \Irr(\cC)$:
$$
Q(\theta) =
\sum_{a\in\Irr(\cC)}
\sum_{\alpha \in \cB_a^\bfA}
\begin{tikzpicture}[baseline=-.1cm]
	\draw (-.4,-.7) arc (180:0:.4cm);
	\draw (-.4,.7) arc (-180:0:.4cm);
	\draw (0,-.3) -- (0,.3);
	\filldraw[fill = white] (0,.3) circle (.05cm) node [above] {\scriptsize{$\theta(\alpha)$}};
	\filldraw[fill = white] (0,-.3) circle (.05cm) node [below] {\scriptsize{$\alpha^*$}};
	\node at (-.4,-.9) {\scriptsize{$H_\bfA$}};
	\node at (-.4,.9) {\scriptsize{$H_\bfB$}};
	\node at (.4,-.9) {\scriptsize{$\overline{H_\bfA}$}};
	\node at (.4,.9) {\scriptsize{$\overline{H_\bfB}$}};
	\node at (-.2,0) {\scriptsize{$a$}};
\end{tikzpicture}
\qquad
\qquad
\theta(p_\bfA)=
\sum_{a\in\Irr(\cC)}
\sum_{\alpha \in \cB_a^\bfA}
\begin{tikzpicture}[baseline=-.1cm]
	\draw (-.4,-.7) arc (180:0:.4cm);
	\draw (-.4,.7) arc (-180:0:.4cm);
	\draw (0,-.3) -- (0,.3);
	\filldraw[fill = white] (0,.3) circle (.05cm) node [above] {\scriptsize{$\theta(\alpha)$}};
	\filldraw[fill = white] (0,-.3) circle (.05cm) node [below] {\scriptsize{$\theta(\alpha)^*$}};
	\node at (-.4,-.9) {\scriptsize{$H_\bfB$}};
	\node at (-.4,.9) {\scriptsize{$H_\bfB$}};
	\node at (.4,-.9) {\scriptsize{$\overline{H_\bfB}$}};
	\node at (.4,.9) {\scriptsize{$\overline{H_\bfB}$}};
	\node at (-.2,0) {\scriptsize{$a$}};
\end{tikzpicture}
.
$$
It is straightforward to verify using using  \eqref{eq:Isometry} together with \eqref{eq:IdentitiesForA(a)} and \eqref{eqn:OpenInnerProduct} that $\theta(p_\bfA)$ is an orthogonal projection such that $\theta(p_\bfA)\leq p_\bfB$, and $Q(\theta)$ is a partial isometry satisfying $Q(\theta) Q(\theta)^* = \theta(p_\bfA)$ and $Q(\theta)^* Q(\theta) = p_\bfA$.
\end{defn}

We have the following key lemma.

\begin{lem}
\label{lem:RemoveTheta}
For all $a\in \cC$ and $f\in \bfA(a)$,
$$
d_{H_\bfB}^{-1/2}
\begin{tikzpicture}[baseline=-.1cm]
	\draw (0,0) -- (0,.7);
	\draw (-.3,-.3) -- (-.3,-.7);
	\draw (.3,-.3) arc (-180:0:.3cm) -- (.9,.7);
	\roundNbox{unshaded}{(0,0)}{.3}{.8}{.4}{$\lambda_a^\bfB(\theta_a(f))$}
	\node at (-.5,-.5) {\scriptsize{$a$}};
	\node at (-.3,.5) {\scriptsize{$H_\bfB$}};
	\node at (1.2,0) {\scriptsize{$\overline{H_\bfB}$}};
\end{tikzpicture}
=
d_{H_\bfB}^{-1/2}
\begin{tikzpicture}[baseline=.4cm]
	\draw (0,0) -- (0,1.7);
	\draw (-.3,-.3) -- (-.3,-.7);
	\draw (.3,-.3) arc (-180:0:.3cm) -- (.9,1.7);
	\roundNbox{unshaded}{(0,0)}{.3}{.8}{.4}{$\lambda_a^\bfB(\theta_a(f))$}
	\roundNbox{unshaded}{(.5,1)}{.3}{.4}{.3}{$p_\bfB$}
	\node at (-.5,-.5) {\scriptsize{$a$}};
	\node at (-.3,.5) {\scriptsize{$H_\bfB$}};
	\node at (1.2,0) {\scriptsize{$\overline{H_\bfB}$}};
	\node at (-.3,1.5) {\scriptsize{$H_\bfB$}};
	\node at (1.2,1.5) {\scriptsize{$\overline{H_\bfB}$}};
\end{tikzpicture}
=
d_{H_\bfB}^{-1/2}
\begin{tikzpicture}[baseline=.4cm]
	\draw (0,0) -- (0,1.7);
	\draw (-.3,-.3) -- (-.3,-.7);
	\draw (.3,-.3) arc (-180:0:.3cm) -- (.9,1.7);
	\roundNbox{unshaded}{(0,0)}{.3}{.8}{.4}{$\lambda_a^\bfB(\theta_a(f))$}
	\roundNbox{unshaded}{(.5,1)}{.3}{.4}{.3}{$\theta(p_\bfA)$}
	\node at (-.5,-.5) {\scriptsize{$a$}};
	\node at (-.3,.5) {\scriptsize{$H_\bfB$}};
	\node at (1.2,0) {\scriptsize{$\overline{H_\bfB}$}};
	\node at (-.3,1.5) {\scriptsize{$H_\bfB$}};
	\node at (1.2,1.5) {\scriptsize{$\overline{H_\bfB}$}};
\end{tikzpicture}
=
d_{H_\bfA}^{-1/2}
\begin{tikzpicture}[baseline=.4cm]
	\draw (0,0) -- (0,1.7);
	\draw (-.3,-.3) -- (-.3,-.7);
	\draw (.3,-.3) arc (-180:0:.3cm) -- (.9,1.7);
	\roundNbox{unshaded}{(0,0)}{.3}{.4}{.4}{$\lambda_a^\bfA(f)$}
	\roundNbox{unshaded}{(.5,1)}{.3}{.4}{.3}{$Q(\theta)$}
	\node at (-.5,-.5) {\scriptsize{$a$}};
	\node at (-.3,.5) {\scriptsize{$H_\bfA$}};
	\node at (1.2,0) {\scriptsize{$\overline{H_\bfA}$}};
	\node at (-.3,1.5) {\scriptsize{$H_\bfB$}};
	\node at (1.2,1.5) {\scriptsize{$\overline{H_\bfB}$}};
\end{tikzpicture}.
$$
In particular, $\pi_a^\bfB(\theta(f)) = Q(\theta) \circ \pi^\bfA_a(f)$.
\end{lem}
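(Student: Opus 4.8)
The plan is to read off the four diagrams as, respectively, (1) $\pi_a^\bfB(\theta_a(f))$, (2) $p_\bfB\circ\pi_a^\bfB(\theta_a(f))$, (3) $\theta(p_\bfA)\circ\pi_a^\bfB(\theta_a(f))$, and (4) $Q(\theta)\circ\pi_a^\bfA(f)$, using Definition \ref{defn:InducedRepresentation} to identify the capped $\lambda$-diagrams with $\pi$, and then to show that all four equal $\pi_a^\bfB(\theta_a(f))$. The first equality is immediate: applying Lemma \ref{lem:RemovePFromAMorphisms} to the W*-algebra object $\bfB$ with $g=\theta_a(f)\in\bfB(a)$ gives $p_\bfB\circ\pi_a^\bfB(\theta_a(f))=\pi_a^\bfB(\theta_a(f))$.

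The heart of the lemma is the last equality, which is the asserted ``in particular'' statement $\pi_a^\bfB(\theta_a(f))=Q(\theta)\circ\pi_a^\bfA(f)$. Here I would run the computation proving Lemma \ref{lem:RemovePFromAMorphisms} essentially verbatim, but with $Q(\theta)$ in place of $p_\bfA$. Expanding $Q(\theta)=\sum_{c\in\Irr(\cC)}\sum_{\alpha\in\cB_c^\bfA}(\cdots)$ with $\alpha=d_c^{1/2}\pi_c^\bfA(h)$ and composing with $\pi_a^\bfA(f)$, the bottom half of each summand (the $\alpha^*$ capped against $\pi_a^\bfA(f)$, both living over $\bfA$) is literally the bottom computation of Lemma \ref{lem:RemovePFromAMorphisms}, yielding $\tfrac{d_{H_\bfA}}{d_c}\sum_{\beta\in\Isom(c,a)}\langle h\mid f\circ\beta\rangle_c^\bfA\,\beta^*$. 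The top half is now $\theta(\alpha)=d_c^{1/2}\pi_c^\bfB(\theta_c(h))$ rather than $\alpha$, so $\lambda_c^\bfA(h)$ is replaced by $\lambda_c^\bfB(\theta_c(h))$, and tracking the dimension factors shows the surviving prefactor is exactly $d_{H_\bfB}^{-1/2}$. Finally I would use naturality of $\lambda^\bfB$ and of $\theta$ to rewrite $\lambda_c^\bfB(\theta_c(h))$ precomposed with $\beta^*$ as $\lambda_a^\bfB(\theta_a(h\circ\beta^*))$, the identity $\langle h\mid f\circ\beta\rangle_c^\bfA=\langle h\circ\beta^*\mid f\rangle_a^\bfA$, and the fact that $\set{h\circ\beta^*}{h\in\ONB(\bfA(c)),\ \beta\in\Isom(c,a),\ c\in\Irr(\cC)}$ is an orthonormal basis of $\bfA(a)$; by linearity of $\theta_a$ the sum then collapses to $d_{H_\bfB}^{-1/2}$ times the diagram defining $\pi_a^\bfB(\theta_a(f))$, i.e.\ to $\pi_a^\bfB(\theta_a(f))$.

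For the middle equality $\theta(p_\bfA)\circ\pi_a^\bfB(\theta_a(f))=\pi_a^\bfB(\theta_a(f))$, rather than redo the computation I would deduce it from the already-established relations $Q(\theta)^*Q(\theta)=p_\bfA$ and $Q(\theta)Q(\theta)^*=\theta(p_\bfA)$. Indeed, by the previous paragraph and Lemma \ref{lem:RemovePFromAMorphisms} applied to $\bfA$, $Q(\theta)^*\circ\pi_a^\bfB(\theta_a(f))=Q(\theta)^*Q(\theta)\circ\pi_a^\bfA(f)=p_\bfA\circ\pi_a^\bfA(f)=\pi_a^\bfA(f)$, whence $\theta(p_\bfA)\circ\pi_a^\bfB(\theta_a(f))=Q(\theta)\circ\pi_a^\bfA(f)=\pi_a^\bfB(\theta_a(f))$, closing the chain.

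The main obstacle I anticipate is bookkeeping rather than conceptual. The delicate points are keeping the two distinct dimension normalizations $d_{H_\bfA}$ and $d_{H_\bfB}$ straight through the mirrored computation (this is exactly why diagrams (1)--(3) carry the prefactor $d_{H_\bfB}^{-1/2}$ while diagram (4) carries $d_{H_\bfA}^{-1/2}$), and ensuring that the cross-category pairings reduce correctly to $\bfA$-inner products via the isometry identity \eqref{eq:Isometry} together with naturality of $\theta$, so that the basis vectors of $\bfB(c)$ lying outside the image of $\theta$ contribute nothing.
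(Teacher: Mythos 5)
Your proposal is correct and matches the paper's (largely omitted) proof: the first equality is exactly Lemma \ref{lem:RemovePFromAMorphisms} applied to $\bfB$ with $g=\theta_a(f)$, and your mirrored computation for the final equality, including the $d_{H_\bfA}^{-1/2}$ versus $d_{H_\bfB}^{-1/2}$ bookkeeping that leaves a surviving prefactor of $d_{H_\bfB}^{-1/2}$, is precisely the argument the paper declares ``similar to the proof of Lemma \ref{lem:RemovePFromAMorphisms}'' and omits. Your only deviation is the middle equality, which you deduce from the previously established relations $Q(\theta)^*Q(\theta)=p_\bfA$ and $Q(\theta)Q(\theta)^*=\theta(p_\bfA)$ rather than from the paper's one-line observation that $\pi_a^\bfB(\theta_a(f))$ is orthogonal to $\cB_a^\bfB\setminus\theta(\cB_a^\bfA)$; since those partial-isometry relations are verified in the paper before the lemma (using \eqref{eq:Isometry}, \eqref{eq:IdentitiesForA(a)}, and \eqref{eqn:OpenInnerProduct}, i.e.\ the same orthogonality facts), your version is the same argument cleanly repackaged, with no circularity.
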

\begin{proof}
The first equality follows from Lemma \ref{lem:RemovePFromAMorphisms}.
The second equality follows from the fact that $\pi_a^\bfB(\theta_a(f))$ is orthogonal to $\cB^\bfB_a \setminus \theta(\cB_a^\bfA)$.
We omit the proof of the final equality, which is similar to the proof of Lemma \ref{lem:RemovePFromAMorphisms} using \eqref{eq:Isometry} together with \eqref{eq:IdentitiesForA(a)} and \eqref{eqn:OpenInnerProduct}.
\end{proof}

\begin{cor}
The morphism $Q(\theta) \in \cC(Q(\bfA), Q(\bfB))$ is an involutive algebra morphism.
\end{cor}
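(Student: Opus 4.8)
The plan is to show that $Q(\theta)\colon Q(\bfA)\to Q(\bfB)$ preserves the Q-system multiplication and unit of Proposition \ref{prop:W*ToFrobenius} and satisfies the involutivity equation $\sigma_{Q(\bfB)}\circ Q(\theta)=\overline{Q(\theta)}\circ\sigma_{Q(\bfA)}$. Both checks are driven by the bridge $Q(\theta)\circ\pi^\bfA_a(f)=\pi^\bfB_a(\theta_a(f))$ of Lemma \ref{lem:RemoveTheta}, together with the remark that a morphism out of $Q(\bfA)=\im(p_\bfA)$ is determined by its precompositions with the generators $\pi^\bfA_a(f)$: since $p_\bfA=\sum_{a\in\Irr(\cC)}\sum_{\alpha\in\cB^\bfA_a}\alpha\circ\alpha^*$ and each $\alpha$ is a scalar multiple of some $\pi^\bfA_a(f)$, the family $\{\pi^\bfA_a(f)\}$ (and likewise $\{\pi^\bfA_a(f)\otimes\pi^\bfA_b(g)\}$) is jointly epic onto $Q(\bfA)$ (resp.\ $Q(\bfA)\otimes Q(\bfA)$). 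It therefore suffices to verify each identity after precomposing with these generators.

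For the algebra-morphism property, the key observation is that the Frobenius multiplication $m_\bfA$ of Proposition \ref{prop:W*ToFrobenius} agrees, on the image of $\pi^\bfA$, with the laxitor of $\bfA$ transported through $\pi^\bfA$. Indeed, feeding $\pi^\bfA_a(f)\otimes\pi^\bfA_b(g)$ into $m_\bfA=d_{H_\bfA}^{1/2}(\text{three }p_\bfA\text{'s})$, the two lower copies of $p_\bfA$ are absorbed by Lemma \ref{lem:RemovePFromAMorphisms}, leaving $p_\bfA\circ\mu^{\bfH\otimes\overline{\bfH}}(\pi^\bfA_a(f)\otimes\pi^\bfA_b(g))$, which equals $\pi^\bfA_{a\otimes b}(\mu^\bfA_{a,b}(f\otimes g))$ because $\pi^\bfA$ is a $*$-algebra natural transformation (Definition \ref{defn:InducedRepresentation}) and Lemma \ref{lem:RemovePFromAMorphisms} absorbs the last $p_\bfA$. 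Chaining this with Lemma \ref{lem:RemoveTheta} and the fact that $\theta$ is an algebra natural transformation gives
\[
Q(\theta)\circ m_\bfA\circ(\pi^\bfA_a(f)\otimes\pi^\bfA_b(g))=\pi^\bfB_{a\otimes b}(\mu^\bfB_{a,b}(\theta_a(f)\otimes\theta_b(g)))=m_\bfB\circ(Q(\theta)\otimes Q(\theta))\circ(\pi^\bfA_a(f)\otimes\pi^\bfA_b(g)),
\]
so $Q(\theta)\circ m_\bfA=m_\bfB\circ(Q(\theta)\otimes Q(\theta))$. The unit is handled by $\pi^\bfA_{1_\cC}(i_\bfA)=d_{H_\bfA}^{-1/2}\coev_{H_\bfA}$ (Definition \ref{defn:InducedRepresentation}, Lemma \ref{lem:RemovePFromAMorphisms}), which is the Q-system unit of $Q(\bfA)$; unitality of $\theta$ then yields $Q(\theta)\circ(d_{H_\bfA}^{-1/2}\coev_{H_\bfA})=\pi^\bfB_{1_\cC}(\theta_{1_\cC}(i_\bfA))=\pi^\bfB_{1_\cC}(i_\bfB)=d_{H_\bfB}^{-1/2}\coev_{H_\bfB}$.

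For involutivity I would first isolate the single auxiliary identity
\[
\sigma_{Q(\bfA)}\circ\pi^\bfA_a(f)=\overline{\pi^\bfA_{\overline a}(j^\bfA_a(f))}\qquad(f\in\bfA(a)),
\]
(suppressing conjugation structure maps), which reads the canonical unitary $\sigma_{Q(\bfA)}$ of the Q-system through the generators as the $*$-structure $j^\bfA$ of $\bfA$. Granting this, involutivity is purely formal. Precomposing $\sigma_{Q(\bfB)}\circ Q(\theta)$ with $\pi^\bfA_a(f)$ and using Lemma \ref{lem:RemoveTheta} and the identity for $\bfB$ gives $\overline{\pi^\bfB_{\overline a}(j^\bfB_a(\theta_a(f)))}$; precomposing $\overline{Q(\theta)}\circ\sigma_{Q(\bfA)}$ with $\pi^\bfA_a(f)$ and using the identity for $\bfA$, covariance of conjugation, and Lemma \ref{lem:RemoveTheta} gives $\overline{\pi^\bfB_{\overline a}(\theta_{\overline a}(j^\bfA_a(f)))}$. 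These coincide exactly because $\theta$ is a $*$-algebra natural transformation, i.e.\ $\theta_{\overline a}\circ j^\bfA_a=j^\bfB_a\circ\theta_a$ (as in \eqref{eq:Isometry}), and joint epicness of the $\pi^\bfA_a(f)$ upgrades this to the equality of morphisms.

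I expect the auxiliary $\sigma$--$j$ identity to be the main obstacle. Unwinding $\sigma_{Q(\bfA)}$ from its definition $\sigma^L_A=(i^*_Am_A\otimes\id_{\overline A})\circ(\id_A\otimes\coev_{Q(\bfA)})$ for the concrete Q-system $\im(p_\bfA)$ requires rewriting $m_\bfA$, the unit, and $\coev_{Q(\bfA)}$ through the projector $p_\bfA$ and then simplifying with the structure-constant identities \eqref{eq:ReplaceABD} and \eqref{eq:ReplaceABC}, the normalizations \eqref{eq:IdentitiesForA(a)} and \eqref{eqn:OpenInnerProduct}, and the definition of $j^\bfA$ as conjugation of $\lambda$. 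Everything else in the corollary is bookkeeping resting on Lemmas \ref{lem:RemoveTheta} and \ref{lem:RemovePFromAMorphisms}.
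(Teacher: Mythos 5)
Your treatment of the unit and the multiplication is correct, and it takes a genuinely different route from the paper's. Where the paper proves $Q(\theta)\circ m_{Q(\bfA)} = m_{Q(\bfB)}\circ (Q(\theta)\otimes Q(\theta))$ by deriving the twisted structure-constant identity \eqref{eq:ReplaceThetaABC} and matching both sides against the tetrahedral expansion from Lemma \ref{lem:RemoveTopP}, you instead test both sides against the jointly epic family $\pi^\bfA_a(f)\otimes \pi^\bfA_b(g)$ --- legitimate, since $p_\bfA\otimes p_\bfA = \sum_{a,b}\sum_{\alpha,\beta}(\alpha\otimes\beta)\circ(\alpha^*\otimes\beta^*)$ --- and transport everything through multiplicativity of $\pi$ and Lemma \ref{lem:RemoveTheta}. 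Your generator computation $m_{Q(\bfA)}\circ(\pi^\bfA_a(f)\otimes\pi^\bfA_b(g)) = \pi^\bfA_{a\otimes b}(\mu^\bfA_{a,b}(f\otimes g))$ is exactly the computation the paper performs later, inside Theorem \ref{thm:W*toQandBack}, to show $\eta^\bfA$ intertwines multiplications; front-loading it buys you multiplicativity of $Q(\theta)$ with no tetrahedral bookkeeping at all. The only caveat is that your argument leans on the final equality of Lemma \ref{lem:RemoveTheta}, whose proof the paper itself omits, so the heavy lifting hidden there is roughly comparable to what \eqref{eq:ReplaceThetaABC} costs.

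The involutivity half, however, has a genuine gap: the whole argument is conditional on the auxiliary identity $\sigma_{Q(\bfA)}\circ\pi^\bfA_a(f) = \overline{\pi^\bfA_{\overline{a}}(j^\bfA_a(f))}$, which you explicitly defer as ``the main obstacle'' and never establish, offering only a list of tools for an unexecuted unwinding of $\sigma^L$. The identity is true, but the efficient way to get it --- and the way the paper actually closes the proof --- is to observe that $\sigma_{Q(\bfA)} = p_\bfA$ and $\sigma_{Q(\bfB)} = p_\bfB$ on the nose: unwinding $\sigma^L$ for the Q-system of Proposition \ref{prop:W*ToFrobenius} using Lemmas \ref{lem:SymmetricallySelfDualProjector} and \ref{lem:RemoveTopP} (the paper records this computation in Section \ref{sec:QtoWtoQ}). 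Once this is known, involutivity of $Q(\theta)$ collapses to the single equation $Q(\theta) = \overline{Q(\theta)}$, which holds because $Q(\theta)$ is independent of the choice of the bases $\cB^\bfA_a$ and conjugating a basis produces another basis, exactly as in the proof of Lemma \ref{lem:SymmetricallySelfDualProjector}. Your auxiliary identity then falls out for free: $p_\bfA\circ\pi^\bfA_a(f) = \pi^\bfA_a(f)$ by Lemma \ref{lem:RemovePFromAMorphisms}, and $\overline{\pi^\bfA_a(f)} = \pi^\bfA_{\overline{a}}(j^\bfA_a(f))$ is just the $*$-naturality of $\pi$ from Definition \ref{defn:InducedRepresentation}, made explicit inside Theorem \ref{thm:W*toQandBack}. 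So replace the deferred unwinding with the computation $\sigma_{Q(\bfA)} = p_\bfA$ plus bar-invariance of $Q(\theta)$, and your proof closes; as written, the hardest clause of the corollary rests on an unproven claim.
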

\begin{proof}
It is easy to see that $Q(\theta)$ is unital, since $\theta_{1_\cC}(i_\bfA) = i_\bfB$.
Thus by Lemma \ref{lem:RemoveTheta}, 
$$
d_{H_\bfA}^{-1/2}
Q(\theta)\circ \coev_{H_\bfA} 
= 
Q(\theta) \circ \pi_{1_\cC}^\bfA(i_\bfA) 
= 
\pi_{1_\cC}^\bfB(\theta(i_\bfA)) 
= 
\pi_{1_\cC}^\bfB(i_\bfB) 
= 
d_{H_\bfB}^{-1/2}
\coev_{H_\bfB}.
$$
To show $Q(\theta)$ is multiplicative, we first use multiplicativity of $\theta : \bfA \Rightarrow \bfB$ and $\pi^\bfB: \bfB\Rightarrow \bfL^2\bfB\otimes \overline{\bfL^2\bfB}$ 
and the relationship between the inner products \eqref{eq:Isometry}, \eqref{eq:IdentitiesForA(a)},  and \eqref{eqn:OpenInnerProduct}
to prove an identity similar to \eqref{eq:ReplaceABC}.
For all $a,b,c\in \Irr(\cC)$, $\alpha \in \cB_a$, $\beta \in \cB_b$, and $\gamma\in \cB_c$, 
expanding using the inner product on $\Isom(c, a\otimes b)$ yields
\begin{equation}
\label{eq:ReplaceThetaABC}
\begin{tikzpicture}[baseline=-.1cm]
	\draw (0,.6) -- (0,1.2);
	\draw (-.6,-.4) -- (-.6,-1);
	\draw (.6,-.4) -- (.6,-1);
	\draw (-.95,-.1) .. controls ++(90:.1cm) and ++(270:.2cm) .. (-.35,.3);
	\draw (.95,-.1) .. controls ++(90:.1cm) and ++(270:.2cm) .. (.35,.3);
	\draw (-.25,-.1) .. controls ++(90:.2cm) and ++(90:.2cm) .. (.25,-.1);
	\roundNbox{unshaded}{(-.6,-.4)}{.3}{.2}{.2}{$\theta(\alpha)$};
	\roundNbox{unshaded}{(.6,-.4)}{.3}{.2}{.2}{$\theta(\beta)$};
	\roundNbox{unshaded}{(0,.6)}{.3}{.2}{.2}{$\theta(\gamma)^*$};
	\node at (-1,.2) {\scriptsize{$H_\bfB$}};
	\node at (1,.2) {\scriptsize{$\overline{H_\bfB}$}};
	\node at (-.2,1.1) {\scriptsize{$c$}};
	\node at (-.8,-.9) {\scriptsize{$a$}};
	\node at (.8,-.9) {\scriptsize{$b$}};
\end{tikzpicture}
=
\frac{d_{H_\bfA}^{1/2}}{d_c d_{H_\bfB}^{1/2}}
\sum_{\delta \in \Isom(c, a\otimes b)}
\Delta\left(
\begin{array}{ccc} a & b & c \\ \alpha & \beta & \gamma \end{array}
\middle| 
\begin{array}{c} \delta \end{array}
\right)
\begin{tikzpicture}[baseline=-.1cm]
	\draw (-.3,-.3) arc (180:0:.3cm);
	\draw (0,0) -- (0,.3);
	\filldraw[fill = white] (0,0) circle (.05cm) node [below] {\scriptsize{$\delta^*$}};
	\node at (-.3,-.5) {\scriptsize{$a$}};
	\node at (.3,-.5) {\scriptsize{$b$}};
	\node at (0,.5) {\scriptsize{$c$}};
\end{tikzpicture}
\underset{\eqref{eq:ReplaceABC}}{=}
\frac{d_{H_\bfA}^{1/2}}{d_{H_\bfB}^{1/2}}
\begin{tikzpicture}[baseline=-.1cm]
	\draw (0,.6) -- (0,1.2);
	\draw (-.4,-.4) -- (-.4,-1);
	\draw (.4,-.4) -- (.4,-1);
	\draw (-.55,-.1) .. controls ++(90:.1cm) and ++(270:.2cm) .. (-.15,.3);
	\draw (.55,-.1) .. controls ++(90:.1cm) and ++(270:.2cm) .. (.15,.3);
	\draw (-.25,-.1) .. controls ++(90:.2cm) and ++(90:.2cm) .. (.25,-.1);
	\roundNbox{unshaded}{(-.4,-.4)}{.3}{0}{0}{$\alpha$};
	\roundNbox{unshaded}{(.4,-.4)}{.3}{0}{0}{$\beta$};
	\roundNbox{unshaded}{(0,.6)}{.3}{0}{0}{$\gamma^*$};
	\node at (-.7,.2) {\scriptsize{$H_\bfA$}};
	\node at (.7,.2) {\scriptsize{$\overline{H_\bfA}$}};
	\node at (-.2,1.1) {\scriptsize{$c$}};
	\node at (-.6,-.9) {\scriptsize{$a$}};
	\node at (.6,-.9) {\scriptsize{$b$}};
\end{tikzpicture}
.
\end{equation}
Now, similar to the proof of Lemma \ref{lem:RemoveTopP}, we have
\begin{align*}
m_{Q(\bfB)} \circ &(Q(\theta) \otimes Q(\theta))
\\&:=
d_{H_\bfB}^{1/2}
\begin{tikzpicture}[baseline=-.1cm]
	\draw (-.95,-1) -- (-.95,-.1) .. controls ++(90:.1cm) and ++(270:.2cm) .. (-.35,.3) -- (-.35,1.2);
	\draw (.95,-1) -- (.95,-.1) .. controls ++(90:.1cm) and ++(270:.2cm) .. (.35,.3) -- (.35,1.2);
	\draw (-.25,-1) -- (-.25,-.1) .. controls ++(90:.2cm) and ++(90:.2cm) .. (.25,-.1) -- (.25,-1);
	\roundNbox{unshaded}{(-.6,-.4)}{.3}{.2}{.2}{$Q(\theta)$};
	\roundNbox{unshaded}{(.6,-.4)}{.3}{.2}{.2}{$Q(\theta)$};
	\roundNbox{unshaded}{(0,.6)}{.3}{.2}{.2}{$p_\bfB$};
\end{tikzpicture}
=
d_{H_\bfB}^{1/2}
\begin{tikzpicture}[baseline=-.1cm]
	\draw (-.95,-1) -- (-.95,-.1) .. controls ++(90:.1cm) and ++(270:.2cm) .. (-.35,.3) -- (-.35,1.2);
	\draw (.95,-1) -- (.95,-.1) .. controls ++(90:.1cm) and ++(270:.2cm) .. (.35,.3) -- (.35,1.2);
	\draw (-.25,-1) -- (-.25,-.1) .. controls ++(90:.2cm) and ++(90:.2cm) .. (.25,-.1) -- (.25,-1);
	\roundNbox{unshaded}{(-.6,-.4)}{.3}{.2}{.2}{$Q(\theta)$};
	\roundNbox{unshaded}{(.6,-.4)}{.3}{.2}{.2}{$Q(\theta)$};
	\roundNbox{unshaded}{(0,.6)}{.3}{.3}{.3}{$\theta(p_\bfA)$};
\end{tikzpicture}
&& 
\text{(Lem.~\ref{lem:RemoveTopP})}
\\&=
d_{H_{\bfA}}^{1/2}
\sum_{a,b,c\in \Irr(\cC)} 
d_c^{-1}
\sum_{\substack{
\alpha\in \cB_a
\\
\beta\in\cB_b
\\
\gamma\in\cB_c
}}
\sum_{\delta\in \Isom(c, a\otimes b)}
\Delta\left(
\begin{array}{ccc} a & b & c \\ \alpha & \beta & \gamma \end{array}
\middle| 
\begin{array}{c} \delta \end{array}
\right)
\begin{tikzpicture}[baseline=.3cm]
	\draw (-.9,-.6) arc (180:0:.3cm);
	\draw (.3,-.6) arc (180:0:.3cm);
	\draw (-.6,-.3) arc (180:0:.6cm);
	\draw (0,.3) -- (0,.9);
	\draw (-.4,1.3) arc (-180:0:.4cm);
	\filldraw[fill = white] (-.6,-.3) circle (.05cm) node [below] {\scriptsize{$\alpha^*$}};
	\filldraw[fill = white] (.6,-.3) circle (.05cm) node [below] {\scriptsize{$\beta^*$}};
	\filldraw[fill = white] (0,.3) circle (.05cm) node [below] {\scriptsize{$\delta^*$}};
	\filldraw[fill = white] (0,.9) circle (.05cm) node [above] {\scriptsize{$\theta(\gamma)$}};
	\node at (-.9,-.8) {\scriptsize{$H_\bfA$}};
	\node at (-.3,-.8) {\scriptsize{$\overline{H_\bfA}$}};
	\node at (.9,-.8) {\scriptsize{$\overline{H_\bfA}$}};
	\node at (.3,-.8) {\scriptsize{$H_\bfA$}};
	\node at (.3,1.5) {\scriptsize{$\overline{H_\bfB}$}};
	\node at (-.3,1.5) {\scriptsize{$H_\bfB$}};
	\node at (-.8,-.1) {\scriptsize{$a$}};
	\node at (.8,-.1) {\scriptsize{$b$}};
	\node at (.2,.6) {\scriptsize{$c$}};
\end{tikzpicture}
&&
\text{(by \eqref{eq:ReplaceThetaABC})}
\\&=
d_{H_\bfA}^{1/2}
\begin{tikzpicture}[baseline=-.1cm]
	\draw (-.55,-1) -- (-.55,-.1) .. controls ++(90:.1cm) and ++(270:.2cm) .. (-.15,.3) -- (-.15,1.2);
	\draw (.55,-1) -- (.55,-.1) .. controls ++(90:.1cm) and ++(270:.2cm) .. (.15,.3) -- (.15,1.2);
	\draw (-.25,-1) -- (-.25,-.1) .. controls ++(90:.2cm) and ++(90:.2cm) .. (.25,-.1) -- (.25,-1);
	\roundNbox{unshaded}{(-.4,-.4)}{.3}{0}{0}{$p_\bfA$};
	\roundNbox{unshaded}{(.4,-.4)}{.3}{0}{0}{$p_\bfA$};
	\roundNbox{unshaded}{(0,.6)}{.3}{.2}{.2}{$Q(\theta)$};
\end{tikzpicture}
=:
Q(\theta) \circ m_{Q(\bfA)}
&&
\text{(by \eqref{eq:ReplaceABC}).}
\end{align*}

Finally, $\sigma_{Q(\bfA)} = p_\bfA$ and $\sigma_{Q(\bfB)} = p_\bfB$, so invoutivity of $Q(\theta)$ is equivalent to $Q(\theta) = \overline{Q(\theta)}$, which is easily seen to hold by independence of the choice of $\cB_a^\bfA$ as in the proof of Lemma \ref{lem:SymmetricallySelfDualProjector}.
(This suppresses the isomorphism between real objects of the form $c\otimes \overline{c}\in \cC$ and their conjugates as in Example \ref{ex:StarAlgebraFromInnerHom}.)
\end{proof}

\begin{prop}
$Q$ is a functor.
\end{prop}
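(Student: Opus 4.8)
The plan is to verify the two functor axioms—preservation of identities and of composition—using the particular statement of Lemma \ref{lem:RemoveTheta}, namely $\pi_a^\bfB(\theta_a(f)) = Q(\theta)\circ \pi_a^\bfA(f)$, which computes the action of $Q(\theta)$ on the spanning morphisms $\pi_a^\bfA(f)$. First I would record the structural facts already available. Writing $\alpha = d_a^{1/2}\pi_a^\bfA(f)\in \cC(a, H_\bfA\otimes\overline{H_\bfA})$ for $\alpha\in\cB_a^\bfA$, orthonormality under \eqref{eqn:OpenInnerProduct} identifies the diagrams defining $p_\bfA$ and $Q(\theta)$ as
$$
p_\bfA = \sum_{a\in\Irr(\cC)}\sum_{\alpha\in\cB_a^\bfA}\alpha\circ\alpha^* \qquad\text{and}\qquad Q(\theta) = \sum_{a\in\Irr(\cC)}\sum_{\alpha\in\cB_a^\bfA}\theta(\alpha)\circ\alpha^*,
$$
and composing the second with $\alpha'\in\cB_{a'}^\bfA$ and using $\alpha^*\circ\alpha'=\delta_{\alpha=\alpha'}\id_a$ recovers exactly $Q(\theta)\circ\alpha = \theta(\alpha)$. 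Since $Q(\theta)$ is a partial isometry with $Q(\theta)^*Q(\theta)=p_\bfA$, the identity $Q(\theta)\circ p_\bfA = Q(\theta)\circ Q(\theta)^*\circ Q(\theta) = Q(\theta)$ shows that any morphism $M$ with $M = M\circ p_\bfA$ is determined by its values $M\circ\alpha$ on all such $\alpha$.

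For identities, taking $\theta=\id_\bfA$ gives $\theta(\alpha)=\alpha$, so $Q(\id_\bfA) = \sum_{a,\alpha}\alpha\circ\alpha^* = p_\bfA$, which is precisely $\id_{Q(\bfA)}$ on $Q(\bfA)=\im(p_\bfA)$.

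For composition, given $\theta_1\colon\bfA\Rightarrow\bfB$ and $\theta_2\colon\bfB\Rightarrow\bfC$, I would apply Lemma \ref{lem:RemoveTheta} twice. For each $\alpha\in\cB_a^\bfA$, the element $\theta_1(\alpha)=d_a^{1/2}\pi_a^\bfB(\theta_{1,a}(f))$ lies in $\theta_1(\cB_a^\bfA)\subset\cB_a^\bfB$, so the lemma applies to $\theta_2$ at the basis element $\theta_1(\alpha)$, yielding
$$
Q(\theta_2)\circ Q(\theta_1)\circ\alpha = Q(\theta_2)\circ\theta_1(\alpha) = \theta_2(\theta_1(\alpha)) = (\theta_2\circ\theta_1)(\alpha) = Q(\theta_2\circ\theta_1)\circ\alpha,
$$
where the middle equality is just the compatibility of the abuse of notation $\theta(\cdot)$ with composition at the level of the underlying vectors, since $(\theta_2\circ\theta_1)_a(f)=\theta_{2,a}(\theta_{1,a}(f))$. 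Both $Q(\theta_2\circ\theta_1)$ and $Q(\theta_2)\circ Q(\theta_1)$ factor through $p_\bfA$ on the right (the latter because $Q(\theta_1)=Q(\theta_1)\circ p_\bfA$), so composing the agreement on each $\alpha$ with $p_\bfA=\sum_{a,\alpha}\alpha\circ\alpha^*$ gives $Q(\theta_2\circ\theta_1)=Q(\theta_2)\circ Q(\theta_1)$.

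The main obstacle here is bookkeeping rather than mathematics: the substantive work has already been done in Lemma \ref{lem:RemoveTheta} and the preceding corollary guaranteeing that $Q(\theta)$ is an involutive algebra morphism. I expect the only points needing care to be the legitimacy of the ``determined on the $\alpha$'s'' reduction—i.e. the partial-isometry relations $Q(\theta)^*Q(\theta)=p_\bfA$ and the consequent $Q(\theta)\circ p_\bfA=Q(\theta)$—and the functoriality of the symbol $\theta(\alpha)$ in $\theta$, which follows immediately from its definition through the orthonormal bases $\ONB(\bfA(a))$ and the compatibility of the chosen bases under $\theta$ established via \eqref{eq:Isometry}.
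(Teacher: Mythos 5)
Your proof is correct, and it reaches the paper's conclusion by a somewhat different mechanism. The paper proves compositionality by a direct collapse of the double sum: it extends the basis convention one step further along the chain, choosing $\ONB(\bfC(a))$ to contain $\theta_2(\ONB(\bfB(a)))$, so that in $Q(\theta_2)\circ Q(\theta_1)=\sum_{b,\beta}\theta_2(\beta)\circ\beta^*\circ\sum_{a,\alpha}\theta_1(\alpha)\circ\alpha^*$ the middle factor $\beta^*\circ\theta_1(\alpha)$ equals $\delta_{\beta=\theta_1(\alpha)}\id_a$ by \eqref{eqn:OpenInnerProduct}, leaving exactly $\sum_{a,\alpha}\theta_2(\theta_1(\alpha))\circ\alpha^*=Q(\theta_2\circ\theta_1)$. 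You instead make Lemma \ref{lem:RemoveTheta} the workhorse: its conclusion $Q(\theta)\circ\pi_a^\bfA(f)=\pi_a^\bfB(\theta_a(f))$ is basis-independent and holds for \emph{arbitrary} $f$ — in particular it applies verbatim to the composite $\theta_2\circ\theta_1$, which is again a $*$-algebra natural transformation between compact connected W*-algebra objects — so two applications give agreement of $Q(\theta_2)\circ Q(\theta_1)$ and $Q(\theta_2\circ\theta_1)$ on every $\alpha\in\cB_a^\bfA$, and your reduction to this spanning set via $M=M\circ p_\bfA=\sum_{a,\alpha}(M\circ\alpha)\circ\alpha^*$ is legitimate for both morphisms, the right factorization through $p_\bfA$ being supplied by the partial-isometry relation $Q(\theta)^*Q(\theta)=p_\bfA$ already recorded in the paper. (A minor remark: since the lemma holds for all elements of $\bfB(a)$, your aside that $\theta_1(\alpha)$ lies in the chosen basis $\cB_a^\bfB$ is not actually needed at that step.) Both proofs ultimately rest on the same orthogonality facts, but yours buys a small bookkeeping advantage — no compatible-basis choice is needed for the triple $\bfA,\bfB,\bfC$, since everything is channeled through the basis-free statement of Lemma \ref{lem:RemoveTheta} — at the cost of invoking that lemma, which the paper's own proof of this proposition does not use (the paper deploys it only later, for naturality of $\eta$). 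The identity case is handled identically in both: $Q(\id_\bfA)=p_\bfA=\id_{Q(\bfA)}$.
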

\begin{proof}
It remains to show that $Q$ preserves identities and composites.
First, if $\theta = \id_\bfA$, then $Q(\theta) = p_\bfA = \id_{Q(\bfA)}$.
Second, if $\theta_1: \bfA \Rightarrow \bfB$ and $\theta_2 : \bfB \Rightarrow \bfC$, then for each $a\in \cC$, we choose $\ONB(\cC(a))$ so as to include $\theta_2(\ONB(\bfB(a)))$ just as we chose $\ONB(\bfB(a))$ to include $\theta_1(\ONB(\bfA(a))$.
Now for all $\alpha \in \cB^\bfA_a$, $\theta_1(\alpha)$ is orthogonal to $\cB^\bfB_a\setminus \theta_1(\cB_a^\bfA)$ under inner product \eqref{eqn:OpenInnerProduct}, and thus
\begin{align*}
Q(\theta_2)Q(\theta_1) 
&=
\sum_{a,b\in\Irr(\cC)}
\sum_{\substack{
\alpha \in \cB_a^\bfA
\\
\beta \in \cB_b^\bfB
}}
\begin{tikzpicture}[baseline=-.1cm]
	\draw (-.4,1.8) -- (-.4,2);
	\draw (.4,1.8) -- (.4,2);
	\draw (-.4,-.2) -- (-.4,.2);
	\draw (.4,-.2) -- (.4,.2);
	\draw (-.4,-1.8) -- (-.4,-2);
	\draw (.4,-1.8) -- (.4,-2);
	\draw (0,.8) -- (0,1.2);
	\draw (0,-.8) -- (0,-1.2);
	\roundNbox{unshaded}{(0,1.5)}{.3}{.4}{.4}{$\theta_2(\beta)$}
	\roundNbox{unshaded}{(0,.5)}{.3}{.4}{.4}{$\beta^*$}
	\roundNbox{unshaded}{(0,-.5)}{.3}{.4}{.4}{$\theta_1(\alpha)$}
	\roundNbox{unshaded}{(0,-1.5)}{.3}{.4}{.4}{$\alpha^*$}
	\node at (-.4,-2.2) {\scriptsize{$H_\bfA$}};
	\node at (-.7,0) {\scriptsize{$H_\bfB$}};
	\node at (-.4,2.2) {\scriptsize{$H_\bfC$}};
	\node at (.4,-2.2) {\scriptsize{$\overline{H_\bfA}$}};
	\node at (.7,0) {\scriptsize{$\overline{H_\bfB}$}};
	\node at (.4,2.2) {\scriptsize{$\overline{H_\bfC}$}};
	\node at (-.2,1) {\scriptsize{$b$}};
	\node at (-.2,-1) {\scriptsize{$a$}};
\end{tikzpicture}
=
\sum_{a\in\Irr(\cC)}
\sum_{\alpha \in \cB_a^\bfA}
\begin{tikzpicture}[baseline=-.1cm]
	\draw (-.4,.8) -- (-.4,1);
	\draw (.4,.8) -- (.4,1);
	\draw (-.4,-.8) -- (-.4,-1);
	\draw (.4,-.8) -- (.4,-1);
	\draw (0,-.2) -- (0,.2);
	\roundNbox{unshaded}{(0,.5)}{.3}{.6}{.6}{$\theta_2(\theta_1(\alpha))$}
	\roundNbox{unshaded}{(0,-.5)}{.3}{.4}{.4}{$\alpha^*$}
	\node at (-.4,-1.2) {\scriptsize{$H_\bfA$}};
	\node at (-.4,1.2) {\scriptsize{$H_\bfC$}};
	\node at (.4,-1.2) {\scriptsize{$\overline{H_\bfA}$}};
	\node at (.4,1.2) {\scriptsize{$\overline{H_\bfC}$}};
	\node at (-.2,0) {\scriptsize{$a$}};
\end{tikzpicture}
=
Q(\theta_2\circ \theta_1).
\qedhere
\end{align*}
\end{proof}

\section{Equivalence of Q-systems and W*-algebra objects}

We now prove the the functors $\bfW\text{*}$ and $Q$ constructed in Sections \ref{sec:QtoW} and \ref{sec:WtoQ} respectively witness an equivalence of categories.

\subsection{From W*-algebras to Q-systems and back}
\label{sec:WtoQtoW}

We begin by building a natural isomorphism $\eta : \id \Rightarrow \WStar\circ Q$.

Suppose we have a compact connected W*-algebra object $(\bfA, \mu^\bfA, i_\bfA, j^\bfA) \in \Vec(\cC)$ together with its canonical GNS faithful $*$-algebra natural transformation $\lambda: \bfA \Rightarrow \bfB(\bfL^2\bfA)$.
Recall that $Q(\bfA) = \im(p_\bfA) \subset H_\bfA\otimes \overline{H_\bfA}\in \cC$ is an irreducible Q-system by Proposition \ref{prop:W*ToFrobenius}.
Consider the corresponding compact connected W*-algebra object $\WStar Q(\bfA) \in \Vec(\cC)$ as in Proposition \ref{prop:FrobeniusToW*}, and notice that for all $a\in \cC$, $\WStar Q(\bfA)(a) = \cC(a, Q(\bfA))$ with multiplication and unit induced from $Q(\bfA)$, and $*$-structure given by conjugation, which is induced from the adjoint on the cyclic $\cC$-module W*-category $(\cC, B)$.

\begin{thm}
\label{thm:W*toQandBack}
The GNS representation $\lambda: \bfA\Rightarrow \bfB(\bfL^2\bfA)$ induces a canonical $*$-algebra natural isomorphism $\eta^\bfA:\bfA \Rightarrow \bfW$\emph{*}$Q(\bfA)$.
\end{thm}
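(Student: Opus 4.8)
The plan is to define $\eta^\bfA$ as the corestriction of the faithful $*$-algebra natural transformation $\pi:\bfA\Rightarrow\bfH\otimes\overline{\bfH}$ from Definition \ref{defn:InducedRepresentation} to the subobject $Q(\bfA)=\im(p_\bfA)\subseteq H\otimes\overline{H}$, where $\bfH=\bfL^2\bfA$ corresponds to $H\in\cC$. Explicitly, for $a\in\cC$ and $f\in\bfA(a)$, Lemma \ref{lem:RemovePFromAMorphisms} gives $p_\bfA\circ\pi_a(f)=\pi_a(f)$, so $\pi_a(f)$ factors through $\im(p_\bfA)=Q(\bfA)$; I set $\eta^\bfA_a(f)\in\cC(a,Q(\bfA))=\WStar Q(\bfA)(a)$ to be that factorization. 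Naturality of $\eta^\bfA$ is then inherited from naturality of $\pi$, since corestriction along the inclusion of $Q(\bfA)$ is natural. So the real content is to check that $\eta^\bfA$ is unital, multiplicative, $*$-preserving, and invertible.

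For multiplicativity, recall that the multiplication on $\WStar Q(\bfA)$ is $f\otimes g\mapsto m_{Q(\bfA)}\circ(f\otimes g)$ with $m_{Q(\bfA)}=d_H^{1/2}$ times the three-$p$ diagram of Proposition \ref{prop:W*ToFrobenius}. To evaluate $m_{Q(\bfA)}\circ(\eta^\bfA_a(f)\otimes\eta^\bfA_b(g))$ inside $H\otimes\overline{H}$, I would use Corollary \ref{cor:RemoveAnyP} together with $p_\bfA\circ\pi=\pi$ (Lemma \ref{lem:RemovePFromAMorphisms}) to drop the two bottom copies of $p_\bfA$, leaving $p_\bfA$ composed with the single cap $\id_H\otimes\ev_H\otimes\id_{\overline{H}}$ applied to $\pi_a(f)\otimes\pi_b(g)$. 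By Example \ref{ex:StarAlgebraFromInnerHom} with $c=H$, this cap weighted by $d_H^{1/2}$ is exactly the convolution product of $\bfH\otimes\overline{\bfH}$, so multiplicativity of $\pi$ identifies the expression with $p_\bfA\circ\pi_{a\otimes b}(\mu^\bfA(f\otimes g))=\pi_{a\otimes b}(\mu^\bfA(f\otimes g))$, again by Lemma \ref{lem:RemovePFromAMorphisms}, which is $\eta^\bfA_{a\otimes b}(\mu^\bfA(f\otimes g))$. Unitality is immediate, since $\pi_{1_\cC}(i_\bfA)=d_H^{-1/2}\coev_H$ is precisely the unit of $Q(\bfA)$ recorded in Lemma \ref{lem:RemovePFromAMorphisms}.

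For the $*$-structures, since $\pi$ is a $*$-algebra natural transformation it carries $j^\bfA$ to the $*$-structure on $\bfH\otimes\overline{\bfH}$, which by Example \ref{ex:StarAlgebraFromInnerHom} is simply conjugation $f\mapsto\overline{f}$. The $*$-structure on $\WStar Q(\bfA)$ is $f\mapsto\sigma_{Q(\bfA)}^{-1}\circ\overline{f}$, and $\sigma_{Q(\bfA)}$ is carried to $p_\bfA$ under the real structure of $H\otimes\overline{H}$ (as used in the morphism section); since $p_\bfA$ restricts to the identity on $Q(\bfA)=\im(p_\bfA)$, the two $*$-structures agree on the image of $\pi$, so $\eta^\bfA$ intertwines them, suppressing the isomorphism between the real object $H\otimes\overline{H}$ and its conjugate exactly as in Example \ref{ex:StarAlgebraFromInnerHom}.

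Finally, for bijectivity: injectivity of each $\eta^\bfA_a$ follows from faithfulness of $\lambda$, hence of $\pi$, because $\eta^\bfA_a(f)=0$ forces $\pi_a(f)=p_\bfA\circ\pi_a(f)=0$ and so $f=0$. For surjectivity I would reduce to $a\in\Irr(\cC)$ by semisimplicity and naturality; there, writing $p_\bfA=\sum_{b,\beta}\beta\circ\beta^*$ with $\beta$ ranging over the bases $\cB_b$ orthonormal under \eqref{eqn:OpenInnerProduct}, any $\phi\in\cC(a,H\otimes\overline{H})$ gives $p_\bfA\circ\phi=\sum_{b,\beta}\beta\circ(\beta^*\circ\phi)$ with $\beta^*\circ\phi\in\cC(a,b)$ nonzero only for $b\cong a$, so $\cC(a,Q(\bfA))$ is spanned by $\cB_a=\{d_a^{1/2}\pi_a(f):f\in\ONB(\bfA(a))\}\subseteq\im(\eta^\bfA_a)$; an injection of finite-dimensional spaces of equal dimension is an isomorphism. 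I expect the multiplicativity computation to be the most involved step, but because Corollary \ref{cor:RemoveAnyP} and Lemma \ref{lem:RemovePFromAMorphisms} are tailored precisely to collapse the three-$p$ Q-system product onto the ambient convolution product, the genuine obstacle is recognizing $\cB_a$ as a basis of $\cC(a,Q(\bfA))$ for simple $a$, which is what makes $\eta^\bfA$ surjective.
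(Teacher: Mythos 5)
Your proposal is correct and takes essentially the same route as the paper: both define $\eta^\bfA$ as $\pi^\bfA$ corestricted to $\im(p_\bfA)$ via Lemma \ref{lem:RemovePFromAMorphisms}, verify unitality and multiplicativity by collapsing the three-$p$ product of $Q(\bfA)$ onto the convolution product of $\bfH\otimes\overline{\bfH}$ using Corollary \ref{cor:RemoveAnyP}, and handle the $*$-structures via $\sigma_{Q(\bfA)}=p_\bfA$. The only differences are presentational: you supply the surjectivity argument (that $\cC(a,\im(p_\bfA))$ is spanned by $\cB_a$ for simple $a$) which the paper asserts without proof, while the multiplicativity and $*$-compatibility of $\pi$ that you cite as a black box from Definition \ref{defn:InducedRepresentation} are exactly what the paper's proof verifies explicitly in terms of $\lambda$.
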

\begin{proof}
Using the shorthand $\bfH_\bfA = \bfL^2\bfA$, let $\pi^\bfA: \bfA \Rightarrow \bfH_\bfA\otimes \overline{\bfH_\bfA}$ be the induced faithful $*$-algebra natural transformation as in Definition \ref{defn:InducedRepresentation}.
Note that $\pi_a^\bfA(\bfA(a))$ is exactly equal to
$$
\WStar Q(a) = \cC(a, Q(\bfA)) =\cC(a, \im(p_\bfA))\subset \cC(a, H_\bfA\otimes \overline{H_\bfA}).
$$
We define $\eta^\bfA : \bfA \Rightarrow \WStar Q(\bfA)$ by $\eta^\bfA_a(f) = p_\bfA \circ \pi^\bfA_a(f) \in \cC(a, \im(p_\bfA))$, i.e., $\eta^\bfA$ is just $\pi^\bfA$ considered as a morphism $\bfA \Rightarrow \WStar Q(\bfA)$ rather than $\bfA \Rightarrow \bfH_\bfA \otimes \overline{\bfH_\bfA}$.

To show that $\eta^\bfA: \bfA \Rightarrow \WStar Q(\bfA)$ is a $*$-algebra natural isomorphism, it remains to show that $\eta^\bfA$ intertwines the unit,  multiplication, and $*$-structures of $\bfA$ and $\WStar Q(\bfA)$.
As in Lemma \ref{lem:RemovePFromAMorphisms}, 
$$
\pi_{1_\cC}^\bfA(i_\bfA) = 
d_{H_\bfA}^{-1/2}
\begin{tikzpicture}[baseline=-.1cm]
	\draw (0,0) -- (0,.7);
	\draw[dotted, thick] (-.3,-.3) -- (-.3,-.7);
	\draw (.3,-.3) arc (-180:0:.3cm) -- (.9,.7);
	\roundNbox{unshaded}{(0,0)}{.3}{.4}{.4}{$\lambda^\bfA_a(i_\bfA)$}
	\node at (-.5,-.5) {\scriptsize{$1$}};
	\node at (-.3,.5) {\scriptsize{$H_\bfA$}};
	\node at (1.2,0) {\scriptsize{$\overline{H_\bfA}$}};
\end{tikzpicture}
=
d_{H_\bfA}^{-1/2}
\coev_{H_\bfA}
=
i_{\WStar Q(\bfA)}.
$$
If $f\in \bfA(a)$ and $g\in \bfA(b)$, then we see that
\begin{align*}
\mu^{\WStar Q(\bfA)}_{a,b}(\pi^\bfA_a(f)\otimes \pi^\bfA_b(g))
&=
\frac{d_{H_\bfA}^{1/2}}{d_{H_\bfA}}
\begin{tikzpicture}[baseline=-.1cm]
	\draw (0,0) -- (0,.7);
	\draw (-.3,-.3) -- (-.3,-.7);
	\draw (1.7,-.3) -- (1.7,-.7);
	\draw (.3,-.3) arc (-180:0:.3cm) -- (.9,.3) .. controls ++(90:.5cm) and ++(90:.5cm) .. (2,.3);
	\draw (2.3,-.3) arc (-180:0:.3cm) -- (2.9,.7);
	\roundNbox{unshaded}{(0,0)}{.3}{.4}{.4}{$\lambda^\bfA_a(f)$}
	\roundNbox{unshaded}{(2,0)}{.3}{.4}{.4}{$\lambda^\bfA_b(g)$}
	\node at (-.5,-.5) {\scriptsize{$a$}};
	\node at (1.5,-.5) {\scriptsize{$b$}};
	\node at (-.3,.5) {\scriptsize{$H_\bfA$}};
	\node at (.6,.5) {\scriptsize{$\overline{H_\bfA}$}};
	\node at (3.2,0) {\scriptsize{$\overline{H_\bfA}$}};
\end{tikzpicture}
=
d_{H_\bfA}^{-1/2}
\begin{tikzpicture}[baseline=.4cm]
	\draw (0,0) -- (0,1.7);
	\draw (-.3,-.3) -- (-.3,-.7);
	\draw (-.8,.7) -- (-.8,-.7);
	\draw (.3,-.3) arc (-180:0:.3cm) -- (.9,1.7);
	\roundNbox{unshaded}{(0,0)}{.3}{.3}{.3}{$\lambda^\bfA_b(g)$}
	\roundNbox{unshaded}{(0,1)}{.3}{.8}{.3}{$\lambda^\bfA_a(f)$}
	\node at (-1,-.5) {\scriptsize{$a$}};
	\node at (-.5,-.5) {\scriptsize{$b$}};
	\node at (-.3,.5) {\scriptsize{$H_\bfA$}};
	\node at (-.3,1.5) {\scriptsize{$H_\bfA$}};
	\node at (1.2,.5) {\scriptsize{$\overline{H_\bfA}$}};
\end{tikzpicture}
\\&=
d_{H_\bfA}^{-1/2}
\begin{tikzpicture}[baseline=-.1cm]
	\draw (0,0) -- (0,.7);
	\draw (-1.2,-.3) -- (-1.2,-.7);
	\draw (-.8,-.3) -- (-.8,-.7);
	\draw (1.2,-.3) arc (-180:0:.3cm) -- (1.8,.7);
	\roundNbox{unshaded}{(0,0)}{.3}{1.4}{1.3}{$\lambda^\bfA_{a\otimes b}(\mu^\bfA_{a\otimes b}(f\otimes g))$}
	\node at (-1,-.5) {\scriptsize{$a$}};
	\node at (-.6,-.5) {\scriptsize{$b$}};
	\node at (-.3,.5) {\scriptsize{$H_\bfA$}};
	\node at (2.1,0) {\scriptsize{$\overline{H_\bfA}$}};
\end{tikzpicture}
=
\pi_{a\otimes b}^\bfA(\mu^\bfA_{a,b}(f\otimes g)).
\end{align*}
Finally, if $f\in \bfA(a)$, using \eqref{eq:BarStar}, together with the fact that $\lambda$ is preserves the $*$-structure, and the correspondence between the $*$-structure of $\bfA$ and the dagger structure of $\cM_\bfA$ from \cite[Thm.~3.20]{1611.04620}, we have that
\begin{align*}
j_a^{\WStar Q(A)}(\pi_a^\bfA(f)) 
&=
d_{H_\bfA}^{-1/2}
\begin{tikzpicture}[baseline=-.1cm, xscale =-1]
	\draw (0,0) -- (0,.7);
	\draw (-.3,-.3) -- (-.3,-.7);
	\draw (.3,-.3) arc (-180:0:.3cm) -- (.9,.7);
	\roundNbox{unshaded}{(0,0)}{.3}{.4}{.4}{$\overline{\lambda_a^\bfA(f)}$}
	\node at (-.5,-.5) {\scriptsize{$\overline{a}$}};
	\node at (-.3,.6) {\scriptsize{$\overline{H_\bfA}$}};
	\node at (1.2,.5) {\scriptsize{$H_\bfA$}};
\end{tikzpicture}
=
d_{H_\bfA}^{-1/2}
\begin{tikzpicture}[baseline=-.1cm, xscale =-1]
	\draw (0,0) -- (0,.7);
	\draw (-.6,-.3) -- (-.6,-.7);
	\draw (.6,-.3) arc (-180:0:.3cm) -- (1.2,.7);
	\roundNbox{unshaded}{(0,0)}{.3}{.6}{.6}{$(\lambda^\bfA_a(f)^*)^\vee$}
	\node at (-.8,-.5) {\scriptsize{$\overline{a}$}};
	\node at (-.3,.6) {\scriptsize{$\overline{H_\bfA}$}};
	\node at (1.5,.5) {\scriptsize{$H_\bfA$}};
\end{tikzpicture}
\\&=
d_{H_\bfA}^{-1/2}
\begin{tikzpicture}[baseline=-.1cm]
	\draw (0,-.3) .. controls ++(270:.5cm) and ++(270:.5cm) .. (.9,-.3) -- (.9,.7);
	\draw (.3,.3) -- (.3,.7);
	\draw (-.3,.3) arc (0:180:.3cm) -- (-.9,-.7);
	\roundNbox{unshaded}{(0,0)}{.3}{.4}{.4}{$\lambda^\bfA_a(f)^*$}
	\node at (-1.1,-.5) {\scriptsize{$\overline{a}$}};
	\node at (1.2,.5) {\scriptsize{$\overline{H_\bfA}$}};
	\node at (.6,.5) {\scriptsize{$H_\bfA$}};
\end{tikzpicture}
=
d_{H_\bfA}^{-1/2}
\begin{tikzpicture}[baseline=-.1cm]
	\draw (0,0) -- (0,.7);
	\draw (-.5,-.3) -- (-.5,-.7);
	\draw (.5,-.3) arc (-180:0:.3cm) -- (1.1,.7);
	\roundNbox{unshaded}{(0,0)}{.3}{.7}{.6}{$\lambda^\bfA_{\overline{a}}(j_a^\bfA(f))$}
	\node at (-.7,-.5) {\scriptsize{$\overline{a}$}};
	\node at (-.3,.6) {\scriptsize{$\overline{H_\bfA}$}};
	\node at (1.4,.5) {\scriptsize{$H_\bfA$}};
\end{tikzpicture}
=
\pi^\bfA_{\overline{a}}(j^\bfA_a(f)).
\end{align*}
This completes the proof.
\end{proof}

\begin{prop}
The $*$-algebra isomorphism $\eta^\bfA : \bfA \Rightarrow \bfW$\emph{*}$Q(\bfA)$ is natural in $\bfA$.
\end{prop}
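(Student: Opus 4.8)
The plan is to verify directly that the naturality square commutes on components. Concretely, given a $*$-algebra natural transformation $\theta\colon \bfA\Rightarrow\bfB$ of compact connected W*-algebra objects, I must show that $\WStar Q(\theta)\circ \eta^\bfA = \eta^\bfB\circ\theta$ as $*$-algebra natural transformations $\bfA\Rightarrow \WStar Q(\bfB)$. Since both of these are honest natural transformations, it suffices to check equality of the $a$-component on an arbitrary element $f\in\bfA(a)$ for each $a\in\cC$.

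First I would unwind the left-hand side. Recall from Section~\ref{sec:QtoW} that $\WStar$ sends the involutive algebra morphism $Q(\theta)\in\cC(Q(\bfA),Q(\bfB))$ to the $*$-algebra natural transformation whose $a$-component is post-composition with $Q(\theta)$, i.e.\ $\WStar Q(\theta)_a(g) = Q(\theta)\circ g$ for $g\in \cC(a,Q(\bfA)) = \WStar Q(\bfA)(a)$. Moreover $\eta^\bfA_a(f) = p_\bfA\circ \pi^\bfA_a(f)$, which equals $\pi^\bfA_a(f)$ by Lemma~\ref{lem:RemovePFromAMorphisms}. Hence the left-hand side evaluates to
\[
\WStar Q(\theta)_a(\eta^\bfA_a(f)) = Q(\theta)\circ \pi^\bfA_a(f).
\]
On the right-hand side, the same lemma applied to $\bfB$ gives $\eta^\bfB_a(\theta_a(f)) = p_\bfB\circ\pi^\bfB_a(\theta_a(f)) = \pi^\bfB_a(\theta_a(f))$.

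It then remains to match these two expressions, and this is exactly the content of the final sentence of Lemma~\ref{lem:RemoveTheta}, which asserts $\pi^\bfB_a(\theta_a(f)) = Q(\theta)\circ\pi^\bfA_a(f)$. Thus both sides agree and the square commutes. The main point is that no new computation is required here: all of the analytic and diagrammatic work is already packaged into Lemma~\ref{lem:RemoveTheta} (and, behind it, Lemma~\ref{lem:RemovePFromAMorphisms}), so the only care needed is the bookkeeping that identifies $\WStar Q(\bfA)(a)$ with the subspace $\pi^\bfA_a(\bfA(a)) = \cC(a,\im(p_\bfA))$ and reconciles the definition of $\WStar$ on morphisms with post-composition by $Q(\theta)$. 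I expect this identification to be the only possible source of friction, and it is routine.
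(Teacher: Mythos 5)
Your proof is correct and follows essentially the same route as the paper: both reduce the naturality square to the single identity $\pi^\bfB_a(\theta_a(f)) = Q(\theta)\circ\pi^\bfA_a(f)$ from Lemma~\ref{lem:RemoveTheta}, after identifying $\eta^\bfA_a(f)$ with $\pi^\bfA_a(f)$ (via Lemma~\ref{lem:RemovePFromAMorphisms}) and $\WStar Q(\theta)_a$ with post-composition by $Q(\theta)$. The only cosmetic difference is that the paper displays the two sides diagrammatically with the normalizations $d_{H_\bfA}^{-1/2}$ and $d_{H_\bfB}^{-1/2}$ made explicit, while you argue in symbols; the content is identical.
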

\begin{proof}
Suppose $\theta: \bfA \Rightarrow \bfB$ is a natural $*$-algebra transformation.
We must prove that $\WStar Q(\theta) \circ \eta^\bfA = \eta^\bfB \circ \theta$.
We calculate that for $a\in \cC$ and $f\in \bfA(a)$, we have
$$
(\WStar Q(\theta)_a \circ \eta^\bfA_a)(f)
=
d_{H_\bfA}^{-1/2}
\begin{tikzpicture}[baseline=.4cm]
	\draw (0,0) -- (0,1.7);
	\draw (-.3,-.3) -- (-.3,-.7);
	\draw (.3,-.3) arc (-180:0:.3cm) -- (.9,1.7);
	\roundNbox{unshaded}{(0,0)}{.3}{.4}{.4}{$\lambda_a^\bfA(f)$}
	\roundNbox{unshaded}{(.5,1)}{.3}{.4}{.3}{$Q(\theta)$}
	\node at (-.5,-.5) {\scriptsize{$a$}};
	\node at (-.3,.5) {\scriptsize{$H_\bfA$}};
	\node at (1.2,0) {\scriptsize{$\overline{H_\bfA}$}};
	\node at (-.3,1.5) {\scriptsize{$H_\bfB$}};
	\node at (1.2,1.5) {\scriptsize{$\overline{H_\bfB}$}};
\end{tikzpicture}
\qquad\text{and}\qquad
(\eta^\bfB_a \circ \theta_a) (f)
=
d_{H_\bfB}^{-1/2}
\begin{tikzpicture}[baseline=-.1cm]
	\draw (0,0) -- (0,.7);
	\draw (-.3,-.3) -- (-.3,-.7);
	\draw (.3,-.3) arc (-180:0:.3cm) -- (.9,.7);
	\roundNbox{unshaded}{(0,0)}{.3}{.8}{.4}{$\lambda_a^\bfB(\theta_a(f))$}
	\node at (-.5,-.5) {\scriptsize{$a$}};
	\node at (-.3,.5) {\scriptsize{$H_\bfB$}};
	\node at (1.2,0) {\scriptsize{$\overline{H_\bfB}$}};
\end{tikzpicture}
.
$$
The equality of the above morphisms is exactly the identity $\pi^\bfB_a(\theta(f)) = Q(\theta)\circ \pi^\bfA_a(f)$ which was established in Lemma \ref{lem:RemoveTheta}.
\end{proof}

\subsection{From Q-systems to W*-algebras and back}
\label{sec:QtoWtoQ}

We now build a natural isomorphism $\zeta: \id \Rightarrow Q\WStar$.

Starting with a normalized irreducible Q-system $(A,m_A,i_A) \in \cC$, define the compact connected W*-algebra object $\bfA \in \Vec(\cC)$ by $\bfA(a) = \cC^{\natural}(a, A)$ as in Proposition \ref{prop:FrobeniusToW*}.
By Lemma \ref{lem:H=A}, the compact Hilbert space object $\bfL^2\bfA$ is given by $\bfL^2\bfA(a) = \cC(a, A)$, and by Lemma  \ref{lem:RemovePFromAMorphisms}, the canonical GNS faithful $*$-algebra natural transformation $\lambda: \bfA \Rightarrow \bfB(\bfL^2\bfA)$ is given by \eqref{eq:ExplicitGNS}.
By Proposition \ref{prop:W*ToFrobenius},  $Q\WStar(A) = \im(p) \subset H\otimes \overline{H}\in \cC$ is a normalized irreducible Q-system.

\begin{thm}
Under the GNS representation $\lambda: \bfA \Rightarrow \bfB(\bfL^2\bfA)$, the identity map $\id_A \in \cC(A,A)$ induces a canonical unitary involutive algebra isomorphism $\zeta_A\in \cC(A , Q\WStar(A))$.
\end{thm}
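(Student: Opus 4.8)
The plan is to take $\zeta_A := \pi^\bfA_A(\id_A)$, where $\pi^\bfA \colon \bfA \Rightarrow \bfL^2\bfA \otimes \overline{\bfL^2\bfA}$ is the induced faithful $*$-algebra natural transformation, and to recognize it, via Lemma \ref{lem:H=A} (which gives the canonical unitary $H \cong A$) and Lemma \ref{lem:RemovePFromAMorphisms} (which gives $p \circ \pi^\bfA_a(g) = \pi^\bfA_a(g)$), as a morphism $\zeta_A \in \cC(A, \im(p)) = \cC(A, Q\WStar(A))$. The identity that drives everything is naturality of $\pi^\bfA$: for any $a \in \cC$ and any $f \in \bfA(a) = \cC(a,A)$, functoriality in the argument $f \colon a \to A$ gives $\pi^\bfA_a(f) = \pi^\bfA_A(\id_A) \circ f = \zeta_A \circ f$. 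In other words, $\zeta_A$ is exactly the Yoneda morphism underlying the natural isomorphism $\eta^\bfA \colon \bfA \Rightarrow \WStar Q(\bfA)$ of Theorem \ref{thm:W*toQandBack}, evaluated at $\id_A$. Since $\eta^\bfA$ was already shown there to be a $*$-algebra natural isomorphism, my strategy is to read off each required property of $\zeta_A$ by specializing the corresponding property of $\eta^\bfA$ at $a = A$ and $f = \id_A$.

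For the algebra-morphism conditions I would specialize the multiplicativity and unitality of $\eta^\bfA$. Recalling from Section \ref{sec:QtoW} that the multiplication and unit of $\WStar Q(\bfA) = \WStar(\im p)$ are post-composition with $m_{\im p}$ and the element $i_{\im p}$, the identity $\mu^{\WStar Q(\bfA)}(\pi^\bfA_A(\id_A) \otimes \pi^\bfA_A(\id_A)) = \pi^\bfA_{A \otimes A}(\mu^\bfA(\id_A \otimes \id_A))$ becomes $m_{\im p} \circ (\zeta_A \otimes \zeta_A) = \zeta_A \circ m_A$ (using $\mu^\bfA(\id_A \otimes \id_A) = m_A$ and $\pi^\bfA_{A\otimes A}(m_A) = \zeta_A \circ m_A$), while unitality $\pi^\bfA_{1_\cC}(i_\bfA) = i_{\im p}$ yields $\zeta_A \circ i_A = i_{\im p}$. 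For involutivity I specialize the fact that $\eta^\bfA$ intertwines the $*$-structures: with $j^{\WStar Q(\bfA)}_a(\phi) = \sigma_{\im p}^{-1} \circ \overline{\phi}$ and $j^\bfA_A(\id_A) = \sigma_A^{-1}$, the relation $j^{\WStar Q(\bfA)}_A(\zeta_A) = \pi^\bfA_{\overline A}(j^\bfA_A(\id_A))$ rearranges to $\sigma_{\im p} \circ \zeta_A = \overline{\zeta_A} \circ \sigma_A$, which is precisely the involutive condition (here using $\sigma_{Q(\bfA)} = p$ as recorded in the discussion of $Q(\theta)$).

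For unitarity I would avoid any direct graphical computation and instead invoke the Corollary that every $*$-algebra natural transformation between connected W*-algebra objects induces an isometry on $\bfL^2$ spaces: since $\eta^\bfA$ is such a transformation between the connected objects $\bfA$ and $\WStar Q(\bfA)$, it induces an isometry $\bfL^2\eta^\bfA \colon \bfL^2\bfA \Rightarrow \bfL^2\WStar Q(\bfA)$, and because $\eta^\bfA$ is an isomorphism its inverse likewise induces an isometry, so $\bfL^2\eta^\bfA$ is a unitary in $\Hilb(\cC)$. Applying Lemma \ref{lem:H=A} to both $\bfA$ and to the normalized irreducible Q-system $\im p$, the compact objects $\bfL^2\bfA$ and $\bfL^2\WStar Q(\bfA)$ are represented by $A$ and $\im p$, and the component formula $\bfL^2\eta^\bfA_a(f) = \pi^\bfA_a(f) = \zeta_A \circ f$ identifies $\bfL^2\eta^\bfA$ with the image of $\zeta_A$ under the bi-involutive Yoneda embedding $\cC \hookrightarrow \Hilb(\cC)$. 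As that embedding is a dagger functor, it reflects unitarity, so $\zeta_A$ is unitary in $\cC$.

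The main obstacle I expect is bookkeeping rather than a hard estimate: correctly matching the two $*$-structures, i.e.\ confirming that the involution $\sigma_{\im p}$ of the Q-system $Q\WStar(A)$ really is the projection $p$ (up to the suppressed isomorphism identifying the real object $\im p \subset H \otimes \overline H$ with its conjugate, as in Example \ref{ex:StarAlgebraFromInnerHom}), and checking that the unitarity of $\bfL^2\eta^\bfA$ genuinely descends to $\cC$ with the canonical unitary $H \cong A$ of Lemma \ref{lem:H=A} inserted in the right places. Once these identifications are pinned down, all three conclusions follow by specialization from Theorem \ref{thm:W*toQandBack}, so no calculation beyond that theorem is required.
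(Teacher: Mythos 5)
Your proposal is correct, and it reaches the theorem by a genuinely different route than the paper does. The paper's proof is a direct graphical verification: it writes out $\zeta_A = \pi_A(\id_A)$ explicitly via \eqref{eq:ExplicitGNS}, proves $\zeta_A^*\circ\zeta_A = \id_A$ from the normalization $m_A\circ m_A^* = d_A\id_A$, gets $\zeta_A\circ\zeta_A^* = p$ by a dimension count on the subprojection $q = \zeta_A\circ\zeta_A^* \leq p$, checks multiplicativity by a diagrammatic computation using Corollary \ref{cor:RemoveAnyP} and the Frobenius relation, and checks involutivity by an explicit rotation computation showing $\overline{\zeta_A}\circ\sigma_A = \zeta_A = \sigma_{\im(p)}\circ\zeta_A$. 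You instead observe the Yoneda-style identity $\pi_a(f) = \pi_A(\id_A)\circ f = \zeta_A\circ f$ (naturality of $\pi$ in the argument $f\colon a \to A$), which exhibits $\zeta_A$ as the morphism representing $\eta^{\bfA}$ of Theorem \ref{thm:W*toQandBack} applied to $\bfA = \WStar(A)$, and then you read off unitality, multiplicativity, and involutivity by specializing that theorem at $a = A$, $f = \id_A$ --- e.g.\ multiplicativity of $\eta^{\bfA}$ at $f = g = \id_A$ gives exactly $m_{Q\WStar(A)}\circ(\zeta_A\otimes\zeta_A) = \pi_{A\otimes A}(m_A) = \zeta_A\circ m_A$, and the $*$-intertwining with $j^{\bfA}_A(\id_A) = \sigma_A^{-1}$ rearranges to $\sigma_{\im(p)}\circ\zeta_A = \overline{\zeta_A}\circ\sigma_A$. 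There is no circularity here, since Theorem \ref{thm:W*toQandBack} and the isometry corollary following \eqref{eq:Isometry} are proved independently of Section \ref{sec:QtoWtoQ}. Your unitarity argument also holds up: each component of $\bfL^2\eta^{\bfA}$ is an isometry by that corollary and is surjective (indeed $\pi_a(\bfA(a)) = \cC(a,\im(p))$, as noted in the proof of Theorem \ref{thm:W*toQandBack}), hence unitary on the finite-dimensional spaces involved --- one could even skip the ``inverse is an isometry'' step --- and Lemma \ref{lem:H=A}, applied to both the normalized irreducible Q-systems $A$ and $\im(p)$ (Proposition \ref{prop:W*ToFrobenius}), is precisely what guarantees the $L^2$ inner products coincide with the standard ones \eqref{eq:CInnerProduct}, so that the dagger Yoneda embedding $\cC\hookrightarrow\Hilb(\cC)$ reflects unitarity of $\zeta_A$. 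What your route buys is economy and conceptual clarity: no new graphical computation is needed, and $\zeta_A$ is transparently the compact shadow of $\eta^{\bfA}$. What the paper's route buys is an explicit closed form for $\zeta_A$ and its inverse $\zeta_A^* $ (with the concrete identities $\zeta_A^*\zeta_A = \id_A$, $\zeta_A\zeta_A^* = p$), which keeps Section \ref{sec:QtoWtoQ} self-contained at the Q-system level and is what the subsequent naturality-in-$A$ computation manipulates directly. The only bookkeeping points --- that $\sigma_{\im(p)} = p$ up to the suppressed identification of the real object $\im(p)$ with its conjugate, and that the identifications of Lemma \ref{lem:H=A} are inserted consistently --- are exactly the ones you flagged, and they are resolved in the paper as you anticipate.
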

\begin{proof}
Recall that $\cC(A, Q\WStar(A)) = \cC(A, \im(p)) \subset \cC(A, A\otimes \overline{A})$, and that $\cC(A, \im(p))$ is exactly the image of $\pi_A : \bfA(A) \to (\bfL^2\bfA\otimes \overline{\bfL^2\bfA})(A) = \cC(A, A\otimes \overline{A})$ by Lemma \ref{lem:RemovePFromAMorphisms}.
By Remark \ref{rem:ExplicitGNS},
$$
\lambda_A(\id_A) 
= 
\begin{tikzpicture}[baseline=-.1cm]
	\draw (.3,-.6) -- (.3,.6);
	\filldraw (.3,0) circle (.05cm);
	\draw (-.1,-.6) .. controls ++(90:.3cm) and ++(-135:.3cm) .. (.3,0);
	\node at (.5,-.5) {\scriptsize{$A$}};
	\node at (-.3,-.5) {\scriptsize{$A$}};
	\node at (.5,.5) {\scriptsize{$A$}};
\end{tikzpicture}
\in \cC(A\otimes A, A)
\qquad
\Longrightarrow
\qquad
\pi_A(\id_A) 
=
d_A^{-1/2}
\begin{tikzpicture}[baseline=-.1cm]
	\draw (.8,.6) -- (.8,-.3) arc (0:-180:.25cm) (.3,-.3) -- (.3,.6);
	\filldraw (.3,0) circle (.05cm);
	\draw (0,-.6) .. controls ++(90:.3cm) and ++(-135:.3cm) .. (.3,0);
	\node at (.5,.5) {\scriptsize{$A$}};
	\node at (-.2,-.5) {\scriptsize{$A$}};
	\node at (1.1,.5) {\scriptsize{$\overline{A}$}};
\end{tikzpicture}
=:\zeta_A
$$
Denoting $\zeta_A:= \pi_A(\id_A)$, by Lemma \ref{lem:RemovePFromAMorphisms}, $\zeta_A=p\circ \zeta_A\in \cC(A, \im(p)) = \cC(A, Q\WStar(A))$.

We claim $\zeta_A$ is an involutive unitary isomorphism of Q-systems.
First, since $(A,m_A,i_A)$ is a normalized Q-system, $m_A\circ m_A^* = d_A \id_A$.
Thus $\zeta_A^*\circ \zeta_A = \id_A$ by construction.
Now $\zeta_A = p\circ \zeta_A$ by Lemma \ref{lem:RemovePFromAMorphisms}, and it is easy to see that $\zeta_A\circ \zeta_A^* = \zeta_A\circ \zeta_A^* = p = \id_{Q\WStar(A)}$.
Indeed, $q:= \zeta_A\circ \zeta_A^*$ is a subprojection of $p$ such that $\dim(\im(q)) = \dim(A) = \dim(\im(p))$, which means $q=p$.
Thus $\zeta_A$ is a unitary isomorphism.

It remains to show that $\zeta_A$ intertwines the unit and multiplications of $A$ and $B$.
First, by Lemma \ref{lem:RemovePFromAMorphisms}, 
$$
\zeta_A\circ i_A 
=
d_A^{-1/2}
\begin{tikzpicture}[baseline=-.1cm]
	\draw (.8,.6) -- (.8,-.3) arc (0:-180:.25cm) (.3,-.3) -- (.3,.6);
	\filldraw (.3,0) circle (.05cm);
	\filldraw (0,-.6) circle (.05cm) node [left] {\scriptsize{$i_A$}};
	\draw (0,-.6) .. controls ++(90:.3cm) and ++(-135:.3cm) .. (.3,0);
	\node at (.5,.5) {\scriptsize{$A$}};
	\node at (1.1,.5) {\scriptsize{$\overline{A}$}};
\end{tikzpicture}
=
d_A^{-1/2} \coev_A
= 
d_A^{-1/2} (p\circ \coev_A)
=
i_{Q\WStar(A)}.
$$
Second, using Corollary \ref{cor:RemoveAnyP}, $p\circ \zeta_A = \zeta_A$ by Lemma \ref{lem:RemovePFromAMorphisms}, and the Frobenius relation,
$$
m_{Q\WStar(A)} \circ(\zeta_A\otimes \zeta_A)
=
\frac{d_A^{1/2}}{d_A}
\begin{tikzpicture}[baseline=-.4cm]
	\draw (-.55,-.1) .. controls ++(90:.1cm) and ++(270:.2cm) .. (-.15,.3) -- (-.15,1.2);
	\draw (.55,-.1) .. controls ++(90:.1cm) and ++(270:.2cm) .. (.15,.3) -- (.15,1.2);
	\draw (-.25,-.1) .. controls ++(90:.2cm) and ++(90:.2cm) .. (.25,-.1);
	\draw (-.25,-.7) -- (-.25,-1.1) arc (0:-180:.15cm) -- (-.55,-.7);
	\draw (.25,-.7) -- (.25,-1.1) arc (-180:0:.15cm) -- (.55,-.7);
	\filldraw (-.55,-.9) circle (.05cm);
	\filldraw (.25,-.9) circle (.05cm);
	\draw (-.8,-1.5) .. controls ++(90:.3cm) and ++(-135:.2cm) .. (-.55,-.9);
	\draw (0,-1.5) .. controls ++(90:.3cm) and ++(-135:.2cm) .. (.25,-.9);
	\roundNbox{unshaded}{(-.4,-.4)}{.3}{0}{0}{$p$};
	\roundNbox{unshaded}{(.4,-.4)}{.3}{0}{0}{$p$};
	\roundNbox{unshaded}{(0,.6)}{.3}{0}{0}{$p$};
\end{tikzpicture}
=
d_A^{-1/2}
\begin{tikzpicture}[baseline=-.4cm]
	\draw (-.55,-.1) -- (-.55,.4);
	\draw (.55,-.1) -- (.55,.4);
	\draw (-.25,-.1) .. controls ++(90:.2cm) and ++(90:.2cm) .. (.25,-.1);
	\draw (-.25,-.7) -- (-.25,-1.1) arc (0:-180:.15cm) -- (-.55,-.7);
	\draw (.25,-.7) -- (.25,-1.1) arc (-180:0:.15cm) -- (.55,-.7);
	\filldraw (-.55,-.9) circle (.05cm);
	\filldraw (.25,-.9) circle (.05cm);
	\draw (-.8,-1.5) .. controls ++(90:.3cm) and ++(-135:.2cm) .. (-.55,-.9);
	\draw (0,-1.5) .. controls ++(90:.3cm) and ++(-135:.2cm) .. (.25,-.9);
	\roundNbox{unshaded}{(-.4,-.4)}{.3}{0}{0}{$p$};
	\roundNbox{unshaded}{(.4,-.4)}{.3}{0}{0}{$p$};
\end{tikzpicture}
=
d_A^{-1/2}
\begin{tikzpicture}[baseline=-.9cm]
	\draw (-.25,-.7) .. controls ++(90:.2cm) and ++(90:.2cm) .. (.25,-.7);
	\draw (-.25,-.7) -- (-.25,-1.1) arc (0:-180:.15cm) -- (-.55,-.4);
	\draw (.25,-.7) -- (.25,-1.1) arc (-180:0:.15cm) -- (.55,-.4);
	\filldraw (-.55,-.9) circle (.05cm);
	\filldraw (.25,-.9) circle (.05cm);
	\draw (-.8,-1.5) .. controls ++(90:.3cm) and ++(-135:.2cm) .. (-.55,-.9);
	\draw (0,-1.5) .. controls ++(90:.3cm) and ++(-135:.2cm) .. (.25,-.9);
\end{tikzpicture}
=
d_A^{-1/2}
\begin{tikzpicture}[baseline=-1.2cm]
	\draw (.25,-.4) -- (.25,-1.1) arc (-180:0:.15cm) -- (.55,-.4);
	\filldraw (0,-1.5) circle (.05cm);
	\filldraw (.25,-.9) circle (.05cm);
	\draw (-.3,-1.8) arc (180:0:.3cm);
	\draw (0,-1.5) .. controls ++(90:.3cm) and ++(-135:.2cm) .. (.25,-.9);
\end{tikzpicture}
= 
\zeta_A\circ m_A.
$$
Finally, we show $\zeta_A$ is involutive, i.e., $\sigma_{\im(p)}\circ \zeta_A = \overline{\zeta_A} \circ \sigma_A$.
First, note that by Lemmas \ref{lem:SymmetricallySelfDualProjector} and \ref{lem:RemoveTopP}, $\sigma_{\im(p)} = p$ on the nose.
Using \eqref{eq:BarStar} and Lemma \ref{lem:RotationRelation} we compute
$$
\overline{\zeta_A} \circ \sigma_A
=
(\zeta_A^*)^\vee \circ \sigma_A
=
d_A^{-1/2}
\begin{tikzpicture}[baseline=-2cm]
	\draw (.65,-.8) -- (.65,-1.4) arc (0:-180:.6cm) -- (-.55,-1.2) arc (180:0:.2cm) -- (-.15,-1.4) arc (-180:0:.2cm) -- (.25,-.8);
	\draw  (-2.05,-2.5) -- (-2.05,-2) arc (180:0:.25cm) arc (-180:0:.25cm) (-1.05,-2) -- (-1.05,-1.45) arc (180:0:.25cm);
	\draw (-1.8,-1.75) -- (-1.8,-1.5);
	\filldraw (-.55,-1.4) circle (.05cm);
	\filldraw (-1.8,-1.75) circle (.05cm);
	\filldraw (-1.8,-1.5) circle (.05cm);
\end{tikzpicture}
=
d_A^{-1/2}
\begin{tikzpicture}[baseline=-1.5cm]
	\draw (0,-.8) -- (0,-1.4) arc (0:-180:.25cm) -- (-.5,-.8);
	\draw (-1,-2) -- (-1,-1.45) arc (180:0:.25cm);
	\draw (-.75,-1.2) -- (-.75,-1);
	\filldraw (-.5,-1.4) circle (.05cm);
	\filldraw (-.75,-1.2) circle (.05cm);
	\filldraw (-.75,-1) circle (.05cm);
\end{tikzpicture}
=
d_A^{-1/2}
\begin{tikzpicture}[baseline=-.1cm]
	\draw (.8,.6) -- (.8,-.3) arc (0:-180:.25cm) (.3,-.3) -- (.3,.6);
	\filldraw (.3,0) circle (.05cm);
	\draw (0,-.6) .. controls ++(90:.3cm) and ++(-135:.3cm) .. (.3,0);
\end{tikzpicture}
=
\zeta_A.
$$
Since $\sigma_{\im(p)}\circ \zeta_A = p\circ \zeta_A = \zeta_A$ by Lemma \ref{lem:RemovePFromAMorphisms}, we are finished.
\end{proof}

\begin{prop}
The involutive algebra isomorphism $\zeta_A \in \cC(A, Q\WStar(A))$ is natural in $A$.
\end{prop}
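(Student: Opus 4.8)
The plan is to verify the naturality square directly: for every involutive algebra morphism $\theta\in\cC(A,B)$ between normalized irreducible Q-systems I must show that $Q\WStar(\theta)\circ\zeta_A=\zeta_B\circ\theta$ as morphisms in $\cC(A,Q\WStar(B))$. The key observation is that $\zeta_A$ is by construction nothing but $\pi^\bfA_A(\id_A)$, where $\pi^\bfA:\bfA\Rightarrow\bfL^2\bfA\otimes\overline{\bfL^2\bfA}$ is the induced faithful $*$-algebra natural transformation of Definition \ref{defn:InducedRepresentation}, and likewise $\zeta_B=\pi^\bfB_B(\id_B)$. Writing $\Theta:=\WStar(\theta):\bfA\Rightarrow\bfB$ for the associated $*$-algebra natural transformation, naturality will follow by combining two facts already in hand: Lemma \ref{lem:RemoveTheta} (which relates $Q(\Theta)$, $\pi^\bfA$, and $\pi^\bfB$) and the naturality of $\pi^\bfB$ as a transformation of functors $(\cC^\natural)^{\op}\to\Vec$.

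Concretely, I would first compute the left-hand side. Since $Q\WStar(\theta)=Q(\Theta)$ and $\zeta_A=\pi^\bfA_A(\id_A)$, Lemma \ref{lem:RemoveTheta} applied to $\Theta$ with $a=A$ and $f=\id_A\in\bfA(A)$ gives $Q(\Theta)\circ\pi^\bfA_A(\id_A)=\pi^\bfB_A(\Theta_A(\id_A))$. By the definition of $\WStar(\theta)$ we have $\Theta_A(\id_A)=\theta\circ\id_A=\theta\in\bfB(A)=\cC^\natural(A,B)$, and hence $Q\WStar(\theta)\circ\zeta_A=\pi^\bfB_A(\theta)$.

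Next I would compute the right-hand side using that $\pi^\bfB$ is a natural transformation. Regarding $\theta$ as a morphism $A\to B$ in $\cC^\natural$, the naturality square of $\pi^\bfB$ against $\theta$, evaluated at $\id_B\in\bfB(B)$, reads $\pi^\bfB_A(\id_B\circ\theta)=\pi^\bfB_B(\id_B)\circ\theta$, i.e.\ $\pi^\bfB_A(\theta)=\zeta_B\circ\theta$. Comparing the two computations yields $Q\WStar(\theta)\circ\zeta_A=\pi^\bfB_A(\theta)=\zeta_B\circ\theta$, which is exactly the desired naturality.

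The substantive content is entirely contained in Lemma \ref{lem:RemoveTheta}; the remaining steps are formal. The only point requiring care is bookkeeping: one must keep the original morphism $\theta\in\cC(A,B)$ distinct from the functor-level transformation $\Theta=\WStar(\theta)$, correctly identify $\Theta_A(\id_A)$ with $\theta$ viewed as an element of $\bfB(A)$, and respect the contravariance of the Yoneda functors $\bfA,\bfB$ so that the naturality of $\pi^\bfB$ is invoked as precomposition by $\theta$ rather than postcomposition. Once these identifications are made, no further calculation is needed.
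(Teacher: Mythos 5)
Your proof is correct, but it takes a genuinely different route from the paper's. You reduce everything to Lemma \ref{lem:RemoveTheta}: writing $\Theta = \WStar(\theta)$, you note $\zeta_A = \pi^\bfA_A(\id_A)$, apply the lemma's identity $\pi^\bfB_a(\Theta_a(f)) = Q(\Theta)\circ \pi^\bfA_a(f)$ at $a = A$, $f = \id_A$ to get $Q\WStar(\theta)\circ \zeta_A = \pi^\bfB_A(\theta)$, and then use naturality of $\pi^\bfB$ in $\Vec(\cC)$ (precomposition by $\theta \in \cC^\natural(A,B)$, evaluated at $\id_B$) to identify this with $\zeta_B\circ\theta$. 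All the hypotheses you need are in place: $\WStar(A)$ and $\WStar(B)$ are compact connected W*-algebra objects by Proposition \ref{prop:FrobeniusToW*}, $\Theta$ is a $*$-algebra natural transformation so Lemma \ref{lem:RemoveTheta} applies, and the identification $H_{\WStar(A)} \cong A$ of Lemma \ref{lem:H=A} is the same one the paper uses throughout Section \ref{sec:QtoWtoQ}. The paper instead computes $Q\WStar(\theta)$ explicitly: it expands $\theta(\alpha)$ for $\alpha \in \cB_a^{\WStar(A)}$ into a sum over $c\in\Irr(\cC)$ and $f\in\ONB(\bfA(c))$ of diagrams built from $f$, $f^*$, and $\theta$, collapses the sum using $\sqrt{d_c}\,\ONB(c,A) = \Isom(c,A)$ and the fusion resolution of the identity to obtain a closed diagrammatic formula for $Q\WStar(\theta)$ (a single $\theta$ box dressed with Frobenius cups and caps, weighted by $d_A^{-1/2}d_B^{-1/2}$), and then composes with $\zeta_A$ and invokes normalization of $A$. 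Your argument is shorter and avoids all basis bookkeeping; in effect you have noticed that the same mechanism the paper uses for naturality of $\eta^\bfA$ in Section \ref{sec:WtoQtoW} --- namely Lemma \ref{lem:RemoveTheta} plus Yoneda-style naturality --- also settles naturality of $\zeta$. What the paper's computation buys in exchange is an explicit closed-form expression for $Q\WStar(\theta)$ as a morphism $\im(p_\bfA)\to\im(p_\bfB)$, which makes the final equality with $\zeta_B\circ\theta$ visible by inspection and is of some independent diagrammatic interest; your route leaves $Q\WStar(\theta)$ implicit.
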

\begin{proof}
Suppose $\theta \in \cC(A, B)$ is an involutive algebra morphism.
We must prove that $Q\WStar(\theta) \circ \zeta_A = \zeta_B \circ \theta$.
Expanding the definition of $\theta(\alpha)$ for $\alpha \in \cB^{\WStar(A)}_a$, one calculates that
$$
Q\WStar(\theta)
=
d_{A}^{-1/2}d_B^{-1/2}
\sum_{c\in \Irr(\cC)}
\sum_{f\in \ONB(\bfA(a))}
d_c
\begin{tikzpicture}[baseline=-.1cm]
	\draw (0,-1) -- (0,1);
	\draw (0,1.3) .. controls ++(90:.3cm) and ++(-135:.3cm) .. (.3,2);
	\draw (0,-1.3) .. controls ++(270:.3cm) and ++(135:.3cm) .. (.3,-2);
	\draw (.3,2.5) -- (.3,2) arc (-180:0:.25cm) -- (.8,2.5);
	\draw (.3,-2.5) -- (.3,-2) arc (180:0:.25cm) -- (.8,-2.5);
	\filldraw (.3,2) circle (.05cm);
	\filldraw (.3,-2) circle (.05cm);
	\roundNbox{unshaded}{(0,1)}{.3}{0}{0}{$\theta$}
	\roundNbox{unshaded}{(0,-0)}{.3}{0}{0}{$f$}
	\roundNbox{unshaded}{(0,-1)}{.3}{0}{0}{$f^*$}
	\node at (.5,2.4) {\scriptsize{$B$}};
	\node at (1,2.4) {\scriptsize{$\overline{B}$}};
	\node at (-.2,1.5) {\scriptsize{$B$}};
	\node at (-.2,.5) {\scriptsize{$A$}};
	\node at (-.2,-.5) {\scriptsize{$c$}};
	\node at (-.2,-1.5) {\scriptsize{$A$}};
	\node at (.5,-2.4) {\scriptsize{$A$}};
	\node at (1,-2.4) {\scriptsize{$\overline{A}$}};
\end{tikzpicture}
=
d_{A}^{-1/2}d_B^{-1/2}
\begin{tikzpicture}[baseline=-.1cm]
	\draw (0,.3) .. controls ++(90:.3cm) and ++(-135:.3cm) .. (.3,1);
	\draw (0,-.3) .. controls ++(270:.3cm) and ++(135:.3cm) .. (.3,-1);
	\draw (.3,1.5) -- (.3,1) arc (-180:0:.25cm) -- (.8,1.5);
	\draw (.3,-1.5) -- (.3,-1) arc (180:0:.25cm) -- (.8,-1.5);
	\filldraw (.3,1) circle (.05cm);
	\filldraw (.3,-1) circle (.05cm);
	\roundNbox{unshaded}{(0,0)}{.3}{0}{0}{$\theta$}
	\node at (.5,1.4) {\scriptsize{$B$}};
	\node at (1,1.4) {\scriptsize{$\overline{B}$}};
	\node at (-.2,.5) {\scriptsize{$B$}};
	\node at (-.2,-.5) {\scriptsize{$A$}};
	\node at (.5,-1.4) {\scriptsize{$A$}};
	\node at (1,-1.4) {\scriptsize{$\overline{A}$}};
\end{tikzpicture},
$$
where in the second equality, we used that $\sqrt{d_c}\ONB(\bfA(c)) = \sqrt{d_c}\ONB(c,A) = \Isom(c, A)$.
Finally, since $A$ is normalized, we obtain
\begin{equation*}
Q\WStar(\theta)\circ \zeta_A
=
d_B^{-1/2}
\begin{tikzpicture}[baseline=-.4cm]
	\draw (.8,.6) -- (.8,-.3) arc (0:-180:.25cm) (.3,-.3) -- (.3,.6);
	\filldraw (.3,0) circle (.05cm);
	\draw (0,-1.5) -- (0,-.6) .. controls ++(90:.3cm) and ++(-135:.3cm) .. (.3,0);
	\roundNbox{unshaded}{(0,-.9)}{.3}{0}{0}{$\theta$}
	\node at (.5,.5) {\scriptsize{$B$}};
	\node at (1.1,.5) {\scriptsize{$\overline{B}$}};
	\node at (-.2,-.4) {\scriptsize{$B$}};
	\node at (-.2,-1.4) {\scriptsize{$A$}};
\end{tikzpicture}
=
\zeta_B \circ \theta.
\qedhere
\end{equation*}
\end{proof}

\bibliographystyle{amsalpha}
{\footnotesize{
\bibliography{../../../Documents/research/penneys/bibliography}
}}
\end{document}